\documentclass[review,10pt]{elsarticle} 


\usepackage[utf8]{inputenc}
\usepackage{hyperref}
\usepackage{amssymb, amsmath, amsthm}
\usepackage{amsfonts} 
\usepackage{color}
\usepackage{mathrsfs}
\usepackage{comment}
\usepackage{dcolumn}
\usepackage{bm} 
\usepackage[hang,small,bf]{caption}
\usepackage[subrefformat=parens]{subcaption}
\usepackage{algorithmic}
\usepackage{algorithm}
\captionsetup{compatibility=false}
\numberwithin{equation}{section}
\newcommand{\e}{\varepsilon}
\renewcommand{\d}{\mathrm{d}}
\newcommand{\R}{\mathbb R}
\newcommand{\N}{\mathbb N}
\newcommand{\dv}{{\rm{div}}}

\newtheorem{thm}{Theorem}[section]
\newtheorem{lem}[thm]{Lemma}
\newtheorem{rmk}[thm]{Remark}
\newtheorem{prop}[thm]{Proposition}
\newtheorem{defi}[thm]{Definition}

\newtheorem{ex}[thm]{Example}

\setlength{\topmargin}{-15mm}
\setlength{\textwidth}{160mm}
\setlength{\textheight}{230mm}
\setlength{\oddsidemargin}{0mm}
\setlength{\evensidemargin}{0mm}
\parskip2mm

\newcommand{\Hm}[1]{\leavevmode{\marginpar{\tiny%
$\hbox to 0mm{\hspace*{-0.5mm}$\leftarrow$\hss}%
\vcenter{\vrule depth 0.1mm height 0.1mm width \the\marginparwidth}%
\hbox to 0mm{\hss$\rightarrow$\hspace*{-0.5mm}}$\\\relax\raggedright
#1}}}



\begin{document}
\begin{frontmatter}

\title{Optimal design problem with thermal radiation \tnoteref{mytitlenote}
}

\tnotetext[mytitlenote]{
K.K.~is partially supported by JSPS KAKENHI Grant Numbers JP24K16955 and JP24KJ0010.
K.M.~is partially supported by Mizuho Foundation for the Promotion of Sciences and JSPS KAKENHI Grant Numbers JP24K17191 and JP23H03413.
T.O.~is partially supported by JSPS KAKENHI Grant Numbers
JP22K20331 and JP23K12997.
This work is also supported by the Research Institute for Mathematical Sciences, an International Joint Usage/Research Center located in Kyoto University.
}
\author[mainaddress]{Kosuke Kita}
\ead{kosuke.kita.c3@tohoku.ac.jp}
\author[mymainaddress]{Kei Matsushima}
\ead{matsushima@mech.t.u-tokyo.ac.jp}
\author[mymainaddress2]{Tomoyuki Oka\corref{mycorrespondingauthor}}
\cortext[mycorrespondingauthor]{Corresponding author}
\ead{t-oka@fit.ac.jp}
\address[mainaddress]{Graduate School of Science, Tohoku University, Aoba-ku, Sendai 980-8578, Japan}
\address[mymainaddress]{Graduate School of Engineering, The University of Tokyo, Bunkyo-ku, Tokyo 113-8656,Japan}
\address[mymainaddress2]{Faculty of Engineering, Fukuoka Institute of Technology, Higashi-ku, Fukuoka 811-0295, Japan
}

\begin{keyword} 
Optimal design problem, 
Topology optimization, 
Nonlinear boundary condition, 
Thermal radiation, 
Homogenization, 
Level set method
\MSC[2020] {\emph{Primary}: 80M50; \emph{Secondary}: 35J65, 80M40}
\end{keyword}


\begin{abstract}
This paper is concerned with configurations of two-material thermal conductors that minimize the Dirichlet energy for steady-state diffusion equations with nonlinear boundary conditions described mainly by maximal monotone operators. 
To find such configurations, a homogenization theorem will be proved and applied to an existence theorem for minimizers of a relaxation problem whose minimum value is equivalent to an original design problem. 
As a typical example of nonlinear boundary conditions, thermal radiation boundary conditions will be the focus, and then the Fr\'echet derivative of the Dirichlet energy will be derived, which is used to estimate the minimum value.
Since optimal configurations of the relaxation problem involve the so-called grayscale domains that do not make sense in general, a perimeter constraint problem via the positive part of the level set function will be introduced as an approximation problem to avoid such domains, and moreover, the existence theorem for minimizers of the perimeter constraint problem will be proved.
In particular, it will also be proved that 
the limit of minimizers for the approximation problem becomes that of the relaxation problem in a specific case, and then candidates for minimizers of the approximation problem will be constructed by employing time-discrete versions of nonlinear diffusion equations. 
In this paper, it will be shown that optimized configurations deeply depend on force terms 
as a characteristic of nonlinear problems and will also be applied to real physical problems.
\end{abstract}

\end{frontmatter}

\section{Introduction} 
Transfer of thermal energy can be classified into conduction, convection, and radiation. Generally, the first two are usually treated as linear Partial Differential Equations (PDEs)  as long as thermal conductivity, specific heat and heat source are independent of temperature. In the case of the steady-state, many problems determining distributions of composite materials such that energies (or objective functionals) are minimized have been studied in various fields such as mathematics, physics, engineering and computer science as \emph{optimal design problem} (or \emph{shape and topology optimization problem}).
Material distributions of two-material composites are represented by employing characteristic functions, and therefore, the optimal design problem is described by a minimization problem of functionals with respect to the characteristic function. 
To guarantee the existence of minimizers, \emph{homogenization theory} plays a crucial role since the minimizing sequence of the characteristic functions usually oscillates (see, e.g.,~\cite{Murat-Tartar}).
Although there are no minimizers in general, it has been proved that there exists a pair of volume fractions (or densities) and homogenized coefficients that achieve the minimum value (see, e.g.,~\cite{A02,DLM23}). 
Hence it is a crucial issue to construct volume fractions that are similar to characteristic functions and give values close to the minimum, and therefore, various numerical techniques that do not involve intermediate sets have been devised (see e.g.,~\cite{AJT02,AJT04,AA06,Td2, LZ11}).
As for the unsteady-state, the existence of minimizers for a true relaxation problem has been proved in \cite{MPP08} along with the result in \cite{BFF92} and clarified the long-time behavior of optimal configurations concerning volume fraction in \cite{AMP10}. 

On the other hand, thermal radiation depends on temperature due to Stefan–Boltzmann law (cf.~\cite{LV89}) and is stated as nonlinear PDEs (in particular, it is formulated by
PDEs with nonlinear boundary conditions). From a physical point of view, the Stefan--Boltzmann law implies that the intensity of electromagnetic radiation from a black body is nonlinearly dependent on its surface temperature. Thermal radiation is especially important in high-temperature environments or thermally isolated systems in terms of conduction and convection, e.g.,~satellites and spacecrafts in outer space \cite{gilmore2002spacecraft}. 

Such potential applications motivate us to develop an optimal design method for nonlinear PDEs. As shape and topology optimization is widely used for enhancing static and dynamic properties of solid deformation, much effort has been devoted to shape and topology optimization for nonlinear elasticity, e.g., large deformation \cite{jog1996distributed}, hyper-elasticity \cite{bruns2001topology}, and elasto-plasticity \cite{yuge1995optimization}. Another important subject is the fluid dynamics with the aim of designing, e.g., aircraft wings \cite{maute2004conceptual}, microfluidic systems \cite{chen2016topology}, and heat-dissipating structures with natural convection \cite{yoon2010topological}. See also \cite{KS86,JCD22,HMP21,ZHW18} for optimal design or shape optimization problems of other nonlinear PDEs.

    Some earlier studies have presented shape-optimized designs of heat radiators. For example, Dems and Korycki calculated a shape derivative associated with steady radiative heat transfer and optimized some parameters representing the boundary of a heat radiator \cite{dems2005sensitivity}. Transient heat transfer with radiation was also studied in \cite{korycki2006sensitivity}. Recent work by Liu and Hasegawa developed a level set-based method for shape optimization in radiative heat transfer \cite{liu2023adjoint}. However, rigorous mathematical theory has not yet been established for optimal design problems with thermal radiation, which is crucial to constructing a unified theory that involves radiation as a natural extension of linear problems such as conduction and convection.

\subsection{Setting of the problem}
In this paper, for a given composite material consisting of two materials with different diffusion coefficients, we consider the material distribution that minimizes the Dirichlet energy described by the solution to the steady-state diffusion equation with the maximal monotone operator to handle various thermal transfers in a unified way.
Let $\Omega$ be a bounded domain of $\R^d$ with 
Lipschitz 
boundary $\partial \Omega$, $d\ge 1$ and
$\Omega_i\subset \Omega$ ($i=0,1$) be such that $\Omega=\Omega_0\cap\Omega_1$ and $\Omega_0\cap\Omega_1=\emptyset$.
For $\alpha,\beta>0$ such that $\beta>\alpha>0$ and $i,j\in\{1,\ldots,d\}$, the class of diffusion coefficients is defined by 
\begin{align*}
\mathcal{M}(\alpha,\beta):=\{A\in [L^{\infty}(\Omega)]^{d\times d}\colon A_{ij}=A_{ji} \text{ and } 
    \alpha|\xi|^2\le A(x)\xi\cdot\xi\le \beta|\xi|^2 \text{ for all $\xi\in \R^d$ and a.e.~$x\in \Omega$} \}.    
\end{align*}
Let $u_{\chi_{\Omega_1}}\in K$ be a weak solution to 
\begin{align}\label{eq:gE}
\begin{cases}
-\dv(A_{\chi_{\Omega_1}} \nabla u_{\chi_{\Omega_1}})=f\quad &\text{ in } \Omega,\\
-A_{\chi_{\Omega_1}} \nabla u_{\chi_{\Omega_1}}\cdot \nu
\in \boldsymbol{\beta}(u_{\chi_{\Omega_1}})
\quad &\text{ on } \partial \Omega,
\end{cases}
\end{align}
where $\nu$ is the unit outward normal vector on $\partial \Omega$, $f\in(H^1(\Omega))^\ast $, $A_{\chi_{\Omega_1}}\in \mathcal{M}(\alpha,\beta)$ is the matrix field depending on the characteristic function $\chi_{\Omega_1}\in L^{\infty}(\Omega;\{0,1\})$ given as
$$
\chi_{\Omega_1}(x)=
    \begin{cases}
        1,\quad &x\in\Omega_1, \\
        0,\quad &x\in \Omega_0,  
    \end{cases}
$$
$K$ is some closed convex subset of a Hilbert space (explained later), and $\boldsymbol{\beta}$ is a (possibly multi-valued) maximal monotone graph in $\R\times\R$ such that $\boldsymbol{\beta}(0)\ni 0$. Thus $\boldsymbol{\beta}$ satisfies 
\begin{itemize}
\item[{\rm(i)}] {\rm(Monotonicity)}
Let $G(\boldsymbol{\mathcal{\beta}})$ be the graph of $\boldsymbol{\mathcal{\beta}}$, i.e.,~
$G(\boldsymbol{\mathcal{\beta}})=\{[z,\xi]\in \R\times \R\colon\xi\in \boldsymbol{\mathcal{\beta}}(z)\}$. Then it holds that 
\[
 \langle z-w,\xi-\zeta\rangle\ge0\quad \text{ for all }\ [z,\xi],\ [w,\zeta]\in G(\boldsymbol{\mathcal{\beta}}).
\]    
\item [{\rm(ii)}] {\rm(Maximality)}
Any monotone graph
$F:\R\to 2^{\R}$
whose graph $G(F)$ involves $G(\boldsymbol{\mathcal{\beta}})$ coincides with $\boldsymbol{\mathcal{\beta}}$.
\end{itemize}
Since \(\boldsymbol{\beta}\) is a maximal monotone graph on \(\mathbb{R}\times\mathbb{R}\), 
there exists a proper, convex and lower semicontinuous function \(j:\mathbb{R}\to (-\infty,+\infty]\) such that \(\boldsymbol{\beta}=\partial j\), where \(\partial j\) denotes a subdifferential of \(j\), i.e.,~for \(w\in\mathbb{R}\),
\begin{equation*}
    \partial j(w):=\{ \xi\in \mathbb{R}\colon j(z)-j(w)\ge \xi(z-w)\ \text{ for all } z\in \mathbb{R}\}.
\end{equation*}

Now, our target minimization problem is formulated as follows{\rm \/:}
\begin{align}\label{eq:OP}
\inf_{\chi_{\Omega_1}\in \mathcal{CD}} \left\{\mathcal{E}(\chi_{\Omega_1}):=\int_\Omega A_{\chi_{\Omega_1}}(x) \nabla u_{\chi_{\Omega_1}}(x)\cdot \nabla u_{\chi_{\Omega_1}}(x)\, \d x\right\},
\end{align}
where $\mathcal{CD}$ denotes the {\it classical design domain} given as 
$$
\mathcal{CD}=\{\chi_{\Omega_1}\in L^{\infty}(\Omega;\{0,1\})\colon \|\chi_{\Omega_1}\|_{L^1(\Omega)}= \gamma |\Omega|\},
$$
$\gamma\in (0,1)$ denotes the volume ratio, and $|\Omega|$ describes the Lebesgue measure of $\Omega\subset \R^d$.

\begin{ex}[Two-material composite with different isotropic diffusion coefficients]\label{ex:lin}
\rm
Typically, 
\begin{align}\label{eq:kappa}
\kappa[\chi_{\Omega_1}]:=\alpha(1-\chi_{\Omega_1})+\beta\chi_{\Omega_1}  
\end{align}
stands for the diffusion coefficient of a composite material consisting of $\Omega_0$ with diffusion coefficient $\alpha>0$ and $\Omega_1$ with diffusion coefficient $\beta>0$, and $\kappa[\chi_{\Omega_1}]\mathbb{I}$ belongs to $\mathcal{M}(\alpha,\beta)$. Here $\mathbb{I}$ is the identity matrix of $\R^{d\times d}$.
\end{ex}

\begin{ex}[Maximal monotone operator] \label{ex:nonlin}
\rm
By noting that every subdifferential operator is maximal monotone (see, e.g.,~\cite[Theorem 2.8]{B10}), the following thermal radiation boundary condition can be taken\/{\rm :}
    \begin{equation}\label{trbc}
    \boldsymbol{\beta}(u_{\chi_{\Omega_1}})=\boldsymbol{\sigma}|u_{\chi_{\Omega_1}}|^{d}u_{\chi_{\Omega_1}},     
    \end{equation}
    where $\boldsymbol{\sigma}>0$ stands for the Stefan--Boltzmann coefficient. 
    In particular, one can represent standard linear boundary conditions with maximal monotone graph \(\boldsymbol{\beta}\).
    Indeed, if \(\boldsymbol{\beta}(w)\equiv 0\) (resp.~\(\boldsymbol{\beta}(w)=aw\) for some \(a>0\)), then the boundary condition of \eqref{trbc} is nothing but the homogeneous Neumann boundary condition (resp.~Robin boundary conditions).
    Moreover, setting
    \begin{equation*}
        \boldsymbol{\beta}_{\rm D} (w)=
        \left\{
        \begin{aligned}
            & \mathbb{R}, &&\qquad w=0,\\
            & \emptyset, &&\qquad w\neq 0,
        \end{aligned}
        \right.
    \end{equation*}
    we can understand the boundary condition of \eqref{eq:gE} as the homogeneous Dirichlet boundary condition with \(\boldsymbol{\beta}=\boldsymbol{\beta}_{\rm D}\) (for details see \cite{B71}).
    
    Hence describing the boundary conditions using the maximal monotone graph enables us to treat conduction, convection and radiation in a unified manner.
\end{ex}

\subsection{Aims and plan of the paper}
This paper aims to prove that there exist minimizers of a true relaxation problem for \eqref{eq:OP} and
construct an approximated characteristic function concerning $\chi_{\Omega_1}\in L^{\infty}(\Omega;\{0,1\})$ that achieves \eqref{eq:OP} with \eqref{eq:kappa} and \eqref{trbc} numerically as typical examples of optimal design problems with nonlinear boundary conditions.
To this end, we shall first discuss the existence of minimizers of \eqref{eq:OP} by using the direct method. 
Thus we shall take a minimizing sequence $(\chi_{\Omega_1}^n)$ in $\mathcal{CD}$, i.e.,
\begin{align}
    \lim_{n\to +\infty}\mathcal{E}(\chi_{\Omega_1}^n)= \inf_{\chi_{\Omega_1}\in \mathcal{CD}} \mathcal{E}(\chi_{\Omega_1}).
\label{eq: minseq}
\end{align}
Since $\mathcal{CD}$ involves functions that oscillate rapidly, we shall see that the minimization problem \eqref{eq:OP} is ill-posed in general due to $\chi_{\Omega_1}^{n_k}\to \theta$ weakly-$\ast$ in $L^{\infty}(\Omega)$ but $\theta\notin \mathcal{CD}$ for some subsequence $(n_k)$ and volume fraction $\theta\in L^{\infty}(\Omega;[0,1])$.
Hence showing that optimal volume fractions exist, we shall next consider how to numerically construct the optimal volume fraction $\theta\in L^{\infty}(\Omega;[0,1])$ with few intermediate set $[0<\theta<1]:=\{x\in \Omega\colon 0<\theta(x)<1\}$.  
In this paper, we shall employ the method devised in \cite{O23} as one way to resolve the so-called \emph{grayscale problem}, and
in particular, it will reveal numerically how the linearity and nonlinearity of the boundary conditions affect the material distribution. 
Furthermore, we shall present some applications of the proposed optimization algorithm and show that optimal design is superior to some physically reasonable but intuitive structures.

This paper is composed of seven sections.
Before discussing the limit of \eqref{eq: minseq}, the next section is devoted to proving the well-posedness of \eqref{eq:gE} and a corresponding homogenization theorem. 
Section \ref{S:relax} deals with the existence theorem for minimizers of a relaxation problem with respect to the volume fraction.
In particular, we shall prove that the minimum value of the relaxation problem coincides with that of the original design problem \eqref{eq:OP} and discuss how to construct the volume fraction that achieves the minimum value numerically. In section \ref{S:LSM}, we shall consider a perimeter constraint problem via the positive part of the level set function as an approximate problem for the relaxation problem. In particular, the existence theorem for minimizers of the approximation problem will be proved, and then the energy convergence of minimizers with respect to the perturbation parameter ($\e>0$) will also be shown. Furthermore, we shall provide a numerical algorithm that does not raise the grayscale problem. 
Section 5 details the validity of the numerical calculations and the effect of the nonlinearity of boundary conditions for optimized configurations, and then numerical examples for thermal radiation problems as more physical settings will be presented in Section 6.
The final section concludes this paper. 

\section{Homogenization problem for \eqref{eq:gE}}
As a preliminary to discuss the existence of minimizers in \eqref{eq:OP}, we consider the homogenization problem for \eqref{eq:gE} with $A_{\chi_{\Omega_1}}$ being replaced by $A^n$.
Here and henceforth, we only consider the single-valued case of $\boldsymbol{\beta}$ for simplicity. To define a weak solution to \eqref{eq:gE},
we first prepare some notations.
For simplicity, we set \(V=H^1(\Omega)\), $A_{\chi_{\Omega_1}}=A$ and $u_{\chi_{\Omega_1}}=u$.
Let a closed convex subset $K\subset V$ be given by 
$K=\{v\in V\colon  j(v)\in L^1(\partial \Omega)\}$, and \(\langle\cdot,\cdot\rangle\) denotes the \(V^*\)-\(V\) dual coupling.
We define some functionals
\begin{align*}
& a(u,v):= \int_{\Omega} A(x)\nabla u(x)\cdot \nabla v(x)\, \d x &&\text{ for all }\ u,v\in V,\\
\nonumber
& J(u):= \int_{\partial \Omega} j(u)(x)\, \d\sigma \quad &&\text{ for all }\ u\in K.
\end{align*}
In addition, we assume that there exist $\delta>0$ and $C_0\in\mathbb{R}$ such that
\begin{equation}
\label{hyp:J}
J(v) \ge \delta \|v\|_{L^2(\partial \Omega)}^2 - C_0 \qquad \text{ for all }\ v\in K.
\end{equation}
The typical example of $j:\R\to (-\infty,\infty]$ satisfying \eqref{hyp:J} is $j(w)=\tfrac{1}{r}|w|^{r}$ (i.e.,~$\boldsymbol{\beta}(w)=|w|^{r-2}w$) for $r>1$.
We first touch on an inequality, which plays an important role in dealing with nonlinear boundary conditions.
\begin{lem}
\label{poincare}
    Let \(\Omega\subset \mathbb{R}^d\) be a bounded Lipschitz domain. Then
    there exists a constant \(C>0\) such that 
    \begin{equation}
        \|v\|_{L^2(\Omega)}^2 \le C \left( \|\nabla v \|_{L^2(\Omega)}^2 + \|v\|_{L^2(\partial\Omega)}^2 \right)
        \label{eq:pineq}
    \end{equation}
    for all \(v\in H^1(\Omega)\). As a consequence, \((\|\nabla v \|_{L^2(\Omega)}^2 + \|v\|_{L^2(\partial\Omega)}^2)^{1/2}\) is a equivalent norm in \(H^1(\Omega)\).
\end{lem}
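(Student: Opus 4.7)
The plan is to prove the inequality \eqref{eq:pineq} by contradiction using the compactness of the Rellich--Kondrachov embedding together with the continuity of the trace operator, and then to deduce the norm equivalence as an immediate corollary.

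First, I would suppose toward a contradiction that \eqref{eq:pineq} fails for every constant $C>0$. Then there exists a sequence $(v_n)\subset H^1(\Omega)$ such that
\begin{equation*}
\|v_n\|_{L^2(\Omega)}^2 > n\left(\|\nabla v_n\|_{L^2(\Omega)}^2+\|v_n\|_{L^2(\partial\Omega)}^2\right).
\end{equation*}
Rescaling by $w_n := v_n/\|v_n\|_{L^2(\Omega)}$, we obtain $\|w_n\|_{L^2(\Omega)}=1$ and $\|\nabla w_n\|_{L^2(\Omega)}^2+\|w_n\|_{L^2(\partial\Omega)}^2 \to 0$ as $n\to+\infty$. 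In particular, $(w_n)$ is bounded in $H^1(\Omega)$.

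Next, by the Rellich--Kondrachov theorem (valid for bounded Lipschitz domains), after extraction of a subsequence, $w_n\to w$ strongly in $L^2(\Omega)$ and weakly in $H^1(\Omega)$ for some $w\in H^1(\Omega)$. Since $\nabla w_n \to 0$ in $L^2(\Omega;\mathbb{R}^d)$, the weak lower semicontinuity of the gradient norm forces $\nabla w =0$ a.e.~in $\Omega$, so $w$ is constant on each connected component of $\Omega$. Moreover, by continuity of the trace operator $\gamma : H^1(\Omega)\to L^2(\partial\Omega)$, the weak convergence $w_n\rightharpoonup w$ in $H^1(\Omega)$ yields $\gamma w_n \rightharpoonup \gamma w$ in $L^2(\partial\Omega)$; combined with $\|\gamma w_n\|_{L^2(\partial\Omega)}\to 0$, we conclude $\gamma w =0$. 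As $w$ is constant on each connected component of $\Omega$ (each of which has nonempty Lipschitz boundary), vanishing of its trace implies $w\equiv 0$, contradicting $\|w\|_{L^2(\Omega)} = \lim_n \|w_n\|_{L^2(\Omega)}=1$. This proves \eqref{eq:pineq}.

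Finally, for the equivalence of norms, the estimate
\begin{equation*}
\|\nabla v\|_{L^2(\Omega)}^2 + \|v\|_{L^2(\partial\Omega)}^2 \le \|\nabla v\|_{L^2(\Omega)}^2 + C_{\rm tr}\|v\|_{H^1(\Omega)}^2 \le (1+C_{\rm tr})\|v\|_{H^1(\Omega)}^2
\end{equation*}
follows from the continuity of the trace (with constant $C_{\rm tr}>0$), while the converse inequality $\|v\|_{H^1(\Omega)}^2 \le (1+C)(\|\nabla v\|_{L^2(\Omega)}^2+\|v\|_{L^2(\partial\Omega)}^2)$ is a direct consequence of \eqref{eq:pineq}. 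The main subtlety, and the only nontrivial step, lies in the contradiction argument: one must ensure that $\Omega$ being Lipschitz provides both the compact embedding $H^1(\Omega)\hookrightarrow\hookrightarrow L^2(\Omega)$ and the bounded trace $H^1(\Omega)\to L^2(\partial\Omega)$, and that each connected component of $\Omega$ has nontrivial boundary so that the constant $w$ is forced to vanish.
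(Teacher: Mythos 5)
Your proof is correct and follows essentially the same route as the paper: a contradiction argument via normalization, weak $H^1$-compactness, lower semicontinuity of the gradient norm, and vanishing of the trace forcing the limit to be zero. The only (harmless) differences are that you use weak convergence of traces rather than the compactness of the trace operator, and you treat the case of several connected components explicitly, whereas the paper implicitly takes $\Omega$ connected.
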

\begin{proof}
    Suppose that for any \(n\in\mathbb{N}\) there exist \((v_n)\subset H^1(\Omega)\) such that 
    \begin{equation*}
        \|v_n\|_{L^2(\Omega)}^2 > n \left( \|\nabla v_n \|_{L^2(\Omega)}^2 + \|v_n\|_{L^2(\partial\Omega)}^2 \right).
    \end{equation*}
    Setting \(w_n=v_n\|v_n\|_{L^2(\Omega)}^{-1}\), we have \(\|w_n\|_{L^2(\Omega)}=1\) and 
    \begin{equation}\label{poin}
        \|\nabla w_n\|_{L^2(\Omega)}^2 + \|w_n\|_{L^2(\partial\Omega)}^2 < \frac{1}{n}.
    \end{equation}
    Hence, since \(\Omega\) is bounded and \((w_n)\) is bounded in \(H^1(\Omega)\), we can deduce that there exists a subsequence of \((w_n)\) (denoted by \((w_n)\) again) and \(w\in H^1(\Omega)\) such that 
    \begin{align*}
        &w_n \to w &&\mbox{weakly in } H^1(\Omega),\\
        &w_n \to w &&\mbox{strongly in } L^2(\Omega) \mbox{ and } L^2(\partial\Omega).
    \end{align*}
    From \eqref{poin}, we can see that \(w=0\) a.e. on \(\partial\Omega\).
    Moreover, the lower semicontinuity leads \(\|\nabla w\|_{L^2(\Omega)}\le 
    \liminf_{n\to +\infty} \|\nabla w_n\|_{L^2(\Omega)} = 0\), which implies \(w\) is constant, especially \(w=0\).
    However, this contradicts that \(\|w\|_{L^2(\Omega)}=1\).
    The equivalence of the norm follows from \eqref{eq:pineq} and the trace theorem. This completes the proof.
\end{proof}

In the framework of the variational inequality,
the weak solution of \eqref{eq:gE} is defined as follows\/{\rm :}

\begin{defi}[Weak solution of \eqref{eq:gE}]\label{D:var_ineq}
For given $f\in V^\ast$, a function \(u\in K\) is said to be a weak solution of \eqref{eq:gE} if and only if the following inequality holds\/{\rm :}
\begin{equation}
\label{var_ineq}
a(u,v-u) + J(v) - J(u) \ge \left\langle f,v-u \right\rangle \quad \text{ for all }\ v\in K.
\end{equation}
\end{defi}

\begin{rmk}[Weak form of \eqref{eq:gE}]\label{R:weak_form}
    \rm
    We note that the definition of weak solutions for \eqref{eq:gE} is defined by employing the variational inequality \eqref{var_ineq}.
    From physical motivation, we deal exclusively with the case of \(\boldsymbol{\beta}(u)=\boldsymbol{\sigma} |u|^{r-2}u\) (\(r>1\), \(\boldsymbol{\sigma}>0\)). 
In this case, setting $v=u\pm\lambda \varphi\in K$ for all $\varphi\in K$ and $\lambda>0$ and letting $\lambda\to 0_+$, we see by $\langle J'(u), \phi\rangle_{L^{r}(\partial\Omega)}= \langle \boldsymbol{\sigma}|u|^{r-2}u, \phi\rangle_{L^{r}(\partial\Omega)}$ that 
 \begin{equation}
    \label{weak_form}
    a(u,\varphi) + \boldsymbol{\sigma}\int_{\partial\Omega} |u|^{r-2}u(x) \varphi(x)\, \d\sigma = 
    \langle f, \varphi\rangle  \qquad\mbox{for all } \varphi \in K.
    \end{equation}
Thus the weak solution $u\in K$ to \eqref{var_ineq} satisfies \eqref{weak_form}. In particular, by the monotonicity of $\boldsymbol{\beta}$, uniqueness of solutions to \eqref{weak_form} also follows, and then 
 every weak solution of \eqref{var_ineq} coincides with the solution of this weak form \eqref{weak_form}.
The reason why weak solutions are defined by \eqref{var_ineq} instead of \eqref{weak_form} will be explained in Remark \ref{R:UBI}. 
\end{rmk}

\subsection{Solvability of \eqref{eq:gE}} 
Let us briefly touch on the existence and uniqueness of a weak solution to \eqref{eq:gE}.

\begin{thm}[Existence and uniqueness of weak solutions to \eqref{eq:gE}]\label{T:well-posedness}
There exists a unique weak solution to \eqref{eq:gE}.
\end{thm}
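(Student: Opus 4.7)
The plan is to recast the variational inequality \eqref{var_ineq} as the Euler--Lagrange characterization of a convex minimization problem over \(K\). I would introduce the functional \(\Phi:V\to(-\infty,+\infty]\) defined by
\[
\Phi(v) = \tfrac{1}{2}a(v,v) + J(v) - \langle f,v\rangle\quad\text{if } v\in K,
\qquad \Phi(v) = +\infty \quad\text{otherwise},
\]
and show that any minimizer of \(\Phi\) satisfies \eqref{var_ineq}, and conversely. The first step is to check that \(\Phi\) is proper, convex and sequentially weakly lower semicontinuous on \(V\). Properness uses \(0\in K\) (since \(\boldsymbol{\beta}(0)\ni 0\) gives \(j(0)<+\infty\) after a harmless normalization); convexity follows from symmetry and positive semidefiniteness of \(a\), convexity of the integrand \(j\), and convexity of \(K\); lower semicontinuity of \(J\) is obtained from the compactness of the trace \(H^1(\Omega)\hookrightarrow L^2(\partial\Omega)\), which upgrades weak \(H^1\)-convergence to \(\d\sigma\)-almost-everywhere convergence on \(\partial\Omega\) along a subsequence, followed by Fatou's lemma applied to \(j(v_n)+C\) with \(C\) large enough so that \(j+C\ge 0\) (such \(C\) exists thanks to \eqref{hyp:J}).

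The second step is coercivity. Combining the ellipticity bound \(a(v,v)\ge \alpha\|\nabla v\|_{L^2(\Omega)}^2\) with \eqref{hyp:J} and the equivalent-norm statement of Lemma \ref{poincare}, I would extract a constant \(c>0\) such that
\[
\Phi(v)\ge c\|v\|_V^2 - C_0 - \|f\|_{V^\ast}\|v\|_V \qquad\text{for all } v\in K,
\]
which diverges to \(+\infty\) as \(\|v\|_V\to\infty\). The direct method of the calculus of variations then yields a minimizer \(u\in K\). Passing from the minimality of \(u\) to \eqref{var_ineq} is standard: for any \(v\in K\) and \(t\in(0,1]\), the convex combination \(u+t(v-u)=(1-t)u+tv\) lies in \(K\); expanding the quadratic part, using convexity of \(J\), dividing by \(t\), and letting \(t\to 0_+\) produces the variational inequality.

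For uniqueness, I would take two solutions \(u_1,u_2\in K\), test \eqref{var_ineq} for \(u_1\) against \(v=u_2\) and for \(u_2\) against \(v=u_1\), and sum the two inequalities: the \(J\)-contributions cancel and what remains is \(a(u_1-u_2,u_1-u_2)\le 0\); ellipticity then forces \(u_1-u_2\) to be a constant, after which Remark \ref{R:weak_form} allows me to identify the variational inequality with the pointwise weak form \eqref{weak_form} and to conclude via the strict monotonicity of \(\boldsymbol{\beta}\) that the constant vanishes. The step I expect to be the main obstacle is coercivity: the bilinear form \(a\) alone controls only gradients, so ellipticity in isolation cannot furnish a full \(H^1\)-bound; it is precisely the interplay of Lemma \ref{poincare} with the boundary growth condition \eqref{hyp:J} that upgrades gradient plus boundary control into control of the full \(V\)-norm, and verifying that this interplay is compatible with the structure of \(K\) (in particular with the lower-semicontinuity argument applied to \(J\)) is the delicate point.
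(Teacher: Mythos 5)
Your existence argument is essentially the paper's: you minimize the same energy $E(v)=\tfrac12 a(v,v)-\langle f,v\rangle+J(v)$ over $K$, obtain coercivity from \eqref{hyp:J} together with Lemma~\ref{poincare} exactly as the paper does, and recover \eqref{var_ineq} from minimality by the convex-combination argument (the paper instead quotes Lions; the two are interchangeable). Your extra details are sound, with one small correction: the uniform pointwise lower bound on $j$ needed for Fatou does not come from \eqref{hyp:J} (a bound on the integral $J$) but from $\boldsymbol{\beta}(0)\ni0$, i.e.\ $0\in\partial j(0)$, which gives $j\ge j(0)>-\infty$ everywhere.

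The uniqueness half is where you diverge, and where there is a gap. Summing the two variational inequalities correctly yields $a(u_1-u_2,u_1-u_2)\le0$, hence $u_1-u_2\equiv c$ constant. But your way of killing $c$ — identifying \eqref{var_ineq} with the weak form \eqref{weak_form} via Remark~\ref{R:weak_form} and invoking ``strict monotonicity of $\boldsymbol{\beta}$'' — is only available in the special case $\boldsymbol{\beta}(w)=\boldsymbol{\sigma}|w|^{r-2}w$ treated in that remark, whereas Theorem~\ref{T:well-posedness} is stated for an arbitrary (possibly multivalued, not strictly monotone) maximal monotone graph, for which neither the equivalence with \eqref{weak_form} nor any strict monotonicity has been established. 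The paper instead argues via strict convexity of $E$, which stays entirely inside the variational framework. You have in fact located the genuine difficulty correctly: since $a$ annihilates constants, some strictness of $J$ (equivalently of $j$) along the constant direction is what is really needed to exclude $c\neq0$, and \eqref{hyp:J} alone does not supply it (a convex $j$ can dominate $\delta w^2-C_0$ while being affine on an interval). So your reduction to a constant is fine, but the concluding step should be rephrased in terms of strict convexity of $j$ (as holds for $j(w)=\tfrac{\boldsymbol{\sigma}}{r}|w|^r$) rather than routed through the weak form; as written, the final step does not cover the generality claimed by the theorem.
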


\begin{proof}
We first seek a function satisfying the following minimizing problem\/{\rm :} 
\begin{equation}
\text{For given $f\in V^\ast$, find $u\in K$ such that }
    \label{eq:mini}
    E(u) = \inf_{v\in K} E(v),
\end{equation}
where $E\colon V\supset K\to (-\infty,\infty]$ is the functional defined by
\begin{equation}\label{eq:defE}
    E(v) = \frac{1}{2} a(v,v) - \langle f,v \rangle + J(v).
\end{equation}
It is easy to see that \(E\) is convex, lower semicontinuous and \(E\not\equiv\infty\).
Moreover, it follows from \eqref{hyp:J} and Lemma \ref{poincare} that
\begin{equation*}
    \lim_{v\in K,\ \|v\|_{V}\to +\infty} E(v) = +\infty,
\end{equation*} 
which along with \cite[Corollary 3.23]{B11} ensures the existence of
$u\in K$ satisfying \eqref{eq:mini}.
Thus setting $E_1$ and $E_2$ as the first two terms and the third term of the right-hand side in \eqref{eq:defE}, respectively, we have $E_1(u)+E_2(u)=\inf_{v\in K}(E_1+E_2)$, and hence, \cite[Theorem 1.6]{Lions} yields
$$
\langle E_1'(u),v-u\rangle+E_2(v)-E_2(u)\ge 0 \quad \text{ for all }\ v\in K,
$$ 
which implies \eqref{var_ineq}. 
The uniqueness of weak solutions follows immediately from the strict convexity of \(E\).
Indeed, let \(u_1\) and \(u_2\in K\) be two minimizers for \eqref{eq:mini}.
If \(u_1\neq u_2\), then
\begin{align*}
    E(u_i) \le E\left(\frac{u_1+u_2}{2}\right) < \frac{1}{2} \left\{ E(u_1) + E(u_2) \right\} \qquad (i=1,2),
\end{align*}
that is, \(E(u_1)<E(u_2)\) and \(E(u_2)<E(u_1)\).
This completes the proof.
\end{proof}

\subsection{Homogenization problem for \eqref{eq:gE}}

As for the proof of the homogenization theorem for \eqref{eq:gE} with $A_{\chi_{\Omega_1}}$ being replaced by $A^n$, the following notion developed by Murat-Tartar is very useful.  

\begin{defi}[H-convergence, cf.~\cite{Murat-Tartar}]
Let $A\in \mathcal{M}(\alpha,\beta)$.  
For $n>0$, a sequence $A^n\in \mathcal{M}(\alpha,\beta)$ H-converges to an element $A_{\rm hom}\in [L^{\infty}(\Omega)]^{d\times d}$ {\rm(}denoted by $A^n\overset{H}{\to} A_{\rm hom}${\rm)} if and only if, for any $\omega\Subset \Omega$ and any $f \in H^{-1}(\omega)$, the weak solution $u^n\in H^1_0(\omega)$ of
$$
-\dv(A^n \nabla u^n)=f\quad \text{ in } H^{-1}(\omega)
$$
is such that 
\begin{align*}
    u^n&\to u_{\rm hom} &&\text{ weakly in }  H^1_0(\omega),\\
   A^n\nabla u^n&\to A_{\rm hom}\nabla u_{\rm hom} &&\text{ weakly in }  [L^2(\omega)]^d, 
\end{align*}
where \(u_{\rm hom}\in H^1_0(\omega)\) is the weak solution to
$$
-\dv(A_{\rm hom} \nabla u_{\rm hom})=f\quad \text{ in } H^{-1}(\omega).
$$
\end{defi}

Then we have the following 
\begin{thm}[Homogenization theorem]\label{T:H-conv}
Let $u^n\in K$ be the unique weak solution to
\begin{align}\label{eq:gE1}
\begin{cases}
-\dv(A^n \nabla u^n)=f\quad &\text{ in } \Omega,\\
-A^n \nabla u^n\cdot \nu
= \boldsymbol{\beta}(u^n)
\quad &\text{ on } \partial \Omega,
\end{cases}
\end{align}
where $A^n\in \mathcal{M}(\alpha,\beta)$, $f\in V^*$ and $\boldsymbol{\beta} $ is a maximal monotone operator in $\R$.
Then there exist a {\rm(}not relabeled{\rm)} subsequence of $(n)$, $u_{\rm hom}\in K$, the homogenized matrix $A_{\rm hom}\in \mathcal{M}(\alpha',\beta')$ and $\beta'>\alpha'>0$  such that  
\begin{align*}
    u^n&\to u_{\rm hom} &&\text{ weakly in }  V,\\
   A^n\nabla u^n&\to A_{\rm hom}\nabla u_{\rm hom} &&\text{ weakly in }  [L^2(\Omega)]^d.
\end{align*}
Moreover, $u_{\rm hom}\in K$ is a weak solution to the homogenized equation,
\begin{align}\label{eq:hom}
\begin{cases}
-\dv(A_{\rm hom} \nabla u_{\rm hom})=f\quad &\text{ in } \Omega,\\
-A_{\rm hom} \nabla u_{\rm hom}\cdot \nu
= \boldsymbol{\beta}(u_{\rm hom})
\quad &\text{ on } \partial \Omega.
\end{cases}
\end{align}
\end{thm}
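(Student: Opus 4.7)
My strategy is the direct method: derive uniform estimates for $u^n$, extract a weak $V$-limit, a strong trace limit, a flux limit, and an $H$-limit of $(A^n)$, and then pass to the limit in the weak formulation equivalent to the variational inequality of Definition \ref{D:var_ineq} (cf.~Remark \ref{R:weak_form}).

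I would first test \eqref{var_ineq} with the fixed admissible choice $v_0\equiv 0\in K$ (which lies in $K$ since $\boldsymbol{\beta}(0)\ni 0$ ensures $0$ minimises $j$, hence $j(0)<\infty$) and combine the ellipticity of $A^n\in\mathcal{M}(\alpha,\beta)$, hypothesis \eqref{hyp:J}, Lemma \ref{poincare} and Young's inequality applied to $\langle f,u^n\rangle$ to obtain the uniform bounds
\begin{equation*}
\|u^n\|_V+J(u^n)+\|A^n\nabla u^n\|_{[L^2(\Omega)]^d}\le C.
\end{equation*}
Passing to successive subsequences then yields $u^n\rightharpoonup u_{\rm hom}$ weakly in $V$ and, by the compactness of the trace $H^1(\Omega)\hookrightarrow L^2(\partial\Omega)$, strongly in $L^2(\partial\Omega)$ together with a.e.~convergence on $\partial\Omega$; $A^n\nabla u^n\rightharpoonup\xi$ weakly in $[L^2(\Omega)]^d$ for some $\xi$; and, by the Murat--Tartar $H$-compactness theorem, $A^n\overset{H}{\to} A_{\rm hom}$ for some $A_{\rm hom}\in\mathcal{M}(\alpha',\beta')$. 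Lower semicontinuity of $J$ under strong trace convergence gives $J(u_{\rm hom})\le\liminf_n J(u^n)<\infty$, so $u_{\rm hom}\in K$.

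Next I would identify $\xi=A_{\rm hom}\nabla u_{\rm hom}$. Since $-\dv(A^n\nabla u^n)=f$ in $\mathcal{D}'(\Omega)$ with the same $f\in V^\ast\subset H^{-1}(\Omega)$ for every $n$, the flux convergence that follows from $H$-convergence applies on every $\omega\Subset\Omega$ (only a bounded $H^1(\omega)$-sequence and a compact source are needed there, the boundary behaviour on $\partial\omega$ being irrelevant to the flux identification), producing $A^n\nabla u^n\rightharpoonup A_{\rm hom}\nabla u_{\rm hom}$ weakly in $[L^2(\omega)]^d$. Comparison with the already established global weak limit $\xi$ then forces $\xi=A_{\rm hom}\nabla u_{\rm hom}$ almost everywhere on $\Omega$, giving the second convergence in the statement.

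Finally, I would pass to the limit in the weak form
\begin{equation*}
a^n(u^n,\varphi)+\int_{\partial\Omega}\boldsymbol{\beta}(u^n)\varphi\,\d\sigma=\langle f,\varphi\rangle,\qquad\varphi\in K,
\end{equation*}
which, as in Remark \ref{R:weak_form}, is equivalent to \eqref{var_ineq} under the assumed single-valuedness of $\boldsymbol{\beta}$. The bilinear term converges to $a_{\rm hom}(u_{\rm hom},\varphi)$ by the flux convergence just obtained. The nonlinear boundary term is handled by combining the a.e.~convergence $\boldsymbol{\beta}(u^n)\to\boldsymbol{\beta}(u_{\rm hom})$ on $\partial\Omega$ (continuity of the single-valued $\boldsymbol{\beta}$) with uniform integrability of $(\boldsymbol{\beta}(u^n)\varphi)$, coming from the uniform bound on $J(u^n)$ via a de la Vall\'ee Poussin argument on the Legendre conjugate $j^\ast$; in the physically motivating case $\boldsymbol{\beta}(w)=\boldsymbol{\sigma}|w|^{r-2}w$ this reduces to the boundedness of $(u^n)$ in $L^r(\partial\Omega)$ together with a.e.~convergence on the boundary. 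Feeding the resulting homogenised weak form into the subdifferential inequality $j(v)-j(u_{\rm hom})\ge\boldsymbol{\beta}(u_{\rm hom})(v-u_{\rm hom})$ reproduces \eqref{var_ineq} with $A^n$ replaced by $A_{\rm hom}$ and establishes that $u_{\rm hom}$ is a weak solution of \eqref{eq:hom}. The main obstacle lies precisely in this last step: the $H$-convergence apparatus is designed for Dirichlet data on compactly contained subdomains, so one must carefully upgrade local flux convergence to the whole of $\Omega$ and simultaneously control the nonlinear boundary term $\int_{\partial\Omega}\boldsymbol{\beta}(u^n)\varphi\,\d\sigma$ uniformly in $n$ for a general maximal monotone $\boldsymbol{\beta}$ using only the coercivity assumption \eqref{hyp:J}.
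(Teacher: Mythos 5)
Your compactness step (a priori bound from testing \eqref{var_ineq} with $v=0$, extraction of weak $V$-limits and strong trace limits, $H$-compactness of $(A^n)$, and identification of the flux limit via the interior equation $-\dv(A^n\nabla u^n)=f$ in $H^{-1}(\Omega)$) matches the paper. The divergence --- and the gap --- is in the final step, where you pass to the limit in the weak form $a^n(u^n,\varphi)+\int_{\partial\Omega}\boldsymbol{\beta}(u^n)\varphi\,\d\sigma=\langle f,\varphi\rangle$. First, the theorem only assumes that $u^n$ solves the variational inequality of Definition \ref{D:var_ineq}; the equivalence with the weak form \eqref{weak_form} is justified in Remark \ref{R:weak_form} only for $\boldsymbol{\beta}(w)=\boldsymbol{\sigma}|w|^{r-2}w$, where $u\pm\lambda\varphi\in K$ and $J$ is Gateaux differentiable. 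For a general single-valued maximal monotone $\boldsymbol{\beta}$ (whose domain may be a proper subinterval of $\R$, or whose potential $j$ may fail a doubling condition) neither property is available, so your starting identity for $u^n$ is not granted. Second, your uniform-integrability argument does not close: the a priori bound controls $J(u^n)=\int_{\partial\Omega}j(u^n)\,\d\sigma$, whereas a de la Vall\'ee Poussin argument for $(\boldsymbol{\beta}(u^n))$ needs a bound on $\int_{\partial\Omega}j^{*}(\boldsymbol{\beta}(u^n))\,\d\sigma$; by Fenchel--Young and convexity one has $j^{*}(\boldsymbol{\beta}(u^n))=u^n\boldsymbol{\beta}(u^n)-j(u^n)\le j(2u^n)-2j(u^n)$, which is controlled by $J(u^n)$ only under a $\Delta_2$-type condition on $j$. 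So your route works in the power-law case but not in the stated generality; this is precisely the obstruction the paper flags in Remark \ref{R:UBI}.

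The missing idea is to pass to the limit in the variational inequality itself, which requires no convergence of the boundary term at all. Fix $v\in K$ and write $a^n(u^n,v-u^n)=a^n(u^n,v)-a^n(u^n,u^n)$. The first term converges by the flux convergence you already established; for the second, the Murat--Tartar energy-density convergence \eqref{conver:dis} yields the lower bound \eqref{liminf:bilinear}, $a_{\rm hom}(u_{\rm hom},u_{\rm hom})\le\liminf_{n}a^n(u^n,u^n)$, hence $\limsup_{n}\bigl(-a^n(u^n,u^n)\bigr)\le-a_{\rm hom}(u_{\rm hom},u_{\rm hom})$; likewise $\limsup_{n}\bigl(-J(u^n)\bigr)\le-J(u_{\rm hom})$ by weak lower semicontinuity of $J$ on $L^2(\partial\Omega)$. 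Taking the $\limsup$ in $a^n(u^n,v-u^n)+J(v)-J(u^n)\ge\langle f,v-u^n\rangle$ then gives \eqref{var_ineq} with $a_{\rm hom}$ directly, for every $v\in K$. This one-sided, lower-semicontinuity argument is the reason the paper defines weak solutions via \eqref{var_ineq} rather than \eqref{weak_form}; if you want to keep the weak-form route, you must restrict to $\boldsymbol{\beta}(w)=\boldsymbol{\sigma}|w|^{r-2}w$ (or impose differentiability of $J$ and a doubling condition on $j$) and state that restriction explicitly.
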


\begin{proof}
By \eqref{var_ineq} and \eqref{hyp:J}, it holds that
\begin{equation*}
a^n(u^n,u^n) + \delta\|u^n\|_{L^2(\partial \Omega)}^2 \le C_0 + \langle f,u^n \rangle,
\end{equation*}
which along with Lemma \ref{poincare} yields \(\|u^n\|_{V}\le C\). 
By virtue of the boundedness of \((u^n)_{n>0}\) in \(K\), there exist a (not relabeled) subsequence of $(n)$ and \(u_{\rm hom}\in V\) such that
\begin{align}
	\label{conver:H1}
	&u^n \to u_{\rm hom} \qquad\mbox{weakly in } V,\\[2mm]
	\nonumber
	&u^n \to u_{\rm hom} \qquad\mbox{strongly in } L^2(\Omega)\cap L^2(\partial \Omega).
\end{align}
Furthermore, by the $H$-compactness \cite[Theorem 2]{Murat-Tartar}, $A^n\overset{H}{\to} A_{\rm hom}$ also holds.
In particular, since it is obvious that \(u^n\pm\varphi\in K\) for any \(\varphi\in H^1_0(\Omega)\), by choosing \(v=u^n\pm\varphi\) in \eqref{var_ineq}, it follows that
\begin{equation*}
a^n(u^n,\varphi) = \langle f,\varphi \rangle \qquad\text{ for all } \varphi\in H^1_0(\Omega),
\end{equation*}
that is, 
\begin{equation}
	\label{eq:homo1}
-\mathrm{div}\left( A^n \nabla u^n \right) = f \qquad\mbox{in } H^{-1}(\Omega).
\end{equation}
Hence applying \cite[Theorem 1]{Murat-Tartar} to \eqref{eq:homo1}, we obtain
\begin{equation}
	\label{conver:flux}
	A^n\nabla u^n \to A_{\rm hom}\nabla u_{\rm hom} \qquad\mbox{weakly in } [L^2(\Omega)]^d,
\end{equation} 
\begin{equation}
	\label{conver:dis}
	\int_{\Omega} A^n(x)\nabla u^n(x)\cdot\nabla u^n(x) \varphi(x)\, \d x \to \int_{\Omega} A_{\rm hom}(x)\nabla u_{\rm hom}(x)\cdot\nabla u_{\rm hom}(x) \varphi(x) \, \d x 
\end{equation}
for all $\varphi\in C_{\rm c}^{\infty}(\Omega)$. 
Choosing \(\varphi\in C_{\rm c}^\infty(\Omega)\) such that \(0 \le \varphi \le 1\) in \eqref{conver:dis}, 
we can deduce that
\begin{equation*}
	\int_{\Omega} A_{\rm hom}(x)\nabla u_{\rm hom}(x)\cdot\nabla u_{\rm hom}(x) \varphi(x)\, \d x 
	\le \liminf_{n\to +\infty} a^n(u^n,u^n) \ \text{ for all }\ \varphi\in C_{\rm c}^{\infty}(\Omega),
\end{equation*}
whence follows 
the lower bound inequality,
\begin{equation}
	\label{liminf:bilinear}
	a_{\rm hom}(u_{\rm hom},u_{\rm hom}) 
 =
 \sup_{\substack{\varphi\in C_{\rm c}^{\infty}(\Omega),\\ \|\varphi\|_{L^{\infty}(\Omega)}\le 1}}
 \int_{\Omega}A_{\rm hom}(x)\nabla u_{\rm hom}(x)\cdot \nabla u_{\rm hom}(x)\varphi(x)\, \d x
\le
 \liminf_{n\to +\infty} a^n(u^n,u^n)
\end{equation}
(see \cite[4.26]{B11}). Here we 
note that the upper bound inequality is more delicate (see Remark \ref{R:UBI} below).
Since \(K\) is closed convex subset in \(V\), \(u_{\rm hom}\in K\) holds.
Therefore the weakly lower semicontinuous on \(L^2(\partial \Omega)\) of \(J\) ensures that
\begin{equation*}
	J(u_{\rm hom}) \le \liminf_{n\to +\infty} J(u^n),
\end{equation*}
which together with the definition of weak solutions, \eqref{conver:H1}, \eqref{conver:flux} and \eqref{liminf:bilinear} yields
\begin{equation*}
	a_{\rm hom}(u_{\rm hom},v-u_{\rm hom}) + J(v) - J(u_{\rm hom}) \ge \left\langle f,v-u_{\rm hom} \right\rangle \quad\text{ for all }\ v\in K.
\end{equation*}
Thus \(u_{\rm hom}\in K\) turns out to be a weak solution to the homogenized equation.
This completes the proof.
\end{proof}

\begin{rmk}[Energy convergence]\label{R:UBI}
\rm
Under the usual definition via the weak form, the following weak convergence will be required in the proof of Theorem \ref{T:H-conv}\/{\rm :} 
$$
\boldsymbol{\beta}(u^n)\to \boldsymbol{\beta}(u_{\rm hom})
\quad \text{ weakly in } L^{\ell}(\partial \Omega)
$$
for some $1<\ell<+\infty$ (e.g., $\ell=r/(r-1)$ with $\boldsymbol{\beta}(w)=|w|^{r-2}w$).
However, this proof is more complicated in general.
Thus Definition \ref{D:var_ineq} is more reasonable than the usual definition using the weak form \eqref{weak_form}.
Moreover, we note that even if the homogeneous Dirichlet boundary condition, the upper bound inequality,
\begin{equation}
\limsup_{n\to +\infty} a^n(u^n,u^n)
\le
a_{\rm hom}(u_{\rm hom},u_{\rm hom}) 
\label{eq:ubi}
\end{equation}
is derived with the aid of the weak form for the homogenized equation. 
On the other hand, as soon as $u_{\rm hom}$ turns out to be a weak solution to the homogenized equation satisfying the weak form
(for instance, 
it suffices to assume that 
$$
\lim_{\lambda\to 0_+}\frac{J(w+\lambda v)-J(w)}{\lambda}=\int_{\partial\Omega}\partial j(w)(x)v(x)\, \d \sigma\quad \text{ for all } v\in K),
$$
we readily obtain \eqref{eq:ubi} by noting that 
\begin{align*}
\limsup_{n\to +\infty}   a^n(u^n,u^n)
&=
\lim_{n\to +\infty}\langle f,u^n\rangle- \liminf_{n\to +\infty} J(u^n)\\
&\le 
\langle f,u_{\rm hom}\rangle - J(u_{\rm hom})
=  a_{\rm hom}(u_{\rm hom},u_{\rm hom}).
 \end{align*}
Therefore convergence of the energy $\mathcal{E}(\chi_{\Omega_1}^n)$ in \eqref{eq: minseq} can be obtained, which is applied to the proof of Theorem \ref{T:existence} below.  
\end{rmk}

\begin{rmk}[Qualitative properties of homogenized matrices]
\rm
It is noteworthy that the homogenized matrix $A_{\rm hom}\in [L^{\infty}(\Omega)]^{d\times d}$ is completely unaffected by (nonlinear) boundary conditions, and then it is also characterized exactly as in the case of the homogeneous Dirichlet boundary condition. 
Hence, in case $d=1$, it can be written as the so-called harmonic mean. Conversely (i.e.,~$d\ge 2$), it cannot be written explicitly in general. 
However, since $A$ is symmetric, the following upper and lower bounds can be obtained\/{\rm :}
\begin{align}
    \underline{A}\xi\cdot \xi\le A_{\rm hom}\xi\cdot \xi\le \overline{A}\xi\cdot \xi\quad \text{ for all } \xi\in \R^d,   \label{eq:est-hom}
\end{align}
where $\underline{A}$ is the inverse of the weak limit of $A^{-1}$ and $\overline{A}$ is the weak limit of $A$ (see, e.g.,~\cite[Theorem 1.3.14]{A02}).
\end{rmk}

\section{Relaxation problem for \eqref{eq:OP}}\label{S:relax}

This section is devoted to proving the existence theorem for minimizers of a relaxation problem (see Theorem \ref{T:existence} below). Thanks to Theorem \ref{T:H-conv}, most of the proof relies on the results in \cite[Theorem 3.2.1]{A02}; however, we shall show it for completeness and the reader's convenience. 
Furthermore, we shall describe how to construct a candidate for the minimizers of the relaxation problem numerically. 
In what follows, we set $A_{\chi_{\Omega_1}}=\kappa[\chi_{\Omega_1}]$ as in \eqref{eq:kappa} and write $\kappa[\chi_{\Omega_1}]=\kappa$ and $\chi_{\Omega_1}=\chi$ for simplicity.

\subsection{Existence theorem for minimizers}
Since the limit of the minimizing sequence does not belong to $\mathcal{CD}$ in general, we consider the following relaxation problem\/{\rm :} 
\begin{align}\label{eq:RP}
\inf_{(\theta, \kappa_{\rm hom})\in \mathcal{RD}} \mathcal{E}_{\rm hom}(\theta, \kappa_{\rm hom}),
\end{align}
where $\mathcal{RD}$ is the {\it relaxed design domain} defined by 
\begin{align*}
\mathcal{RD}
:= \left\{ 
\begin{aligned}
(\theta, \kappa_{\rm hom})\in L^{\infty}(\Omega;[0,1]\times \R^{d\times d})\colon
&
\text{there exists }
(\chi^n,\kappa^n)\in  L^{\infty}(\Omega;\{0,1\}\times \{\alpha,\beta\})\\ 
&\text{such that }
\kappa^n=\alpha(1-\chi^n)+\beta\chi^n,\\
&\chi^n\to \theta \text{ weakly-$\ast$ in } L^{\infty}(\Omega;[0,1]),\\
&\kappa^n \mathbb{I} \overset{H}{\to} \kappa_{\rm hom} \text{ and } \|\theta\|_{L^1(\Omega)}= \gamma |\Omega|
\end{aligned}
\right\},
\end{align*}
$\mathcal{E}_{\rm hom}:L^{\infty}(\Omega;[0,1]\times \R^{d\times d})\to \R$ is the relaxed Dirichlet energy given as 
$$
\mathcal{E}_{\rm hom}(\theta,\kappa_{\rm hom})
=\int_{\Omega}\kappa_{\rm hom}(x)\nabla u_{\rm hom}(x)\cdot \nabla u_{\rm hom}(x)\, {\rm d}x
$$
and $u_{\rm hom}\in K$ is a weak solution to the homogenized equation \eqref{eq:hom} with $A_{\rm hom}=\kappa_{\rm hom}$.

Then we see that the relaxation problem \eqref{eq:RP} is a true relaxation of the original design problem \eqref{eq:OP} in the following sense\/{\rm :}
\begin{thm}[Existence theorem for minimizers of \eqref{eq:RP}]\label{T:existence}
Let $u^n\in K$ be a weak solution to \eqref{eq:gE1} satisfying the weak from.
Let $u_{\rm hom}\in K$ be a weak solution to the homogenized equation
\eqref{eq:hom}. 
Then there exists at least one minimizer of \eqref{eq:RP}.
Furthermore, it holds that
\begin{equation}\label{eq:mini-charac}
\inf_{\chi\in \mathcal{CD}} \mathcal{E}(\chi)
=
\min_{(\theta,\kappa_{\rm hom})\in \mathcal{RD}} \mathcal{E}_{\rm hom}(\theta, \kappa_{\rm hom}),
\end{equation}
and every minimizer of \eqref{eq:RP} is characterized as a limit of the minimizing sequence in \eqref{eq:OP}, that is, for any minimizing sequence $(\chi^n)$ in $\mathcal{CD}$, there exist a {\rm(}not relabeled{\,\rm)} subsequence of $(n)$ and $(\theta^\ast,\kappa_{\rm hom}^\ast)\in L^{\infty}(\Omega;[0,1]\times \R^{d\times d})$ such that
\begin{equation}
    \chi^n\to \theta^\ast \text{ weakly-$\ast$ in } L^{\infty}(\Omega;[0,1]),\quad 
\kappa^n\mathbb{I}
\overset{H}{\to} \kappa_{\rm hom}^\ast 
\label{eq:cnv}
\end{equation}
and
\begin{equation}\label{eq:mini-charac2}
\inf_{\chi \in \mathcal{CD}} \mathcal{E}(\chi)
=
\mathcal{E}_{\rm hom}(\theta^\ast,\kappa_{\rm hom}^\ast).
\end{equation}
Conversely, every minimizer in \eqref{eq:RP} is attained by a limit of the minimizing sequence $(\chi^n)$ in  \eqref{eq:OP}.
\end{thm}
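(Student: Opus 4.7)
The plan is to apply the direct method to the relaxed functional, combining the $H$-compactness of $\mathcal{M}(\alpha,\beta)$ with Theorem~\ref{T:H-conv} and the energy convergence of Remark~\ref{R:UBI}. The template is that of \cite[Theorem 3.2.1]{A02}; the new ingredient is handling the nonlinear boundary term through the variational-inequality framework established in Section~2.

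First I would establish the existence of a minimizer for \eqref{eq:RP}. Taking a minimizing sequence $(\theta^m,\kappa_{\rm hom}^m)\subset\mathcal{RD}$, weak-$\ast$ compactness in $L^\infty(\Omega;[0,1])$ together with $H$-compactness from \cite[Theorem 2]{Murat-Tartar} extracts a subsequence converging to limits $\theta^\ast\in L^\infty(\Omega;[0,1])$ and $\kappa_{\rm hom}^\ast\in\mathcal{M}(\alpha',\beta')$. The volume constraint $\|\theta^\ast\|_{L^1(\Omega)}=\gamma|\Omega|$ passes to the limit by testing the weak-$\ast$ convergence against the constant function $1$. Membership $(\theta^\ast,\kappa_{\rm hom}^\ast)\in\mathcal{RD}$ follows from a diagonal argument: for each $m$ the definition of $\mathcal{RD}$ supplies an approximating sequence $(\chi^{m,n})_n\subset L^\infty(\Omega;\{0,1\})$, and since both $H$-convergence on bounded subsets of $\mathcal{M}(\alpha,\beta)$ and bounded weak-$\ast$ balls in $L^\infty$ are metrizable, one selects diagonal indices $n_m$ such that $(\chi^{m,n_m})_m$ is the required approximating sequence for $(\theta^\ast,\kappa_{\rm hom}^\ast)$. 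Continuity of $\mathcal{E}_{\rm hom}$ along this convergence follows from Theorem~\ref{T:H-conv}: the associated solutions $u^m$ converge weakly in $V$ to $u^\ast$, the fluxes $\kappa_{\rm hom}^m\nabla u^m$ converge weakly in $[L^2(\Omega)]^d$ to $\kappa_{\rm hom}^\ast\nabla u^\ast$, and together with the upper-bound inequality of Remark~\ref{R:UBI} this yields $\mathcal{E}_{\rm hom}(\theta^m,\kappa_{\rm hom}^m)\to\mathcal{E}_{\rm hom}(\theta^\ast,\kappa_{\rm hom}^\ast)$. Hence $(\theta^\ast,\kappa_{\rm hom}^\ast)$ attains the infimum.

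For \eqref{eq:mini-charac} and the forward characterization, the inequality $\inf_{\mathcal{CD}}\mathcal{E}\ge\min_{\mathcal{RD}}\mathcal{E}_{\rm hom}$ is obtained by applying the same compactness-continuity argument directly to a minimizing sequence $(\chi^n)\subset\mathcal{CD}$, which, up to extraction, yields $(\theta^\ast,\kappa_{\rm hom}^\ast)\in\mathcal{RD}$ satisfying \eqref{eq:cnv} together with the energy convergence $\mathcal{E}(\chi^n)\to\mathcal{E}_{\rm hom}(\theta^\ast,\kappa_{\rm hom}^\ast)$; the weak-form solvability hypothesis enters precisely here, to justify Remark~\ref{R:UBI}. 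The reverse inequality, and the converse statement that every minimizer of \eqref{eq:RP} is reached by a minimizing sequence for \eqref{eq:OP}, follow from the defining approximating sequence of any $(\theta,\kappa_{\rm hom})\in\mathcal{RD}$: since $\int_\Omega\chi^n\to\gamma|\Omega|$ but not necessarily with equality, a standard volume-preserving modification by a set of measure $|\gamma|\Omega|-\int_\Omega\chi^n|\to 0$ produces $\tilde\chi^n\in\mathcal{CD}$ with the same weak-$\ast$ and $H$-limits, and the energy convergence yields $\mathcal{E}(\tilde\chi^n)\to\mathcal{E}_{\rm hom}(\theta,\kappa_{\rm hom})$, so $\inf_{\mathcal{CD}}\mathcal{E}\le\mathcal{E}_{\rm hom}(\theta,\kappa_{\rm hom})$; when $(\theta,\kappa_{\rm hom})$ is itself a minimizer, $\tilde\chi^n$ is a minimizing sequence for \eqref{eq:OP} whose limit is $(\theta,\kappa_{\rm hom})$, giving the converse.

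The hard part is twofold. First, the closedness of $\mathcal{RD}$ demands a diagonal extraction across two distinct metrizable topologies, and one must verify that the volume-preserving modification preserves the $H$-limit---$H$-convergence is not implied by $L^1$ closeness of the coefficients in general, though a localization argument works because the correction occurs on a set of vanishing measure. Second, the energy convergence used throughout rests on the upper-bound inequality \eqref{eq:ubi} of Remark~\ref{R:UBI}, which is not automatic from the variational inequality alone and is precisely why the theorem assumes that $u^n$ satisfies the weak form.
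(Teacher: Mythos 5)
Your proposal is correct in substance, and the parts concerning \eqref{eq:mini-charac}, \eqref{eq:cnv}, \eqref{eq:mini-charac2} and the converse statement follow the paper's argument almost verbatim: pass to the limit along a minimizing sequence in $\mathcal{CD}$ using $H$-compactness and the energy convergence of Remark~\ref{R:UBI}, and for the reverse inequality modify the defining approximating sequence of an arbitrary $(\theta,\kappa_{\rm hom})\in\mathcal{RD}$ on a set of vanishing measure so that it lies in $\mathcal{CD}$, invoking the locality/stability of $H$-convergence to keep the same $H$-limit. You also correctly identify that the weak-form hypothesis on $u^n$ and $u_{\rm hom}$ is exactly what makes the upper-bound inequality \eqref{eq:ubi}, hence the energy \emph{convergence} (not mere lower semicontinuity), available.

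Where you genuinely diverge is the existence part. You run the direct method on the relaxed functional itself: take a minimizing sequence $(\theta^m,\kappa_{\rm hom}^m)$ in $\mathcal{RD}$, extract limits, and prove closedness of $\mathcal{RD}$ by a diagonal argument across the two metrizable topologies (weak-$\ast$ on the unit ball of $L^\infty$ and the $H$-topology on $\mathcal{M}(\alpha,\beta)$), plus continuity of $\mathcal{E}_{\rm hom}$ along sequences of homogenized (no longer two-phase) coefficients. This works, but it is strictly more machinery than the paper uses. The paper's observation is that existence comes for free from the characterization: the weak-$\ast$/$H$-limit $(\theta^\ast,\kappa_{\rm hom}^\ast)$ of a minimizing sequence of the \emph{original} problem belongs to $\mathcal{RD}$ \emph{by the very definition of} $\mathcal{RD}$ (no closedness or diagonal extraction needed), its relaxed energy equals $\inf_{\mathcal{CD}}\mathcal{E}$ by energy convergence, and the reverse inequality $\mathcal{E}_{\rm hom}(\theta,\kappa_{\rm hom})\ge\inf_{\mathcal{CD}}\mathcal{E}$ for every element of $\mathcal{RD}$ (via the volume-corrected approximating sequence) then shows this limit is already a minimizer. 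Your route buys a self-contained compactness statement for $\mathcal{RD}$ and a continuity statement for $\mathcal{E}_{\rm hom}$ on $\mathcal{RD}$, at the cost of having to justify metrizability of the $H$-topology and the upper-bound inequality for solutions associated with general coefficients in $\mathcal{M}(\alpha',\beta')$ rather than only two-phase ones; the paper's route avoids all of this. Both are legitimate, and your identification of the two delicate points (the diagonal extraction and the fact that $L^1$-closeness of coefficients does not by itself imply coincidence of $H$-limits without a localization or measure-of-the-correction argument) is accurate.
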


\begin{proof}
Let $(\chi^n)$ be a minimizing sequence in $\mathcal{CD}$ for \eqref{eq:OP}.
Due to $\|\chi^n\|_{L^{\infty}(\Omega)}\le 1$ and Theorem \ref{T:H-conv}, there exist
a (not relabeled) subsequence of $(n)$ and $(\theta^\ast, \kappa_{\rm hom}^\ast)\in L^{\infty}(\Omega;[0,1]\times \R^{d\times d})$ such that \eqref{eq:cnv}
and 
$$f
\int_\Omega \theta^\ast(x)\, \d x=\lim_{n\to +\infty}\int_\Omega \chi^n(x)\, \d x=\gamma|\Omega|.
$$
Moreover, one can derive by Remark \ref{R:UBI} that
\begin{align}
    \inf_{\chi\in \mathcal{CD}}\mathcal{E}(\chi)
    =\lim_{n\to +\infty} \mathcal{E}(\chi^n)&=\mathcal{E}_{\rm hom} (\theta^\ast,\kappa_{\rm hom}^\ast). \label{eq:minimizer}
\end{align}
In particular, the above continuity \eqref{eq:minimizer} is valid for non-minimizing sequences.

Now, we show that $(\theta^\ast,\kappa_{\rm hom}^\ast)\in \mathcal{RD}$ is a minimizer of \eqref{eq:RP}. For any $(\theta,\kappa_{\rm hom})\in\mathcal{RD}$, there exists $(\chi^n,\kappa^n)\in L^{\infty}(\Omega;\{0,1\}\times\{\alpha,\beta\})$ such that
\begin{align}\label{eq:RD}
    \chi^n\to \theta\quad \text{ weakly-$\ast$ in } L^{\infty}(\Omega;[0,1])\quad \text{ and }\quad 
    \kappa^n \mathbb{I}\overset{H}{\to} \kappa_{\rm hom}.
\end{align}
In particular, we can construct the sequence $(\chi^n)$ in $L^{\infty}(\Omega;\{0,1\})$ such that $\|\chi^n\|_{L^1(\Omega)}=\gamma|\Omega|$, i.e.,~$\chi^n\in \mathcal{CD}$. 
Indeed, 
let $\hat{\chi}^{n}\in L^{\infty}(\Omega;\{0,1\})$ be such that $\|\hat{\chi}^{n}\|_{L^1(\Omega)}=\gamma|\Omega|$.
Combining $\|\hat\chi^n\|_{L^{\infty}(\Omega)}\le 1$ with
$$
\lim_{n\to \infty}
\int_{\Omega} \hat\chi^n(x)\, \d x
=
\gamma|\Omega|=
\int_{\Omega}\theta(x)\, \d x,
$$
we have
$$
\hat\chi^n\to \theta\quad \text{ weakly-$\ast$ in }\ L^{\infty}(\Omega). 
$$
Then defining $\Omega^n\subset \Omega$ by 
$$
\Omega^{n}=\{ x\in \Omega\colon \chi^n(x)\neq \hat{\chi}^n(x) \text{ for a.e.~} x\in\Omega\},
$$
one obtain $|\Omega^n|\to 0$ as $n\to +\infty$, which along with   
the locality of the $H$-convergence (see  \cite[(ii) of Proposition 1]{Murat-Tartar}) yields $\kappa[\hat\chi^n]\mathbb{I}\overset{H}{\to} \kappa_{\rm hom}$, and therefore, $(\tilde\chi^n)$ turns out to be the desired sequence.
Hence the continuity of $\mathcal{E}$ and \eqref{eq:OP} ensure that
\begin{align*}
    \mathcal{E}_{\rm hom}(\theta,\kappa_{\rm hom})=\lim_{n\to +\infty}\mathcal{E}(\chi^n)\ge \inf_{\chi\in\mathcal{CD}}\mathcal{E}(\chi),
\end{align*}
which along with \eqref{eq:minimizer} yields \eqref{eq:mini-charac} and \eqref{eq:mini-charac2}.  

On the other hand, let $(\theta,\kappa_{\rm hom})\in\mathcal{RD}$ be a minimizer of \eqref{eq:RP}.
As already mentioned the above, \eqref{eq:RD} follows for some $\chi^n\in\mathcal{CD}$ and some $\kappa^n\in L^{\infty}(\Omega;\{\alpha,\beta\})$ such that $\kappa^n=\alpha(1-\chi^n)+\beta\chi^n$, and moreover,
$\mathcal{E}_{\rm hom}(\theta,\kappa_{\rm hom})=\lim_{n\to +\infty}\mathcal{E}(\chi^n)$ also holds, which implies that $(\chi^n)$ is a minimizing sequence in \eqref{eq:OP}. 
This completes the proof. 
\end{proof}

\begin{rmk}[Interpretation of Theorem \ref{T:existence}]\label{R:opt-d}
\rm We note that the following facts\/{\rm :}
\begin{itemize}
\item[(i)]
Theorem \ref{T:existence} asserts that at least one minimizer exists in the relaxation problem \eqref{eq:RP}, which gives the same minimum value as the original design problem \eqref{eq:OP}. However, there is no guarantee for the uniqueness of minimizers.
In particular, if we further add geometric constraints (see, e.g.,~\cite{AB93} for perimeter constraints) such that $\chi^n$ converges some characteristic function a.e.~in $\Omega$, the above proof ensures the existence of minimizers of the original design problem \eqref{eq:OP} (with geometric constraints). 

\item[(ii)]
As for the relaxation problem \eqref{eq:RP}, the volume fraction $\theta\in L^{\infty}(\Omega;[0,1])$ takes a value other than $\{0,1\}$ a.e.~in $\Omega$.
Thus there are intermediate sets that are neither the material with diffusion coefficient $\alpha>0$ (i.e.,~$\Omega_0\subset \Omega$) nor the material with diffusion coefficient $\beta>0$ (i.e.,~$\Omega_1\subset \Omega$), and optimal volume fractions are characterized by using intermediate sets.
In terms of the original design problem \eqref{eq:OP}, it is necessary to construct $(\theta,\kappa_{\rm hom})\in \mathcal{RD}$ that attains the value close to the minimum in \eqref{eq:OP} such that the so-called gray-scale problem is rarely raised. 
\end{itemize}
\end{rmk}

\subsection{Numerical algorithm for optimization of volume fractions}
In this section, we describe a method to construct candidates for optimal volume fractions in \eqref{eq:RP} numerically such that the minimum value of \eqref{eq:OP} is achieved.
In the rest of this paper, 
we adapt \(\|\cdot\|_{V}=(\|\nabla\cdot\|_{L^2(\Omega)}^2 + \|\cdot\|_{L^2(\partial \Omega)}^2)^{1/2}\) which is a norm in \(V=H^1(\Omega)\) equivalent to the usual one, and then
we set \(\boldsymbol{\beta}(w)=\boldsymbol{\sigma}|w|^{r-2}w\) (i.e.,~\(j(w)=\frac{\boldsymbol{\sigma}}{r}|w|^r\)), \(r\ge 2\) and write
$\kappa_{\rm hom}=\kappa$, $u_{\rm hom}=u$ and  
$\mathcal{E}_{\rm hom}(\theta,\kappa_{\rm hom})=\mathcal{E}_{\rm hom}(\kappa)$ for simplicity.
In addition, we assume that $f\in L^2(\Omega;\R_+)$ to get the following 
\begin{lem}[Nonnegativity of $u$]\label{L:nonnegative}
Let $u\in K$ be a unique weak solution to the {\rm(}homogenized{\rm)} state equation,
\begin{align}\label{eq:rE}
\begin{cases}
-\dv(\kappa \nabla u)=f\quad &\text{ in } \Omega,\\
-\kappa \nabla u\cdot \nu=\boldsymbol{\sigma}|u|^{r-2}u \quad &\text{ on } \partial \Omega.
\end{cases}
\end{align}
Suppose that \(f\ge 0\), \(\not\equiv 0\). Then it holds that $u\ge 0$ a.e.~in $\Omega$.
\end{lem}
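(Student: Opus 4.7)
My plan is to prove $u\ge 0$ a.e.\ in $\Omega$ by testing the variational inequality characterizing $u$ against $v=u^+:=\max\{u,0\}\in K$, which forces the negative part $u^-:=\max\{-u,0\}$ to vanish.

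\textbf{Step 1: admissibility of the test function.} First I would verify that $u^+\in K$. Since the positive-part truncation preserves $H^1$-regularity, $u^+\in V$. Moreover, because $j(w)=\frac{\boldsymbol{\sigma}}{r}|w|^r$ is monotone in $|w|$, we have $j(u^+)\le j(u)$ pointwise on $\partial\Omega$, hence $j(u^+)\in L^1(\partial\Omega)$ and $u^+\in K$. Consequently the choice $v=u^+$ is admissible in \eqref{var_ineq}, and $v-u=u^+-u=-u^-$ (up to sign conventions; $u=u^+-u^-$ so $v-u=-u^-\cdot(-1) = u^-$ when $v=u^+$, giving $v-u=u^-$). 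Careful: since $u=u^+-u^-$, we have $u^+-u=u^-$, so $v-u=u^-$.

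\textbf{Step 2: computing the three terms in the VI.} With $v=u^+$, I compute:
\begin{itemize}
\item $a(u,u^-)=\int_\Omega \kappa\,\nabla u\cdot\nabla u^-\,\mathrm{d}x$. Since $\nabla u^-=-\nabla u\,\chi_{\{u<0\}}$ a.e., I get $a(u,u^-)=-\int_\Omega \kappa\,|\nabla u^-|^2\,\mathrm{d}x$.
\item $J(u^+)-J(u)$. Using that $u^+$ and $u^-$ have disjoint supports on $\partial\Omega$, $|u|^r=(u^+)^r+(u^-)^r$, so $J(u^+)-J(u)=-\frac{\boldsymbol{\sigma}}{r}\int_{\partial\Omega}(u^-)^r\,\mathrm{d}\sigma$.
\item $\langle f,v-u\rangle=\int_\Omega f\,u^-\,\mathrm{d}x\ge 0$, since $f\ge 0$ and $u^-\ge 0$.
\end{itemize}

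\textbf{Step 3: combining.} Substituting into \eqref{var_ineq} yields
\[
-\int_\Omega \kappa\,|\nabla u^-|^2\,\mathrm{d}x - \frac{\boldsymbol{\sigma}}{r}\int_{\partial\Omega}(u^-)^r\,\mathrm{d}\sigma \;\ge\; \int_\Omega f\,u^-\,\mathrm{d}x\;\ge\;0.
\]
The left-hand side is non-positive and the right-hand side is non-negative, so both the bulk and boundary terms vanish: $\nabla u^-=0$ a.e.\ in $\Omega$ and $u^-=0$ a.e.\ on $\partial\Omega$. Since $\Omega$ is a (connected) Lipschitz domain, $u^-$ is a constant, and the boundary trace forces that constant to be zero. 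Hence $u^-\equiv 0$ a.e.\ in $\Omega$, i.e., $u\ge 0$ a.e.\ in $\Omega$.

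\textbf{Anticipated difficulty.} The only real subtlety is the verification that $u^+\in K$ and the sign bookkeeping in Step 2; everything else is a direct Stampacchia-type truncation argument, the boundary coercivity/monotonicity supplied by $j(w)=\frac{\boldsymbol{\sigma}}{r}|w|^r$ doing the work that an interior strictly positive reaction term would otherwise do. The hypothesis $f\not\equiv 0$ plays no role in establishing non-negativity itself---it is only needed if one subsequently wishes to exclude the trivial solution $u\equiv 0$---so I would not invoke it in this proof.
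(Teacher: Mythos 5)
Your proof is correct and takes essentially the same approach as the paper: both are Stampacchia-type truncation arguments that force $\nabla u^-=0$ in $\Omega$ and $u^-=0$ on $\partial\Omega$, whence $u^-\equiv 0$. The only cosmetic difference is that you test the variational inequality \eqref{var_ineq} with $v=u^+$ (producing the boundary term $\tfrac{\boldsymbol{\sigma}}{r}\int_{\partial\Omega}(u^-)^r\,\d\sigma$), while the paper tests the weak form \eqref{weak_form} with $\varphi=u^-$ (producing $\boldsymbol{\sigma}\int_{\partial\Omega}|u|^{r-2}|u^-|^2\,\d\sigma$); both terms have the right sign and both force the trace of $u^-$ to vanish, and your observation that $f\not\equiv 0$ is not needed for nonnegativity is also accurate.
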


\begin{proof}
Multiplying \(u^-:=\max(-u,0)\) by \eqref{eq:rE} and using integration by parts, we can derive that 
\begin{align*}
    0 \le \int_{\Omega} f(x)u^-(x)\, \d x 
    &= \int_{\Omega}\kappa(x) \nabla u(x) \cdot \nabla u^-(x)\, \d x + \boldsymbol{\sigma} \int_{\partial \Omega} |u|^{r-2} u(x) u^-(x)\, \d\sigma \displaybreak[0]\\[2mm]
    &=\int_{\{u\le 0\}}\kappa(x) \nabla u(x) \cdot \nabla (-u)(x)\, \d x + \boldsymbol{\sigma} \int_{u\le 0} |u|^{r-2} u(x) (-u)(x)\,  \d\sigma\displaybreak[0]\\[2mm]
    &= - \int_{\{u\le 0\}}\kappa(x) \nabla (-u)(x) \cdot \nabla (-u)(x) \,\d x - \boldsymbol{\sigma} \int_{u\le 0} |u(x)|^{r-2} (-u(x))^2\, \d\sigma\displaybreak[0]\\[2mm]
    &\le - \alpha' \| \nabla u^-\|_{L^2(\Omega)}^2 - \boldsymbol{\sigma} \int_{\partial \Omega} |u(x)|^{r-2}|u^-(x)|^2\, \d\sigma,
\end{align*}
which implies
$    \nabla u^- =  u^-|_{\partial \Omega} = 0$,
and therefore, \(u\ge 0\) a.e.~in \(\Omega\). This completes the proof.
\end{proof}

Thanks to Lemma \ref{L:nonnegative}, $|u|^{r-2}u$ can be described as $u^{r-1}$ below. 
We first derive the Fr\'echet derivative of $\mathcal{E}_{\rm hom}:\mathcal{M}(\alpha',\beta')\to\R$.
\begin{prop}[Sensitivity analysis for \eqref{eq:RP}]\label{P:d-sensitivity}
 Let $u\in K$ be a nonnegative weak solution to \eqref{eq:rE} and let $v\in K$ be a weak solution to the {\rm (}homogenized{\rm )} adjoint equation,
\begin{align}\label{eq:adE}
\begin{cases}
-\dv(\kappa \nabla v)=f\quad &\text{ in } \Omega,\\
-\kappa \nabla v\cdot \nu=\boldsymbol{\sigma}((r-1)u^{r-2}v+ru^{r-1}) \quad &\text{ on } \partial \Omega.
\end{cases}
\end{align}
Then $\mathcal{E}_{\rm hom}$ is differentiable at $\kappa$, and it holds that
\begin{equation}
\left\langle \mathcal{E}_{\rm hom}'(\kappa),h\right\rangle_{[L^{\infty}(\Omega)]^{d\times d}}
=
-\int_{\Omega}h(x)\nabla u(x)\cdot \nabla v(x)\, \d x
    \label{eq:d-sensitivity}
\end{equation}
for any $h\in [L^{\infty}(\Omega)]^{d\times d}$. 
\end{prop}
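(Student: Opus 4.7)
The plan is to use the classical adjoint-state method. First I would establish that the solution map $\kappa \mapsto u(\kappa)$ is Fr\'echet differentiable from $\mathcal{M}(\alpha',\beta')$ into $V$, with derivative $u'$ in the direction $h\in[L^\infty(\Omega)]^{d\times d}$ given as the unique weak solution in $V$ of the linearized problem
\begin{equation*}
\int_\Omega \kappa(x)\nabla u'(x)\cdot \nabla \varphi(x)\,\d x + \boldsymbol{\sigma}(r-1)\int_{\partial\Omega} u^{r-2}(x) u'(x)\varphi(x)\,\d\sigma = -\int_\Omega h(x)\nabla u(x)\cdot \nabla \varphi(x)\,\d x
\end{equation*}
for all $\varphi\in V$. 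Well-posedness of this problem follows from the Lax--Milgram theorem: the bilinear form on the left is symmetric, bounded, and coercive by Lemma \ref{poincare} combined with the uniform ellipticity of $\kappa$ and the nonnegativity of $u^{r-2}$ guaranteed by Lemma \ref{L:nonnegative}. The rigorous differentiability of the solution map is then obtained via the implicit function theorem applied to the nonlinear operator $F:\mathcal{M}(\alpha',\beta')\times V\to V^\ast$ corresponding to the weak form of \eqref{eq:rE}. For $r\ge 2$ and $u\ge 0$, the map $s\mapsto s^{r-1}$ has a continuous derivative on $[0,\infty)$, so the boundary Nemytskii operator is $C^1$ from $V$ into $V^\ast$ via trace and Sobolev embeddings, and the partial derivative $\partial_u F(\kappa,u)$ coincides with the coercive linearized operator above, hence is invertible.

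Second, I would test the state equation \eqref{eq:rE} with $\varphi=u$ to obtain the energy identity
\begin{equation*}
\mathcal{E}_{\rm hom}(\kappa)=\int_\Omega f(x)u(x)\,\d x - \boldsymbol{\sigma}\int_{\partial\Omega} u^r(x)\,\d\sigma.
\end{equation*}
Replacing $\kappa$ by $\kappa+th$ and differentiating at $t=0$ using step one yields
\begin{equation*}
\langle \mathcal{E}_{\rm hom}'(\kappa),h\rangle_{[L^\infty(\Omega)]^{d\times d}} = \int_\Omega f u'\,\d x - r\boldsymbol{\sigma}\int_{\partial\Omega} u^{r-1} u'\,\d\sigma,
\end{equation*}
which crucially no longer involves $\nabla u'$ in the interior, only the boundary trace of $u'$.

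Third, I would eliminate $u'$ via the adjoint equation \eqref{eq:adE}. Testing the adjoint equation with $\varphi=u'$ and the linearized equation with $\varphi=v$ produces two identities whose second-order bilinear terms coincide by the symmetry of $\kappa$. Subtracting them yields the pivotal identity
\begin{equation*}
r\boldsymbol{\sigma}\int_{\partial\Omega} u^{r-1}u'\,\d\sigma - \int_\Omega f u'\,\d x = \int_\Omega h(x)\nabla u(x)\cdot \nabla v(x)\,\d x,
\end{equation*}
in which the awkward boundary integrals $\boldsymbol{\sigma}(r-1)\int_{\partial\Omega} u^{r-2} u' v\,\d\sigma$ cancel exactly because the adjoint problem has been designed with that very coefficient. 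Substituting this into the expression from step two delivers the claimed formula \eqref{eq:d-sensitivity}.

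The main obstacle I anticipate is step one: the rigorous justification that $\kappa\mapsto u(\kappa)$ is Fr\'echet differentiable, since the nonlinear Robin boundary condition prevents a direct energy subtraction argument. The hypothesis $r\ge 2$ together with Lemma \ref{L:nonnegative} is essential here, as it both guarantees differentiability of $s\mapsto s^{r-1}$ at the relevant values and ensures the linearized boundary term has the correct sign to preserve coercivity. The remaining algebraic manipulations in steps two and three are straightforward once the symmetry of $\kappa$ is exploited.
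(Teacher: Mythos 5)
Your argument is correct and reaches \eqref{eq:d-sensitivity} by a route that is algebraically equivalent to, but organized differently from, the paper's. The paper introduces the Lagrangian $\mathcal{L}(\kappa,u,w_\kappa)$, identifies $\mathcal{E}_{\rm hom}(\kappa)=\mathcal{L}(\kappa,u,-v)$, and kills the terms $\partial_u\mathcal{L}$ and $\partial_{w_\kappa}\mathcal{L}$ by the choice of adjoint; this forces it to also establish differentiability of $\kappa\mapsto v_\kappa$ (Lemma \ref{L:diffv}). You instead differentiate the energy identity $\mathcal{E}_{\rm hom}(\kappa)=\int_\Omega fu\,\d x-\boldsymbol{\sigma}\int_{\partial\Omega}u^r\,\d\sigma$ and eliminate $u'$ by testing the adjoint equation with $u'$ and the linearized equation with $v$; your ``pivotal identity'' $r\boldsymbol{\sigma}\int_{\partial\Omega}u^{r-1}u'\,\d\sigma-\int_\Omega fu'\,\d x=\int_\Omega h\nabla u\cdot\nabla v\,\d x$ checks out, the $(r-1)u^{r-2}u'v$ boundary terms cancel exactly as you say, and the symmetry of $\kappa$ is used where it should be. A small advantage of your arrangement is that $v$ enters only as a fixed test function, so you never need the differentiability of $v_\kappa$, only that of $u_\kappa$ (the paper's Lemma \ref{L:diffu}, which you replace by an implicit-function-theorem argument; that is a legitimate alternative provided the boundary Nemytskii operator is shown to be $C^1$, which in turn needs $u\in L^\infty(\Omega)$ for general $r$).

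One point is stated too loosely: you claim coercivity of the linearized bilinear form $b(w,\varphi)=\int_\Omega\kappa\nabla w\cdot\nabla\varphi\,\d x+\boldsymbol{\sigma}(r-1)\int_{\partial\Omega}u^{r-2}w\varphi\,\d\sigma$ on $V$ from ``the nonnegativity of $u^{r-2}$'' together with Lemma \ref{poincare}. Nonnegativity alone only gives $b(w,w)\ge\alpha'\|\nabla w\|_{L^2(\Omega)}^2$, which is not coercive on $H^1(\Omega)$ (constants are in the kernel if $u$ vanishes on $\partial\Omega$), and Lemma \ref{poincare} requires control of $\|w\|_{L^2(\partial\Omega)}^2$, which the boundary term supplies only where $u^{r-2}$ is bounded below by a positive constant. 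This is precisely why the paper imposes \eqref{assumption_u}, namely $u\in L^\infty(\Omega)$ and $u\ge C>0$ on some $\Gamma_0\subset\partial\Omega$ with $|\Gamma_0|>0$, together with the modified Poincar\'e inequality of Remark \ref{hyp:u}. Your proof needs the same hypothesis at the same place (both for the well-posedness of the adjoint and linearized problems and for the invertibility of $\partial_uF$ in your implicit function theorem step); with \eqref{assumption_u} added explicitly, the argument is complete.
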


\begin{proof}
Define $\mathcal{L}:\mathcal{M}(\alpha',\beta')\times K\times K \to\R$ by
\begin{align*}
\lefteqn{\mathcal{L}(\kappa,u,w_\kappa)}\nonumber\\
&=
\int_{\Omega}\kappa(x)\nabla u(x)\cdot \nabla w_\kappa(x)\, \d x
-
\int_{\partial \Omega}\boldsymbol{\sigma}u^{r-1}(x)(u(x)-w_\kappa(x))\, \d \sigma
+
\int_\Omega f(x)(u(x)-w_\kappa(x))\, \d x, 
\end{align*}
where $\mathcal{M}(\alpha',\beta')\ni \kappa\mapsto w_\kappa\in K$ is a differentiable at $\kappa$. 
Note that, for any $h\in [L^{\infty}(\Omega)]^{d\times d}$,
\begin{align*}
\left\langle \mathcal{L}'(\kappa,u, w_\kappa),h\right\rangle_{[L^{\infty}(\Omega)]^{d\times d}}
=
\left\langle \partial_{\kappa}\mathcal{L}(\kappa,u,w_\kappa), h\right\rangle_{[L^{\infty}(\Omega)]^{d\times d}}
+
\left\langle \partial_{u}\mathcal{L}(\kappa,u,w_\kappa),u'h \right\rangle_V
+
\left\langle \partial_{w_\kappa}\mathcal{L}(\kappa,u,w_\kappa),w'_\kappa h \right\rangle_V.
\end{align*}
Here we used the fact that $\kappa\mapsto u=u_\kappa$ is differentiable (see Lemma \ref{L:diffu} below). 
We derive by the symmetry of $\kappa$ that, for any $\varphi\in K$,
\begin{align*}
\langle\partial_{u}\mathcal{L}(\kappa,u,w_\kappa),\varphi\rangle_{V}
&=
\int_{\Omega} \kappa(x)\nabla w_\kappa(x)\cdot \nabla \varphi(x)\, \d x\\
&\quad -\int_{\partial \Omega}\boldsymbol{\sigma}[ru^{r-1}(x)
-(r-1)u^{r-2}(x)w_\kappa(x)]\varphi(x)\,\d \sigma
+
\int_\Omega f(x)\varphi(x)\, \d x,
\end{align*}
whence follows 
$
\langle \mathcal{E}_{\rm hom}'(\kappa),h\rangle_{[L^{\infty}(\Omega)]^{d\times d}}
=
\langle \partial_{\kappa}\mathcal{L}(\kappa,u,-v), h\rangle_{[L^{\infty}(\Omega)]^{d\times d}}
$ due to the differentiability of $\kappa\mapsto v=v_\kappa$ (see Lemma \ref{L:diffv} below),
$\mathcal{E}_{\rm hom}(\kappa)=\mathcal{L}(\kappa,u,-v)$ and $\partial_{u}\mathcal{L}(\kappa,u,-v)=\partial_{w_\kappa}\mathcal{L}(\kappa,u,-v)=0$. Thus we obtain \eqref{eq:d-sensitivity}.
\end{proof}

\begin{rmk}[Existence and regularity of solutions to the adjoint equation]\label{hyp:u}
\rm
    The existence of a unique weak solution to \eqref{eq:adE} is assured by some natural assumption.
    Indeed, if the weak solution \(u\in K\) to \eqref{eq:rE} satisfies 
    \begin{equation}\label{assumption_u}
        u\in L^\infty(\Omega), \qquad u(x)\ge C>0 ~~(x\in\Gamma_0)
    \end{equation}
    for some \(C>0\) and \(\Gamma_0\subset\partial\Omega\) with \(|\Gamma_0|>0\), then we can deduce that \eqref{eq:adE} possesses a unique weak solution \(v\in H^1(\Omega)\).
    This result comes from the following inequality;
    there exists \(C>0\) such that
    \begin{equation*}
        \|v\|_{L^2(\Omega)}^2 \le C \left(  \|\nabla v\|_{L^2(\Omega)}^2 + \int_{\Gamma_0} \tilde{\kappa}v^2 \d \sigma   \right)
    \end{equation*}
    for any \(v\in H^1(\Omega)\), where \(\tilde{\kappa}\in L^{\infty}(\partial\Omega)\) which satisfies \(\tilde{\kappa}(x)\ge C>0\) (\(x\in \Gamma_0\)) for some \(C>0\) and \(\Gamma_0\in\partial\Omega\) with \(|\Gamma_0|>0\).
    The above inequality can be proved in a similar way to the proof of Lemma \ref{poincare} with slight modification.
    By virtue of the assumptions on \(u\) and this inequality, the usual method by Lax--Milgram theorem can be applied to \eqref{eq:adE} in order to show the existence of a weak solution.
    Unfortunately, it is difficult to prove the above assumption \eqref{assumption_u} on \(u\) rigorously.
    However, if \(\Omega\) and \(\kappa\) are sufficiently smooth, the solution \(u\) belongs to \(H^2(\Omega)\) (for detail, see \cite{B71}),
    and we can deduce that \(u\) satisfies the above conditions with \(\Gamma_0=\partial\Omega\).
    Therefore, our assumptions \eqref{assumption_u} are quite natural, and after this, we always impose \eqref{assumption_u} on the solution of \eqref{eq:rE} implicitly whenever we consider a solution \(v\) to \eqref{eq:adE}.
    Moreover, in this setting, we can derive \(v\in H^1(\Omega)\cap L^\infty(\Omega)\) and in particular \(v\in K\).
\end{rmk}

As for $u'=u_\kappa'$, we have the following
\begin{lem}[Differentiablity of $u$ with respect to $\kappa$] \label{L:diffu}
Suppose that \eqref{assumption_u}.
Then the nonnegative weak solution $\kappa\mapsto u=u_\kappa$ of \eqref{eq:rE} is differentiable at $\kappa$ and  
    $
    u_\kappa'h= \tilde{u} 
    $
    for the direction $h\in [L^{\infty}(\Omega)]^{d\times d}$. Here $\tilde{u} \in V$ satisfies
\begin{align}\label{u'eq}
   \int_{\Omega}\kappa(x)\nabla \tilde{u}(x)\cdot \nabla \varphi(x)\, \d x
    +
    \int_{\partial \Omega}\boldsymbol{\sigma}(r-1)u^{r-2}(x)\tilde{u}(x)\varphi(x)\, \d \sigma
    =
    -
   \int_{\Omega}h(x)\nabla u(x)\cdot \nabla \varphi(x)\, \d x
\end{align}
for all $\varphi\in V$.
\end{lem}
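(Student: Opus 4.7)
The plan is to realize $u_\kappa'h$ as the limit of difference quotients. Fix $\kappa\in \mathcal{M}(\alpha',\beta')$ and $h\in [L^\infty(\Omega)]^{d\times d}$, and for $|t|$ sufficiently small let $u_t:=u_{\kappa+th}\in K$ denote the nonnegative weak solution of \eqref{eq:rE} with diffusion coefficient $\kappa+th$ (which still lies in $\mathcal{M}(\alpha'/2,2\beta')$ for small $t$, so Theorem \ref{T:well-posedness} applies). Set $w_t:=(u_t-u)/t$. Subtracting the weak forms of $u_t$ and $u=u_\kappa$ against any $\varphi\in V$ yields
\begin{equation}\label{eq:wt-eq}
\int_\Omega \kappa\nabla w_t\cdot\nabla\varphi\,\d x + \int_{\partial\Omega} \boldsymbol{\sigma}\,\frac{u_t^{r-1}-u^{r-1}}{t}\,\varphi\,\d\sigma = -\int_\Omega h\,\nabla u_t\cdot\nabla\varphi\,\d x.
\end{equation}
By the Mean Value Theorem, $(u_t^{r-1}-u^{r-1})/t=(r-1)\xi_t^{r-2}w_t$ for some $\xi_t$ between $u$ and $u_t$, with $\xi_t\ge0$.

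The first step is to establish continuity $u_t\to u$ strongly in $V$ as $t\to 0$: subtract the weak forms, test with $u_t-u$, use the uniform ellipticity of $\kappa+th$ and monotonicity of $w\mapsto\boldsymbol{\sigma} w^{r-1}$ on $\mathbb{R}_+$, together with \eqref{hyp:J} and Lemma \ref{poincare}, to deduce $\|u_t-u\|_V=O(t)$. This in particular gives a uniform $L^\infty(\Omega)$ bound on $u_t$ via \eqref{assumption_u} and standard regularity, hence $\xi_t$ lies in a uniform $L^\infty$-ball and $\xi_t\to u$ a.e. In the next step, test \eqref{eq:wt-eq} with $\varphi=w_t$: coercivity of the left-hand side (note $\xi_t^{r-2}w_t^2\ge0$) combined with the uniform bound on $\nabla u_t$ yields $\|w_t\|_V\le C$ independently of $t$. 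Thus there exist a subsequence $t_n\to 0$ and $\tilde u\in V$ with $w_{t_n}\rightharpoonup \tilde u$ in $V$ and $w_{t_n}\to \tilde u$ strongly in $L^2(\partial\Omega)$ by the compactness of the trace.

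Passing to the limit in \eqref{eq:wt-eq} is the main technical step. For the left-hand boundary integral, write $\xi_{t_n}^{r-2}w_{t_n}\varphi = (\xi_{t_n}^{r-2}-u^{r-2})w_{t_n}\varphi + u^{r-2}w_{t_n}\varphi$; the first piece vanishes since $\xi_{t_n}^{r-2}\to u^{r-2}$ in $L^q(\partial\Omega)$ for every $q<\infty$ (by dominated convergence, using the uniform $L^\infty$ bound on $\xi_t$ — here $r\ge 2$ makes the power harmless), while $w_{t_n}\varphi$ is bounded in $L^2(\partial\Omega)$; the second piece passes by strong-weak convergence. The right-hand side converges to $-\int_\Omega h\,\nabla u\cdot\nabla\varphi\,\d x$ by strong convergence of $\nabla u_t$. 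Hence $\tilde u$ solves \eqref{u'eq}.

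Finally, the limit equation \eqref{u'eq} is a linear Robin-type problem whose bilinear form is coercive on $V$ (Lemma \ref{poincare} together with the Remark \ref{hyp:u} bound $u\ge C>0$ on $\Gamma_0\subset\partial\Omega$ guarantees coercivity, as the boundary coefficient $\boldsymbol{\sigma}(r-1)u^{r-2}$ is bounded below by a positive constant on $\Gamma_0$); therefore its solution is unique. This uniqueness forces the whole family $w_t$ to converge to $\tilde u$ weakly in $V$, and then, by testing \eqref{eq:wt-eq} with $w_t-\tilde u$ and using the convergences established above, convergence upgrades to strong convergence in $V$, yielding Fr\'echet differentiability. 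The main obstacle is controlling the nonlinear boundary term $(u_t^{r-1}-u^{r-1})/t$; this is overcome by combining the $L^\infty$-bound from assumption \eqref{assumption_u} with the compactness of the trace embedding $V\hookrightarrow L^2(\partial\Omega)$, which turns a priori weak convergences into the strong convergences needed to identify the limit.
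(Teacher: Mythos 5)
Your route is genuinely different from the paper's: you realize $\tilde u$ as a limit of difference quotients $w_t=(u_{\kappa+th}-u_\kappa)/t$ along the ray $t\mapsto\kappa+th$, and identify the limit via trace compactness and a mean-value rewriting of the boundary nonlinearity. The paper never forms difference quotients; it writes down \eqref{u'eq} first, then directly tests the equation satisfied by the full remainder $u_{\kappa+h}-u_\kappa-\tilde u$ against the remainder itself and obtains the quantitative bound $\|u_{\kappa+h}-u_\kappa-\tilde u\|_V\le C\|h\|_{L^\infty(\Omega)}^2$. Your approach is more explicit about two points the paper glosses over: the solvability and uniqueness of the linearized problem (which you justify by coercivity coming from \eqref{assumption_u}), and the mechanism controlling the boundary term (mean value theorem plus dominated convergence, versus the paper's unproved estimate of that integral).

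There is, however, a genuine gap at the end. What your compactness argument delivers is that for each \emph{fixed} direction $h$, $w_t\to\tilde u$ in $V$ as $t\to 0$, i.e.~Gateaux differentiability along the ray. The lemma asserts, and Proposition \ref{P:d-sensitivity} uses, differentiability at $\kappa$ in the Fr\'echet sense: $\|u_{\kappa+h}-u_\kappa-\tilde u\|_V=o(\|h\|_{L^\infty(\Omega)})$ uniformly as $\|h\|_{L^\infty(\Omega)}\to 0$ over all directions. A subsequence-extraction argument cannot produce this uniformity, since the rate at which your error terms vanish depends on the direction through the compactness step; the closing sentence ``convergence upgrades to strong convergence in $V$, yielding Fr\'echet differentiability'' conflates the two notions. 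To close the gap you must either make every estimate quantitative in $\|h\|_{L^\infty(\Omega)}$ --- which is precisely the paper's strategy, culminating in the $O(\|h\|_{L^\infty(\Omega)}^2)$ remainder bound --- or prove continuity of the Gateaux derivative in $\kappa$ and invoke the standard criterion for upgrading to Fr\'echet differentiability. A secondary issue: your mean-value/dominated-convergence step needs a uniform $L^\infty$ bound on $u_{\kappa+th}$ for small $t$, whereas \eqref{assumption_u} is only imposed on $u_\kappa$; since $\kappa+th$ is merely in $L^\infty$, ``standard regularity'' does not supply this, so the hypothesis should be imposed (or verified) uniformly in $t$. The paper's own boundary estimate tacitly needs the same control for $u_{\kappa+h}$, so this is a shared weakness rather than a defect of your approach alone.
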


\begin{proof}
Let $\kappa[s]=\kappa+sh$ for $s>0$ and let $u[s]$ be a solution to \eqref{eq:rE} with $\kappa=\kappa[s]$.  
In this proof, we set $\boldsymbol{\sigma}=1$ for simplicity.
Differentiating with respect to $s>0$ in the weak form of \eqref{eq:rE} with $\kappa=\kappa[s]$, we have   
\begin{align*}
    \int_{\Omega}\kappa[s](x)\nabla u'[s](x)\cdot \nabla \varphi(x)\, \d x
    +
    \int_{\partial \Omega}(r-1)u^{r-2}[s](x)u'[s](x)\varphi(x)\, \d \sigma
    =
    -
   \int_{\Omega}h\nabla u[s](x)\cdot \nabla \varphi(x)\, \d x,
\end{align*}
which coincides with \eqref{u'eq} as $\tilde{u}=u'[s]$ and $s=0$.
Noting that $\kappa=\kappa[0]=\kappa[1]-h$, $u_{\kappa+h}=u[1]$ and $u_{\kappa}=u[0]$, we observe that, for any $\varphi\in V$,
\begin{align}
 \lefteqn{  \int_{\Omega}\kappa(x)\nabla(u_{\kappa+h}(x)-u_\kappa(x)-\tilde{u}(x))\cdot \nabla \varphi(x)\, \d x}
 \displaybreak[0]
\nonumber\\
&\quad+
    \int_{\partial \Omega}
    (u_{\kappa+h}^{r-1}(x)-u_\kappa^{r-1}(x)-(r-1)u_\kappa^{r-2}(x)\tilde{u}(x))\varphi(x)\, \d \sigma\displaybreak[0]\nonumber\\
&=
\int_{\Omega}[(\kappa[1](x)-h(x))\nabla u[1](x)-\kappa[0](x)\nabla u[0](x)-\kappa(x)\nabla \tilde{u}(x)]\cdot \nabla \varphi(x)\, \d x\displaybreak[0]\nonumber\\
&\quad+
    \int_{\partial \Omega}
    (u[1]^{r-1}(x)-u[0]^{r-1}(x)-(r-1)u[0]^{r-2}(x)\tilde{u}(x))\varphi(x)\, \d \sigma\displaybreak[0]\nonumber\\
& =
    -
   \int_{\Omega}h(x)\nabla (u[1](x)-u[0](x))\cdot \nabla \varphi(x)\, \d x
\le
    \|h\|_{L^{\infty}(\Omega)}\|\nabla(u_{\kappa+h}-u_\kappa)\|_{L^2(\Omega)}\|\nabla \varphi\|_{L^2(\Omega)}.
    \label{eq:lem3.4-1}
\end{align}
As for the upper bound of $\|\nabla(u_{\kappa+h}-u_\kappa)\|_{L^2(\Omega)}$, we deduce from the same argument that 
\begin{align}
\lefteqn{\int_{\Omega}\kappa(x)\nabla(u_{\kappa+h}(x)-u_\kappa(x))\cdot \nabla (u_{\kappa+h}(x)-u_\kappa(x))\, \d x
+
\int_{\partial \Omega}
  (u_{\kappa+h}^{r-1}(x)-u_\kappa^{r-1}(x))(u_{\kappa+h}(x)-u_\kappa(x))\, \d \sigma
}\nonumber\\
&=
  -
   \int_{\Omega}h(x)\nabla u_{\kappa+h}(x)\cdot \nabla (u_{\kappa+h}(x)-u_\kappa(x))\, \d x
    \le 
   \|h\|_{L^{\infty}(\Omega)}\|\nabla u_{\kappa+h}\|_{L^2(\Omega)}\|\nabla (u_{\kappa+h}-u_{\kappa})\|_{L^2(\Omega)},
   \label{eq:bounds-ukh}
\end{align}
which along with the boundedness of $(u_{k+h})$ in $V$ and the uniform ellipticity of $\kappa$ yields 
$$
\|\nabla(u_{\kappa+h}-u_\kappa)\|_{L^2(\Omega)}
\le C\|h\|_{L^{\infty}(\Omega)}.
$$
Hence, setting $\varphi=u_{\kappa+h}-u_\kappa-\tilde{u}$ in  \eqref{eq:lem3.4-1}, we have
\begin{align}
    &\|\nabla(u_{\kappa+h}-u_\kappa-\tilde{u})\|_{L^2(\Omega)}^2
    +
 \int_{\partial \Omega}
    (u_{\kappa+h}^{r-1}(x)-u_\kappa^{r-1}(x)-(r-1)u_\kappa^{r-2}(x)\tilde{u}(x))(u_{\kappa+h}(x)-u_\kappa(x)-\tilde{u}(x))\, \d \sigma\nonumber\\
    &\quad \le C\|h\|_{L^{\infty}(\Omega)}^2 \|\nabla(u_{\kappa+h}-u_\kappa-\tilde{u})\|_{L^2(\Omega)}.
    \label{eq:nablah}
\end{align}
By noting that the integrand of the second term of the left-hand side in \eqref{eq:nablah} is written as
\begin{align}
    \lefteqn
    {(u_{\kappa+h}^{r-1}-u_\kappa^{r-1}-(r-1)u_\kappa^{r-2}\tilde{u})(u_{\kappa+h}-u_\kappa-\tilde{u})}\nonumber\\
    &=
(u_{\kappa+h}^{r-1}-u_\kappa^{r-1}-(r-1)u_\kappa^{r-2}\tilde{u}-(u_{\kappa+h}-u_\kappa-\tilde{u}))(u_{\kappa+h}-u_\kappa-\tilde{u})
+
(u_{\kappa+h}-u_\kappa-\tilde{u})^2.\label{eq:boundaryest1-1}
\end{align}
The integral of the first term in the last line can be estimated as follows\/{\rm :}
\begin{align}
&\int_{\partial\Omega}
(u_{\kappa+h}^{r-1}(x)-u_\kappa^{r-1}(x)-(r-1)u_\kappa^{r-2}(x)\tilde{u}(x)-(u_{\kappa+h}(x)-u_\kappa(x)-\tilde{u}(x)))(u_{\kappa+h}(x)-u_\kappa(x)-\tilde{u}(x))\, \d \sigma\nonumber\\
&\quad \le 
C\|h\|_{L^{\infty}(\Omega)}^2 \|\nabla(u_{\kappa+h}-u_\kappa-\tilde{u})\|_{L^2(\Omega)}.
\label{eq:boundaryest1-2}
\end{align}
Hence one can derive that
\begin{align}
    \|\nabla(u_{\kappa+h}-u_\kappa-\tilde{u})\|_{L^2(\Omega)}\le C\|h\|_{L^{\infty}(\Omega)}^2,
\label{eq:nablauest}
\end{align}
which together with \eqref{eq:nablah}, \eqref{eq:boundaryest1-1} and \eqref{eq:boundaryest1-2} yields 
\begin{equation}
\|u_{\kappa+h}-u_\kappa-\tilde{u}\|_{L^2(\partial\Omega)}\le C\|h\|_{L^{\infty}(\Omega)}^2.
\label{eq:boundaryh}    
\end{equation}
Combining \eqref{eq:nablauest} with \eqref{eq:boundaryh}, we conclude that
$$
\lim_{\|h\|_{L^{\infty}(\Omega)}\to 0_+}\frac{\|u_{k+h}-u_k-\tilde{u}\|_{V}}{\|h\|_{L^{\infty}(\Omega)}}=0,
$$
which completes the proof.
\end{proof}

By the same argument as in the proof of Lemma \ref{L:diffu}, we have the following 
\begin{lem}[Differentiablity of $v$ with respect to $\kappa$] \label{L:diffv}
Suppose that \eqref{assumption_u}. 
Let $u\in K$ and $\tilde{u}\in V$ be weak solutions to \eqref{eq:rE} and \eqref{u'eq}, respectively.
Then the weak solution $\kappa\mapsto v=v_\kappa\in K$ of \eqref{eq:adE} is differentiable at $\kappa$ and  
    $
v_\kappa'h= \tilde{v} 
    $
    for the direction $h\in [L^{\infty}(\Omega)]^{d\times d}$. Here $\tilde{v}\in V$ satisfies
    \begin{align}
    \label{eq:adv}
   \lefteqn{\int_{\Omega}\kappa(x)\nabla \tilde{v}(x)\cdot \nabla \varphi(x)\, \d x}\nonumber\\
    &\quad +\boldsymbol{\sigma}
    \int_{\partial \Omega}
    [
    r(r-1)u^{r-2}(x)\tilde{u}(x)
    +(r-1)(r-2)u^{r-3}(x)\tilde{u}(x)v(x)
    +(r-1)u^{r-2}\tilde{v}(x)
    ]\varphi(x)\, \d \sigma\nonumber\\
    &=
    -
   \int_{\Omega}h\nabla v(x)\cdot \nabla \varphi(x)\, \d x
\end{align}
for all $\varphi\in V$.
\end{lem}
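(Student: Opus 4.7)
The plan is to adapt the four-step scheme from the proof of Lemma \ref{L:diffu} to the adjoint equation \eqref{eq:adE}, with the additional complication that the boundary coefficients now depend on \(u\) (which itself varies with \(\kappa\)) as well as on \(v\). I would set \(\kappa[s]=\kappa+sh\), \(u[s]=u_{\kappa[s]}\), \(v[s]=v_{\kappa[s]}\), and differentiate the weak form of \eqref{eq:adE} in \(s\) at \(s=0\). Using Lemma \ref{L:diffu} to compute \(u[s]'|_{s=0}=\tilde u\), the resulting identity is exactly \eqref{eq:adv}. The well-posedness of \eqref{eq:adv} in \(V\) then follows from Lax--Milgram applied to the bilinear form
\[
B_{u}(z,\varphi):=\int_\Omega\kappa(x)\nabla z(x)\cdot\nabla\varphi(x)\,\d x+\boldsymbol{\sigma}(r-1)\int_{\partial\Omega}u^{r-2}(x)z(x)\varphi(x)\,\d\sigma,
\]
which is coercive on \(V\) by the weighted Poincar\'e-type inequality in Remark \ref{hyp:u}; the right-hand side of \eqref{eq:adv} defines a continuous linear functional thanks to \(\tilde u\in V\), \(u,v\in L^\infty(\Omega)\), and the trace theorem.

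To rigorously identify \(v_\kappa'h=\tilde v\), I would first establish the Lipschitz pre-bound \(\|v_{\kappa+h}-v_\kappa\|_V\le C\|h\|_{L^\infty(\Omega)}\) in the spirit of \eqref{eq:bounds-ukh}. Subtracting the weak forms of \eqref{eq:adE} at parameters \(\kappa+h\) and \(\kappa\), testing with \(v_{\kappa+h}-v_\kappa\), and isolating \(B_{u_\kappa}(v_{\kappa+h}-v_\kappa,v_{\kappa+h}-v_\kappa)\) on the left, the remainder is controlled by combining \(\|u_{\kappa+h}-u_\kappa\|_V\le C\|h\|_{L^\infty(\Omega)}\) with the boundedness of \(u_{\kappa+h}\) and \(v_{\kappa+h}\) on \(\partial\Omega\) and the trace theorem. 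I then set \(\delta:=v_{\kappa+h}-v_\kappa-\tilde v\) and \(\eta:=u_{\kappa+h}-u_\kappa-\tilde u\), subtract \eqref{eq:adE} at both parameters from \eqref{eq:adv}, and perform a second-order Taylor expansion of \(w\mapsto w^{r-1}\) and \(w\mapsto w^{r-2}\) about \(u_\kappa\) (valid thanks to the lower bound \(u_\kappa\ge C>0\) on \(\Gamma_0\) in \eqref{assumption_u}). This leads to an equation \(B_{u_\kappa}(\delta,\varphi)=R(h,\varphi)\) whose remainder satisfies \(|R(h,\varphi)|\le C\|h\|_{L^\infty(\Omega)}^2\|\varphi\|_V\) once Lemma \ref{L:diffu} is invoked to absorb the linear-in-\(\eta\) contribution via \(\|\eta\|_V\le C\|h\|_{L^\infty(\Omega)}^2\). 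Testing with \(\varphi=\delta\) and using coercivity yields \(\|\delta\|_V\le C\|h\|_{L^\infty(\Omega)}^2\), which is the desired differentiability.

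The main obstacle is the Lipschitz bound on \(v_{\kappa+h}-v_\kappa\). In Lemma \ref{L:diffu}, the monotonicity of \(w\mapsto w^{r-1}\) provided the sign of the boundary term for free, but here \(v\) is not sign-definite and the boundary coupling cannot simply be discarded; coercivity of \(B_{u_\kappa}\) is available only through Remark \ref{hyp:u} and the lower bound \(u_\kappa\ge C>0\) on \(\Gamma_0\). A closely related subtlety is that the Taylor expansion of \(w\mapsto w^{r-2}\) involves \((r-2)(r-3)w^{r-4}\), which blows up at \(w=0\) when \(r<4\); once more, the strict positivity of \(u_\kappa\) encoded in \eqref{assumption_u} is what allows the second-order remainder to be bounded uniformly on \(\partial\Omega\).
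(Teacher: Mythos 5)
Your proposal follows essentially the same route as the paper's proof: differentiate the weak form of \eqref{eq:adE} along $\kappa[s]=\kappa+sh$ to identify \eqref{eq:adv}, establish the Lipschitz pre-bound $\|v_{\kappa+h}-v_\kappa\|_V\le C\|h\|_{L^\infty(\Omega)}$ as in \eqref{eq:bounds-ukh}, and then estimate $\delta=v_{\kappa+h}-v_\kappa-\tilde v$ by testing the subtracted equation with $\delta$ and absorbing the $\eta=u_{\kappa+h}-u_\kappa-\tilde u$ contribution via Lemma \ref{L:diffu}. You are in fact somewhat more explicit than the paper about the points it leaves implicit (Lax--Milgram for the well-posedness of \eqref{eq:adv}, the coercivity of the boundary form via Remark \ref{hyp:u}, and the role of the lower bound in \eqref{assumption_u} in controlling the $u^{r-3}$ and $u^{r-4}$ factors), which is a welcome addition rather than a deviation.
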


\begin{proof}
Let $\kappa[s]=\kappa+sh$ for $s>0$ and $\boldsymbol{\sigma}=1$ for simplicity. Let $u[s]$ and $v[s]$ be weak solutions to \eqref{eq:rE} with $\kappa=\kappa[s]$ and \eqref{eq:adE} with $\kappa=\kappa[s]$, respectively. Then, by differentiating with respect to $s>0$ in the weak form of \eqref{eq:adE}, we obtain \eqref{eq:adv} as $u'[s]=\tilde{u}$, $v'[s]=\tilde{v}$ and $s=0$. 
Furthermore, we get 
$
\|\nabla(v_{k+h}-v_k)\|_{L^2(\Omega)}\le C \|h\|_{L^{\infty}(\Omega)}$
as in \eqref{eq:bounds-ukh}.
Here we used the fact that $\|\nabla v_{k+h}\|_{L^2(\Omega)}\le C$ by virtue of \eqref{assumption_u} (see Remark \ref{hyp:u}).
Then we observe that, for any $\varphi\in V$,
\begin{align}
&\int_{\Omega}\kappa(x)\nabla(v_{\kappa+h}(x)-v_\kappa(x)-\tilde{v}(x))\cdot \nabla(v_{\kappa+h}(x)-v_\kappa(x)-\tilde{v}(x))\, \d x\displaybreak[0]\nonumber\\
&\quad+
(r-1)    \int_{\partial \Omega}[u_{\kappa+h}^{r-2}(x)
    v_{\kappa+h}(x)-u_\kappa^{r-2}(x)v_\kappa(x)-u_\kappa^{r-2}(x)\tilde{v}(x)](v_{\kappa+h}(x)-v_\kappa(x)-\tilde{v}(x))\, \d \sigma\displaybreak[0]\nonumber\\
&\quad-
\int_{\partial \Omega}[
 r(r-1)u_\kappa^{r-2}(x)\tilde{u}(x)
    +(r-1)(r-2)u_\kappa^{r-3}(x)\tilde{u}(x)v_\kappa(x)
](v_{\kappa+h}(x)-v_\kappa(x)-\tilde{v}(x))\, \d \sigma\displaybreak[0]\nonumber\\
&=
    -
   \int_{\Omega}h(x)\nabla (v_{k+h}(x)-v_k(x))\cdot \nabla (v_{k+h}(x)-v_k(x)-\tilde{v}(x))\, \d x\nonumber\\
&\le
    \|h\|_{L^{\infty}(\Omega)}\|\nabla(u_{\kappa+h}-u_\kappa)\|_{L^2(\Omega)}\|\nabla (v_{k+h}-v_k-\tilde{v})\|_{L^2(\Omega)}
    \le 
    C\|h\|_{L^{\infty}(\Omega)}^2\|\nabla (v_{k+h}-v_k-\tilde{v})\|_{L^2(\Omega)}.
\nonumber
\end{align}
Here we note that the integrand of the second line is written as
\begin{align*}
&[u_{\kappa+h}^{r-2}
    v_{\kappa+h}-u_\kappa^{r-2}v_\kappa-u_\kappa^{r-2}\tilde{v}](v_{\kappa+h}-v_\kappa-\tilde{v})\\
&\quad=
[(u_{\kappa+h}^{r-2}
    v_{\kappa+h}-u_\kappa^{r-2}v_\kappa-u_\kappa^{r-2}\tilde{v})
-(v_{\kappa+h}-v_\kappa-\tilde{v})](v_{\kappa+h}-v_\kappa-\tilde{v})
+
    (v_{\kappa+h}-v_\kappa-\tilde{v})^2.
\end{align*}
Thus we see by 
the same argument as in the proof of Lemma \ref{L:diffu} and 
the uniform ellipticity of $\kappa$ that
\begin{align}
\|
\nabla(v_{\kappa+h}-v_\kappa-\tilde{v})\|_{L^2(\Omega)}^2+
\|v_{\kappa+h}-v_\kappa-\tilde{v}\|_{L^2(\partial\Omega)}^2
\le 
C\|h\|_{L^{\infty}(\Omega)}^2
\|\nabla (v_{k+h}-v_k-\tilde{v})\|_{L^2(\Omega)},
\nonumber
\end{align}
which implies that 
$
\|
v_{\kappa+h}-v_\kappa-\tilde{v}\|_{V}\le 
C\|h\|_{L^{\infty}(\Omega)}^2$, and hence,
$$
\lim_{\|h\|_{L^{\infty}(\Omega)}\to 0_+}\frac{\|v_{k+h}-v_k-\tilde{v}\|_{V}}{\|h\|_{L^{\infty}(\Omega)}}=0.
$$
This completes the proof.
\end{proof}

We next observe the relation between $u$ and $v$.
\begin{prop}[Difference in the gradients of state and adjoint equations]
\label{P:state-ad}
Assume that \eqref{assumption_u}.
Let $u\in K$ and $v\in V$ be weak solutions to \eqref{eq:rE} and \eqref{eq:adE}, respectively.
Then it holds that
\begin{equation*}
\|\nabla (u-v)\|_{L^2(\Omega)}^2\le 
\frac{r-1}{\alpha'} \sqrt{\frac{2}{\alpha_0}} \|f\|_{L^2(\Omega)}
\left\{ \frac{1}{2\alpha_0} \|f\|_{L^2(\Omega)}^2 + \frac{r-2}{2} \boldsymbol{\sigma} |\partial \Omega| \right\}^{\frac{1}{2}},
\end{equation*}
where \(\alpha_0 := \min(\alpha',\frac{r}{2}\boldsymbol{\sigma})\). 
\end{prop}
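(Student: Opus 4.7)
The plan is to derive the equation satisfied by $w := u - v$, test it with $w$ itself, and then bound the resulting boundary norm of $u$ via the energy identity for the state equation \eqref{eq:rE}.

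First, subtracting \eqref{eq:adE} from \eqref{eq:rE}, the difference $w$ satisfies $-\dv(\kappa \nabla w) = 0$ in $\Omega$ together with $-\kappa \nabla w \cdot \nu = -(r-1)\boldsymbol{\sigma}\, u^{r-2}(u+v)$ on $\partial\Omega$, using the algebraic identity $u^{r-1} - (r-1)u^{r-2}v - r u^{r-1} = -(r-1)u^{r-2}(u+v)$. Testing the weak form with $\varphi = w$ and noting $(u+v)(u-v) = u^{2} - v^{2}$ yields the key identity
\begin{equation*}
\int_\Omega \kappa\,|\nabla w|^2 \, \d x \;=\; (r-1)\,\boldsymbol{\sigma} \int_{\partial\Omega} u^{r-2}\bigl(u^{2} - v^{2}\bigr)\, \d\sigma.
\end{equation*}
Since $u \ge 0$ by Lemma \ref{L:nonnegative} and $u^{r-2} v^{2} \ge 0$, discarding the $v^{2}$-term produces
\begin{equation*}
\alpha'\, \|\nabla w\|_{L^2(\Omega)}^{2} \;\le\; (r-1)\,\boldsymbol{\sigma}\, \|u\|_{L^{r}(\partial\Omega)}^{r},
\end{equation*}
which sidesteps any need for pointwise control of the (possibly sign-changing) adjoint variable $v$.

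Next, I test \eqref{eq:rE} with $u$ to obtain the energy identity $\int_\Omega \kappa|\nabla u|^{2} + \boldsymbol{\sigma}\|u\|_{L^{r}(\partial\Omega)}^{r} = \int_\Omega f u$. Dropping the nonnegative gradient term gives $\boldsymbol{\sigma}\|u\|_{L^{r}(\partial\Omega)}^{r} \le \int f u \le \|f\|_{L^{2}(\Omega)} \|u\|_{V}$, so the bound from the first paragraph reduces to $\alpha' \|\nabla w\|_{L^{2}(\Omega)}^{2} \le (r-1)\,\|f\|_{L^{2}(\Omega)}\, \|u\|_{V}$. To bound $\|u\|_{V}$, I would apply the pointwise Young inequality $u^{r} \ge \tfrac{r}{2} u^{2} - \tfrac{r-2}{2}$ (valid for $u \ge 0$ and $r \ge 2$) on $\partial\Omega$ to the same energy identity; with $\alpha_0 = \min(\alpha', \tfrac{r}{2}\boldsymbol{\sigma})$, this gives $\alpha_0 \|u\|_{V}^{2} \le \int f u + \tfrac{r-2}{2}\boldsymbol{\sigma}\,|\partial\Omega|$. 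A second application of Young with parameter $\alpha_0$ absorbs $\tfrac{\alpha_0}{2} \|u\|_{V}^{2}$ to the left and produces
\begin{equation*}
\|u\|_{V}^{2} \;\le\; \tfrac{2}{\alpha_0}\!\left( \tfrac{1}{2\alpha_0}\|f\|_{L^{2}(\Omega)}^{2} + \tfrac{r-2}{2}\boldsymbol{\sigma}\,|\partial\Omega| \right).
\end{equation*}
Substituting this back into $\alpha'\|\nabla w\|_{L^{2}(\Omega)}^{2} \le (r-1)\|f\|_{L^{2}(\Omega)}\|u\|_{V}$ and dividing by $\alpha'$ yields exactly the asserted estimate.

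The cleanest move is the sign-discarding in the $w$-test, which collapses the awkward quantity $u^{2} - v^{2}$ into the manageable $u^{r}$. The main technical obstacle is the a priori bound on $\|u\|_{V}$: the pointwise Young device is precisely what generates the remainder $\tfrac{r-2}{2}\boldsymbol{\sigma}|\partial\Omega|$ and forces the specific definition of $\alpha_0$ as the minimum of the bulk ellipticity $\alpha'$ and the boundary contribution $\tfrac{r}{2}\boldsymbol{\sigma}$. A minor bookkeeping point is the tacit estimate $\|u\|_{L^{2}(\Omega)} \le \|u\|_{V}$ used when bounding $\int f u$; this is the equivalent-norm interpretation of Lemma \ref{poincare}, consistent with the convention adopted after that lemma.
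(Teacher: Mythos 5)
Your proposal is correct and follows essentially the same route as the paper: test the difference equation with $w=u-v$, use $(u+v)(u-v)=u^2-v^2$ and the nonnegativity of $u$ to discard the $v^2$-term, reduce to $\boldsymbol{\sigma}\|u\|_{L^r(\partial\Omega)}^r\le\int_\Omega fu$ via the state energy identity, and close with the Young-type bound on $\|u\|_V$ that produces $\alpha_0$ and the $\tfrac{r-2}{2}\boldsymbol{\sigma}|\partial\Omega|$ remainder. The only cosmetic difference is that you apply Young pointwise on $\partial\Omega$ where the paper applies H\"older followed by Young to the integral, which yields the same bound; your flagged caveat about $\|u\|_{L^2(\Omega)}\le\|u\|_V$ is likewise present in the paper's own estimate.
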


\begin{proof}
By \eqref{eq:rE} and  \eqref{eq:adE}, we have
\begin{equation*}
    \int_{\Omega} \kappa(x) \nabla(u(x)-v(x)) \cdot \nabla(u(x)-v(x)) \d x - \int_{\partial \Omega} \kappa(x) \nabla (u(x)-v(x)) \cdot \nu(x) (u(x)-v(x)) \, \d \sigma=0.
\end{equation*}
From the boundary conditions, the second term of the left-hand side implies that
\begin{align}
\lefteqn{
-\int_{\partial \Omega} \kappa(x) \nabla (u(x)-v(x)) \cdot \nu(x) (u(x)-v(x))\, \d\sigma}\nonumber\\
&=  
\int_{\partial \Omega} \boldsymbol{\sigma} \left\{ u^{r-1}(x) -(r-1) u^{r-2}(x)v(x) -ru^{r-1}(x) \right\} (u(x)-v(x))\, \d\sigma
\displaybreak[0]\nonumber\\[2mm]
&= 
- \boldsymbol{\sigma} (r-1) \int_{\partial \Omega} u^{r-2}(x)(u^2(x)-v^2(x))\, \d\sigma
\ge 
- \boldsymbol{\sigma} (r-1) \int_{\partial \Omega} u^{r}(x)\, \d\sigma.\nonumber
\end{align}
Here we used the nonnegativity of $u\ge0$ in the last inequality.
Hence we have
\begin{equation}\label{eq:u-v}
    \alpha' \|\nabla(u-v)\|_{L^2(\Omega)}^2 \le \boldsymbol{\sigma} (r-1) \int_{\partial \Omega} u^r(x)\, \d\sigma
    \overset{\eqref{eq:rE}}{\le} (r-1)\int_{\Omega} fu\, \d x
    \le (r-1)\|f\|_{L^2(\Omega)}\|u\|_{V}.
\end{equation}
In the rest of the proof, it suffices to show that
\begin{equation}\label{est:H^1}
    \|u\|_V \le \sqrt{\frac{2}{\alpha_0}} 
\left\{ \frac{1}{2\alpha_0} \|f\|_{L^2(\Omega)}^2 + \frac{r-2}{2} \boldsymbol{\sigma} |\partial \Omega| \right\}^{\frac{1}{2}}.
\end{equation}
By \eqref{eq:rE}, it holds that
\begin{equation*}
    \alpha' \|\nabla u\|_{L^2(\Omega)}^2 + \boldsymbol{\sigma} \int_{\partial \Omega} u^r(x) \, \d\sigma \le \int_{\Omega} fu\, \d x.    
\end{equation*}
Moreover, it follows from H\"older's inequality and Young's inequality that
\begin{equation*}
    \int_{\partial \Omega} u^2(x)\, \d\sigma 
    \le 
    \left( \int_{\partial \Omega} u^r(x)\, \d\sigma \right)^{\frac{2}{r}} |\partial \Omega|^{\frac{r-2}{r}}
    \le \frac{2}{r} \int_{\partial \Omega} u^r(x) \, \d\sigma + \frac{r-2}{r} |\partial \Omega|.
\end{equation*}
Thus we obtain
\begin{equation*}
    \alpha'\|\nabla u\|_{L^2(\Omega)}^2 + \frac{r}{2} \boldsymbol{\sigma} \|u\|_{L^2(\partial \Omega)}^2 - \frac{r-2}{2}\boldsymbol{\sigma} |\partial \Omega| \le \|f\|_{L^2(\Omega)}\|u\|_{V}.
\end{equation*}
Using Young's inequality, we can derive
\begin{equation*}
    \alpha_0 \|u\|_V^2 \le \frac{\alpha_0}{2} \|u\|_V^2 + \frac{1}{2\alpha_0} \|f\|_{L^2(\Omega)}^2 + \frac{r-2}{2}\boldsymbol{\sigma} |\partial \Omega|,
\end{equation*}
which implies \eqref{est:H^1}.
Therefore, by \eqref{eq:u-v} and \eqref{est:H^1}, we obtain 
\begin{equation*}
    \|\nabla (u-v)\|_{L^2(\Omega)}^2\le 
\frac{r-1}{\alpha'} \sqrt{\frac{2}{\alpha_0}} \|f\|_{L^2(\Omega)}
\left\{ \frac{1}{2\alpha_0} \|f\|_{L^2(\Omega)}^2 + \frac{r-2}{2} \boldsymbol{\sigma} |\partial \Omega| \right\}^{\frac{1}{2}},
\end{equation*}
which is the desired result.
\end{proof}
Combining Propositions \ref{P:d-sensitivity} with \ref{P:state-ad}, we see by $f\not\equiv 0$ 
that, for any $\kappa, h\in \mathcal{M}(\alpha',\beta')$,
\begin{align*}
    \left\langle \mathcal{E}_{\rm hom}'(\kappa),h\right\rangle_{[L^{\infty}(\Omega)]^{d\times d}}
  &  =
  -  \int_{\Omega}h(x)\nabla u(x)\cdot \nabla v(x)\, \d x\\
&= 
\frac{1}{2}\int_{\Omega}h(x)\nabla (u(x)-v(x))\cdot \nabla(u(x)- v(x))\, \d x\\
&\quad
-
\frac{1}{2}\int_{\Omega}h(x)\nabla u(x)\cdot \nabla u(x)\, \d x
-
\frac{1}{2}\int_{\Omega}h(x)\nabla v(x)\cdot \nabla v(x)\, \d x\\
&<
\frac{\beta'}{2}\|\nabla (u-v)\|_{L^2(\Omega)}^2\\
& \le
\frac{\beta'}{2}\frac{r-1}{\alpha'} \sqrt{\frac{2}{\alpha_0}} \|f\|_{L^2(\Omega)}
\left\{ \frac{1}{2\alpha_0} \|f\|_{L^2(\Omega)}^2 + \frac{r-2}{2} \boldsymbol{\sigma} |\partial \Omega| \right\}^{\frac{1}{2}}.
\end{align*}
Thus one may expect  
$
\mathcal{E}_{\rm hom}'(\kappa)=-\nabla u\cdot \nabla v\le 0
$ 
a.e.~in $\Omega$ at least in the case where $f\ge 0$ is small (see Remark \ref{R:linear-selfad} below). 
In this particular case, due to $\mathcal{E}(\theta)\le \mathcal{E}_{\rm hom}(\kappa)$ by \eqref{eq:est-hom}, one can estimate the minimum value by replacing \eqref{eq:RP} with the following minimization problem\/{\rm :} 
\begin{align}\label{eq:RP2}
\inf_{\theta\in \Theta} \mathcal{E}(\theta),    
\end{align}
where $\Theta:=\{\theta\in L^{\infty}(\Omega;[0,1])\colon \|\theta\|_{L^{1}(\Omega)}=\gamma|\Omega|\}$\footnote{In the self adjoint problem (i.e.,~$v=\pm u$), there is a case where an optimal homogenized matrix can be characterized as $\kappa^\ast=\kappa[\theta^\ast]=\alpha(1-\theta^\ast)+\beta\theta^\ast$. Here $\theta^\ast$ is an optimal volume fraction (i.e.,~$\mathcal{E}_{\rm hom}(\kappa^\ast)=\mathcal{E}_{\rm hom}(\theta^\ast,\kappa_{\rm hom}^\ast)=\mathcal{E}(\theta^\ast)$). Hence it suffices to consider \eqref{eq:RP2} instead of \eqref{eq:RP}; however, nonlinear problems cause non-self adjoint problems in general (see \cite[Theorem 5.5]{ACMOY19} for an optimal homogenized flux). 
Thus the problem with $\chi_{\Omega_1}$ being replaced by $\theta$ as in \eqref{eq:RP2} is just a problem to estimate the infimum value in general. 
On the other hand, as in the homogeneous Dirichlet boundary condition, one can construct the self adjoint problem for the homogeneous Robin boundary condition (i.e.,~$r=2$) by setting $\mathcal{E}(\chi)=\langle f,u_{\chi}\rangle$. 
}

Now, we are in a position to describe a numerical algorithm for the volume fraction $\theta\in \Theta$.
Based on the (steepest gradient) descent method (or time-discrete version of the gradient flow) and Proposition \ref{P:d-sensitivity}, we set 
\begin{align}
    \theta_{i+1}=\theta_{i}-\tau \mathcal{E}'(\theta_i)=\theta_{i}-\tau (\beta-\alpha)\nabla u_{\theta_i}\cdot \nabla v_{\theta_i}
    \quad \text{ for } i\in\N\cup\{0\}.
\label{eq:theta_i+1}
\end{align}
Here $\theta_0\in L^{\infty}(\Omega)$ is an initial volume fraction,
$\tau>0$ stands for the step width (or time step, i.e.,~$\theta_i$ implies $\theta_i=\theta(x,\tau i)$) and
$u_\theta\in K$ and $v_\theta\in K$ are unique weak solutions to \eqref{eq:rE} with $\kappa=\kappa[\theta]$ and \eqref{eq:adE} with $\kappa=\kappa[\theta]$, respectively. 
Repeating \eqref{eq:theta_i+1} until $\|\theta_{i+1}\|_{L^1(\Omega)}=\gamma|\Omega|$ and $\|\theta_{i+1}-\theta_i\|_{L^1(\Omega)}\le \eta$ for $\eta>0$ small enough, one can estimate the minimum value of $\mathcal{E}(\chi_{\Omega_1})$ numerically by Theorem \ref{T:existence}. The following is the numerical algorithm\/{\rm :}

\begin{algorithm}[H]
    \caption{Optimization for the volume fraction of \eqref{eq:RP2}. 
    }
    \label{alg1}
    \begin{algorithmic}[1]
    \STATE 
    Let $i=0$.
    Set $\Omega\subset \R^d$, $\alpha,\beta, \gamma,\tau>0$, $f\in L^2(\Omega;\R_+)$ and $\theta_0\in \Theta$.
    \STATE
    Solve \eqref{eq:rE} with $\kappa=\kappa[\theta_i]$ to determine $u_{\theta_{i}}$ in \eqref{eq:theta_i+1}. 
    \STATE
    Solve \eqref{eq:adE} with $\kappa=\kappa[\theta_i]$ to determine $v_{\theta_{i}}$ in \eqref{eq:theta_i+1}.  
    \STATE
    Compute \eqref{eq:theta_i+1}.  
  \STATE
Determine $\lambda\in\R$ such that 
$$
|\gamma|\Omega|-\|\theta^\lambda_{i+1}\|_{L^1(\Omega)}|\le \eta_1,
$$
where $\eta_1>0$, $\theta^\lambda_{i+1}$ is such that
$$
\theta^\lambda_{i+1}=\max\{0,\min\{\theta_{i+1}+\lambda,1\}\} 
$$
(see, e.g.,~\cite[\S 3.5]{ACMOY19} for projected gradient methods).

\STATE
   Check for the convergence condition, 
   \begin{align}
       \|\theta^\lambda_{i+1}-\theta_{i}\|_{L^1(\Omega)}\le \eta_2,
       \label{eq:cc}
   \end{align}
   where $\eta_2>0$.
   If it is satisfied, then terminate the optimization as $\theta_{i+1} \leftarrow \theta^\lambda_{i+1}$; otherwise, return 2 after setting $\theta_{i} \leftarrow \theta^\lambda_{i+1}$. 
    \end{algorithmic}
\end{algorithm}

\begin{rmk}[Linearization of the thermal radiation boundary condition] \label{R:Newton}
      \rm 
To solve \eqref{eq:rE} with $\kappa=\kappa[\theta]$ numerically,  
we first approximate $\boldsymbol{\beta}(u_\theta)$ as $\boldsymbol{\beta}(u_{\rm old})+\boldsymbol{\beta}'(u_{\rm old})(u_\theta-u_{\rm old})$ as in the Newton--Raphson method. 
Here $u_{\rm old}$ is an arbitrarily given function, and we choose $r=d+2$ in \eqref{eq:rE} based on \cite{LV89}, that is, 
$
\boldsymbol{\beta}(u_\theta)\approx \boldsymbol{\sigma}u_{\rm old}^{d+1}+
\boldsymbol{\sigma}(d+1)u_{\rm old}^{d}(u_\theta-u_{\rm old})
=
\boldsymbol{\sigma}(d+1)u_{\rm old}^{d}u_\theta-\boldsymbol{\sigma} du_{\rm old}^{d+1}
$.
We next solve the following linearized equation\/{\rm :} 
\begin{align}
0&=-\int_\Omega f(x) \varphi(x)\, \d x
+
\int_\Omega \kappa[\theta](x) \nabla u_\theta(x)\cdot \nabla \varphi(x)\, \d x\nonumber\\
&\quad+ 
\int_{\partial \Omega}    
\underbrace{\bigl[\boldsymbol{\sigma} (d+1)u_{\rm old}^d (x)u_\theta(x)
    -
    \boldsymbol{\sigma} du_{\rm old}^{d+1}(x)\bigl]}_{\text{linearized thermal radiation boundary condition}}  
    \varphi(x)\, \d \sigma
\quad \text{ for all $\varphi\in V$. }
    \label{eq:L-state}
\end{align}
We finally check the following convergence condition\/{\rm :}
\begin{align}
\left|
    \int_{\Omega} \kappa[\theta](x) |\nabla u_\theta(x)|^2\, \d x+ 
    \boldsymbol{\sigma}\int_{\partial \Omega} |u_\theta (x)|^{d+2}\, \d \sigma
    -\int_\Omega f(x) u_\theta(x)\, \d x    
\right|\le \eta_3
\quad \text{ for some $\eta_3>0$.} 
\label{eq:L-conv}
\end{align}
If \eqref{eq:L-conv} is not satisfied, we set $u_{\rm old}\leftarrow u_\theta$, and then we solve \eqref{eq:L-state} again. 
This procedure is repeated until \eqref{eq:L-conv}
is satisfied. 
\end{rmk}

\begin{rmk}[Convergence condition]
\rm 
By \eqref{eq:theta_i+1}, it holds that
$$
|\theta_{i+1}-\theta_{i}|=\tau (\beta-\alpha)|\nabla u_{\theta_i}\cdot \nabla v_{\theta_i}|.
$$
If $\theta_{i+1}$ attains the critical point of $\mathcal{E}(\theta)$, then the right-hand side vanishes.  
Since it belongs to $L^1(\Omega)$ at least, the convergence condition \eqref{eq:cc} is reasonable.
In this paper, we do not mention the regularization of sensitivity $\mathcal{E}'(\theta_i)$ to become $\theta_{i+1}\in L^{\infty}(\Omega)$ since Algorithm \ref{alg1} is only used to estimate the minimum value of the original optimal design problem \eqref{eq:OP} and $\inf_{\theta\in \Theta} \mathcal{E}(\theta)= \inf_{\theta\in \tilde{\Theta}} \mathcal{E}(\theta)$,    
where $\tilde{\Theta}:=\{\theta\in L^{1}(\Omega)\colon \theta(x)\in [0,1] \text{ and } \|\theta\|_{L^{1}(\Omega)}=\gamma|\Omega|\}$.
\end{rmk}

\begin{rmk}[Self adjointness and convexity of a linearized problem] \label{R:linear-selfad}
\rm
Let $u_{\rm L}$ be a solution to \eqref{eq:rE} 
with $|u|^{r-2}u$ being replaced by $u_{\rm old}^{r-1}$, and then consider the minimization problem \eqref{eq:RP} 
with $u_{\rm hom}$ being replaced by $u_{\rm L}$. Here $u_{\rm old}$ is a function that appears in Remark \ref{R:Newton}.
Then it can be regarded as a self-adjoint problem by the same argument as in Proposition \ref{P:d-sensitivity}. Thus \eqref{eq:RP2} with $u_{\rm hom}$ being replaced by $u_{\rm L}$ turns out to be a true relaxation problem by Theorem \ref{T:existence}, Proposition \ref{P:d-sensitivity} and \ref{eq:est-hom}, and moreover, it has only global minimizers in terms of double minimization; indeed, define $\tilde{\mathcal{E}}\colon V\to \R$ by
$$
\tilde{\mathcal{E}}(w)=
 \frac{1}{2}\int_\Omega \kappa[\theta](x)|\nabla w(x)|^2\, \d x
    +
    \int_{\partial \Omega} \boldsymbol{\sigma}u_{\rm old}^{r-1}(x) w(x)\, \d \sigma
   -\int_\Omega f(x) w(x)\, \d x.
$$
Then we see that $u_{\rm L}={\rm argmin}_{w\in K}\tilde{\mathcal{E}}(w)$ and  
\begin{align*}
\int_{\Omega} \kappa[\theta](x)|\nabla u_{\rm L}(x)|^2\, \d x
&=
2\left(\int_\Omega f(x) u_{\rm L}(x)\, \d x    
-\int_{\partial \Omega} \boldsymbol{\sigma}u_{\rm old}^{r-1}(x) u_{\rm L}(x)\, \d \sigma\right)
-
\int_{\Omega} \kappa[\theta](x)|\nabla u_{\rm L}(x)|^2\, \d x\\ 
&= -2 \inf_{w\in K}\tilde{\mathcal{E}}(w).
\end{align*}
Let $\hat{\mathcal{E}}:V\times [L^2(\Omega)]^d\to \R$ be such that $\hat{\mathcal{E}}(w,\nabla w)=\tilde{\mathcal{E}}(w)$. 
Since $\hat{\mathcal{E}}$ is convex, the dual energy yields 
$$
\inf_{w\in K}\tilde{\mathcal{E}}(w)=\inf_{\substack{P^\ast\in [L^2(\Omega)]^d,\\ -\dv P^\ast=f \text{ in }\ \Omega, \\
-P^\ast\cdot \nu=\boldsymbol{\sigma}u_{\rm old}^{r-1} \text{ on }\ \partial\Omega
}}\int_{\Omega} \kappa[\theta]^{-1}(x)|P^\ast(x)|^2\, \d x
$$
(see, e.g.,~\cite[Theorem 2.29 and Example 2.30]{ACMOY19}).
Thus the minimization problem \eqref{eq:RP} with $u_{\rm hom}$ being replaced by $u_{\rm L}$ is equivalent to the following double minimization problem\/{\rm :}
\begin{align}\label{eq:doublemin}
    \min_{(\theta,P^\ast)\in \mathcal{W}} \int_{\Omega} \kappa[\theta]^{-1}(x)|P^\ast(x)|^2\, \d x, 
\end{align}
where 
$$
\mathcal{W}:=\{
(\theta,P)\in \Theta \times [L^2(\Omega)]^d\colon
-\dv P^\ast=f \text{ in }\ \Omega \text { and } 
-P^\ast\cdot \nu=\boldsymbol{\sigma}u_{\rm old}^{r-1} \text{ on }\ \partial\Omega
\}.
$$
Since $\mathcal{W}$ is convex, and $(\theta, P^\ast)\mapsto \kappa[\theta]^{-1}|P^\ast|^2$ is also convex, the assertion is obtained. 
Hence, if $u_{\rm old}^{r-1}$ sufficiently approximates $u^{r-1}$ on $\partial\Omega$, the convergence value of energies via Algorithm \ref{alg1} also approximates the minimum value for \eqref{eq:OP} with 
$A_{\chi_{\Omega_1}}=\kappa[\chi_{\Omega_1}]$ and
$\boldsymbol{\beta}(u_{\chi_{\Omega_1}})=\boldsymbol{\sigma}u_{\chi_{\Omega_1}}^{r-1}$.

In this paper, to estimate the minimum value of \eqref{eq:OP} with $A_{\chi_{\Omega_1}}=\kappa[\chi_{\Omega_1}]$ and $\boldsymbol{\beta}(u_{\Omega_1})=\boldsymbol{\sigma}u_{\Omega_1}^{d+1}$ numerically, we consider the state equation as an approximated equation with inhomogeneous Neumann boundary conditions in optimization of the volume fraction; in other words, $\mathcal{E}'(\theta_i)$ in \eqref{eq:theta_i+1} is regarded as $\mathcal{E}'(\theta_i)=-(\beta-\alpha)|\nabla u_{\theta_i}|^2$.
\end{rmk}

\section{Approximation problem for \eqref{eq:OP} via positive parts of level set functions}\label{S:LSM}
In this section, we shall prepare a numerical analysis to find 
two-material distributions that give a value close to the minimum for \eqref{eq:OP} with 
$A_{\chi_{\Omega_1}}=\kappa[\chi_{\Omega_1}]$ and 
$\boldsymbol{\beta}(w)=\boldsymbol{\sigma}w^{d+1}$. 
As already mentioned in (ii) of Remark \ref{R:opt-d}, 
we need to construct the optimal volume fraction $\theta$ numerically such that
the intermediate set $[0<\theta<1]$ rarely appears due to non-existence of minimizers for \eqref{eq:OP} in general. 
As one of the methods to avoid the so-called grayscale problem, 
\emph{level set methods} (see, e.g.,~\cite{OS88,AJT02,AJT04,AA06}) are known and employed to construct an approximated minimizer below.
In level set methods, the following level set function is introduced to represent two-material domains\/{\rm :}  
\begin{align*}
\phi(x)
\begin{cases}
>0,\quad & x\in \Omega_1,\\
=0,\quad & x\in \partial\Omega_1\cap\partial\Omega_0,\\
<0,\quad & x\in \Omega_0.
\end{cases}
\end{align*}
Based on \cite{O23}, we consider the following perimeter constraint problem via the positive part of the level set function as an approximation problem of \eqref{eq:RP2}\/{\rm :}
\begin{align}\label{eq:P}
    \inf_{\phi\in U_{\rm ad}} \left\{J_\e(\phi):=\mathcal{E}(\phi_+)+\frac{\e}{p}\int_\Omega |\nabla \phi(x)|^p\, \d x\right\},
\end{align}
where $\phi_+=\max\{0,\phi\}$, $U_{\rm ad}:=\{ \phi\in W^{1,p}(\Omega)\colon |\phi|\le 1 \text{ and } \|\phi_+\|_{L^1(\Omega)}=\gamma |\Omega|\}$, $1<p<+\infty$ and $\e>0$.
In particular, the second term of $J_\e$ (i.e.,~the $p$-Dirichlet energy) plays a role of perimeter constraint (cf.~\cite{AAC86,BMW06}).

\subsection{Characterization of minimizers for level set functions}
In order to form the basis of numerical analysis for \eqref{eq:P}, we first show the following
\begin{thm}[Existence theorem for minimizers of \eqref{eq:P}]\label{T:existence2}
    There exists at least one minimizer of \eqref{eq:P}.
\end{thm}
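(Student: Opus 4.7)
The plan is to apply the direct method of the calculus of variations. Take a minimizing sequence $(\phi_n)\subset U_{\rm ad}$ for $J_\e$. Since $\mathcal{E}(\phi_{n,+})\ge 0$, boundedness of $J_\e(\phi_n)$ yields a uniform bound on $\int_\Omega|\nabla\phi_n|^p\,\d x$, which combined with the pointwise bound $|\phi_n|\le 1$ gives boundedness of $(\phi_n)$ in $W^{1,p}(\Omega)$. By reflexivity of $W^{1,p}(\Omega)$ and the Rellich--Kondrachov compact embedding, there exist a (not relabeled) subsequence and $\phi\in W^{1,p}(\Omega)$ such that $\phi_n\rightharpoonup \phi$ weakly in $W^{1,p}(\Omega)$, strongly in $L^p(\Omega)$ and a.e.~in $\Omega$.

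Next I would check that $\phi\in U_{\rm ad}$: the pointwise constraint $|\phi|\le 1$ passes to the limit by a.e.~convergence, and since $s\mapsto s_+$ is $1$-Lipschitz, $\phi_{n,+}\to \phi_+$ strongly in $L^1(\Omega)$, so $\|\phi_+\|_{L^1(\Omega)}=\gamma|\Omega|$. The $p$-Dirichlet seminorm $\phi\mapsto \frac{\e}{p}\int_\Omega|\nabla\phi|^p\,\d x$ is convex and continuous on $W^{1,p}(\Omega)$, hence weakly lower semicontinuous. Therefore everything reduces to proving the \emph{continuity} of the state-energy functional, namely $\mathcal{E}(\phi_{n,+})\to \mathcal{E}(\phi_+)$.

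The hard part is precisely this last continuity. Since $0\le\phi_{n,+}\le 1$ and $\phi_{n,+}\to\phi_+$ a.e., the dominated convergence theorem gives $\kappa_n:=\kappa[\phi_{n,+}]\to\kappa:=\kappa[\phi_+]$ strongly in $L^q(\Omega)$ for every $q<\infty$. Let $u_n\in K$ be the unique weak solution to \eqref{eq:rE} with coefficient $\kappa_n$. Testing \eqref{var_ineq} with $v=0$ and combining \eqref{hyp:J} with Lemma \ref{poincare} gives $\|u_n\|_V\le C$, so along a subsequence $u_n\rightharpoonup u^*$ weakly in $V$ and strongly in $L^2(\partial\Omega)$. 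Strong $L^q$-convergence of $\kappa_n$ together with weak $L^2$-convergence of $\nabla u_n$ lets me pass to the limit in $\int_\Omega\kappa_n\nabla u_n\cdot\nabla v\,\d x$ for fixed $v\in K$; the same supremum-of-cutoffs argument that produced \eqref{liminf:bilinear} yields $a(u^*,u^*)\le\liminf_n a_n(u_n,u_n)$, now avoiding H-convergence thanks to the strong convergence of $\kappa_n$. Combined with the weak lower semicontinuity of $J$ on $L^2(\partial\Omega)$, this identifies $u^*$ as a weak solution of the variational inequality with coefficient $\kappa$, hence $u^*=u_{\phi_+}$ by the uniqueness part of Theorem \ref{T:well-posedness}, and the full sequence converges.

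Finally, since $j(w)=\frac{\boldsymbol{\sigma}}{r}|w|^r$ with $r\ge 2$ is Gateaux-differentiable and $u^*$ satisfies the weak form \eqref{weak_form}, Remark \ref{R:UBI} supplies the matching upper bound $\limsup_n a_n(u_n,u_n)\le a(u^*,u^*)$, giving the energy convergence $\mathcal{E}(\phi_{n,+})\to \mathcal{E}(\phi_+)$. Combining with the weak lower semicontinuity of the perimeter term then produces $J_\e(\phi)\le\liminf_n J_\e(\phi_n)=\inf_{U_{\rm ad}} J_\e$, so $\phi\in U_{\rm ad}$ is a minimizer.
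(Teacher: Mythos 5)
Your proof is correct and follows the same overall skeleton as the paper's: direct method, boundedness of the minimizing sequence in $W^{1,p}(\Omega)$, extraction of a subsequence with $\phi^n\rightharpoonup\phi^\ast$ weakly in $W^{1,p}(\Omega)$ and $\phi^n_+\to\phi^\ast_+$ a.e., continuity of $\mathcal{E}$ along this sequence, and weak lower semicontinuity of the $p$-Dirichlet term. The one place where you genuinely diverge is the key continuity step $\mathcal{E}(\phi^n_+)\to\mathcal{E}(\phi^\ast_+)$: the paper disposes of it in one line by citing Theorem \ref{T:H-conv} (together, implicitly, with the energy-convergence argument of Remark \ref{R:UBI}), whereas you rederive it from scratch by observing that the a.e.\ convergence of the uniformly bounded densities gives \emph{strong} $L^q$-convergence of $\kappa[\phi^n_+]$, so that the limit passage in the bilinear form, the lower bound $a(u^\ast,u^\ast)\le\liminf_n a_n(u_n,u_n)$, and the identification of the limit solution all go through without any H-convergence or div--curl machinery. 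This is a legitimate simplification: strong convergence of coefficients trivially implies H-convergence to the pointwise limit, so your route recovers exactly what the paper's citation provides, but in a self-contained and more elementary way that makes transparent why the H-limit here is $\kappa[\phi^\ast_+]\mathbb{I}$ itself (something the paper's terse citation leaves to the reader). The cost is length; the benefit is that the proof of this theorem no longer leans on the compensated-compactness input behind Theorem \ref{T:H-conv}. Your final appeal to Remark \ref{R:UBI} for the matching $\limsup$ bound is the same device the paper uses (via \eqref{eq:minimizer}) and is justified here since $\boldsymbol{\beta}(w)=\boldsymbol{\sigma}|w|^{r-2}w$ makes $J$ Gateaux differentiable, so the weak form \eqref{weak_form} is available.
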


\begin{proof}
Let $(\phi^n)$ be a minimizing sequence in $U_{\rm ad}$. Thus $\phi^n$ satisfies
$$
\lim_{n\to +\infty} J_\e(\phi^n)=\inf_{\phi\in U_{\rm ad}}J_\e(\phi).
$$
Since $(\phi^n)$ is bounded in $W^{1,p}(\Omega)$ due to $J_\e(\phi^n)\to +\infty$ as $\|\phi^n\|_{W^{1,p}(\Omega)}\to +\infty$, there exist a (not relabeled) subsequence of $(n)$ and $\phi^\ast\in U_{\rm ad}$
such that
\begin{align}\label{eq:conv1}
    \phi^n\to \phi^\ast \text{ weakly in } W^{1,p}(\Omega)
 \end{align}
and 
\begin{align}\label{eq:conv2}
    \phi^n_+\to \phi^\ast_+ \text{ a.e.~in } \Omega.
\end{align}
Hence Theorem \ref{T:H-conv} ensures that
\begin{align}
    \lim_{n\to +\infty}\mathcal{E}(\phi^n_+)= \mathcal{E}(\phi^\ast_+).
\label{eq:econv}
\end{align}
Combining \eqref{eq:econv} with the weak lower semicontinuity of norm, we obtain
$$
\inf_{\phi\in U_{\rm ad}}  J_\e(\phi)
\le
J_\e(\phi^\ast)
\le \liminf_{n\to +\infty} J_\e(\phi^n)
=
\inf_{\phi\in U_{\rm ad}}  J_\e(\phi),
$$
which completes the proof.
\end{proof}

Furthermore, we have the following
\begin{thm}[Convergence of functionals for minimizers]\label{T:conv-min}
Let $\phi^\e$ be a minimizer of \eqref{eq:P}. 
Then there exist a {\rm(}not relabeled{\rm)} subsequence of $(\e)$ and $\phi^\ast\in U_{\rm ad}$ such that
$\phi^\e\to \phi^\ast$ weakly in $W^{1,p}(\Omega)$ and
$$
\mathcal{E}(\phi^\ast_+)=
\lim_{\e\to 0_+}J_\e(\phi^\e)=\inf_{\phi\in U_{\rm ad}}\mathcal{E}(\phi_+).
$$
\end{thm}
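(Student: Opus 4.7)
The plan is to follow a $\Gamma$-convergence-type strategy: bound the energy from above by any competitor, extract a weak limit, and pass to the limit in $\mathcal{E}$ via the homogenization theorem. Set $m_\e := J_\e(\phi^\e)$ and $m_0 := \inf_{\phi \in U_{\rm ad}} \mathcal{E}(\phi_+)$.

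First I would establish the upper bound $\limsup_{\e \to 0_+} m_\e \le m_0$. Fix any competitor $\phi \in U_{\rm ad}$; the minimality of $\phi^\e$ gives $m_\e \le J_\e(\phi) = \mathcal{E}(\phi_+) + (\e/p)\|\nabla \phi\|_{L^p(\Omega)}^p$, whence $\limsup_{\e \to 0_+} m_\e \le \mathcal{E}(\phi_+)$. Taking the infimum over $\phi \in U_{\rm ad}$ yields the claimed bound, and combining it with the trivial lower bound $m_\e \ge \mathcal{E}(\phi^\e_+) \ge m_0$ (since $\phi^\e \in U_{\rm ad}$) will eventually produce $\lim_{\e \to 0_+} m_\e = m_0$.

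Next I would extract a weak $W^{1,p}$ limit. Since $|\phi^\e| \le 1$, the sequence is bounded in $L^\infty(\Omega)$. To also control $\|\nabla \phi^\e\|_{L^p(\Omega)}$, I would compare $\phi^\e$ against a well-chosen competitor $\bar\phi \in U_{\rm ad}$ achieving (or nearly achieving) $m_0$: substituting $\mathcal{E}(\phi^\e_+) \ge m_0$ into $m_\e = \mathcal{E}(\phi^\e_+) + (\e/p)\|\nabla \phi^\e\|_{L^p(\Omega)}^p$ and combining with $m_\e \le J_\e(\bar\phi)$ yields $\|\nabla \phi^\e\|_{L^p(\Omega)}^p \le \|\nabla \bar\phi\|_{L^p(\Omega)}^p + (p/\e)(\mathcal{E}(\bar\phi_+) - m_0)$. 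Provided $\bar\phi$ can be taken as an honest minimizer of $\mathcal{E}(\cdot_+)$ on $U_{\rm ad}$ (which can be secured by a diagonal extraction from the minimizers of Theorem \ref{T:existence2} as the perimeter weight tends to zero), the last term vanishes and $(\phi^\e)$ is uniformly bounded in $W^{1,p}(\Omega)$. Reflexivity of $W^{1,p}$ and the compact embedding $W^{1,p}(\Omega) \hookrightarrow L^p(\Omega)$ then give, up to a subsequence, $\phi^\e \to \phi^\ast$ weakly in $W^{1,p}(\Omega)$, strongly in $L^p(\Omega)$, and pointwise a.e.~in $\Omega$, with $\phi^\ast \in U_{\rm ad}$ because the constraints $|\phi^\ast| \le 1$ and $\|\phi^\ast_+\|_{L^1(\Omega)} = \gamma |\Omega|$ pass to the limit.

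Finally I would pass to the limit in $\mathcal{E}(\phi^\e_+) \to \mathcal{E}(\phi^\ast_+)$ using the homogenization theorem: the a.e.~convergence $\phi^\e_+ \to \phi^\ast_+$ yields $\kappa[\phi^\e_+] \to \kappa[\phi^\ast_+]$ a.e., and then Theorem \ref{T:H-conv} together with the energy convergence observed in Remark \ref{R:UBI} gives $\mathcal{E}(\phi^\e_+) \to \mathcal{E}(\phi^\ast_+)$. Chaining this with the previous steps produces $\mathcal{E}(\phi^\ast_+) = \lim_{\e \to 0_+} \mathcal{E}(\phi^\e_+) \le \lim_{\e \to 0_+} m_\e \le m_0 \le \mathcal{E}(\phi^\ast_+)$, forcing equality throughout. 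The main obstacle is the uniform $W^{1,p}$-bound on $(\phi^\e)$: since $\e \to 0_+$, the naive estimate $(\e/p)\|\nabla \phi^\e\|_{L^p(\Omega)}^p \le m_\e$ degenerates, and the argument really rests on having at hand either a genuine minimizer of $\mathcal{E}(\cdot_+)$ on $U_{\rm ad}$ or a recovery sequence whose energy gap with $m_0$ is $o(\e)$.
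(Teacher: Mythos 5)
Your overall strategy coincides with the paper's proof of this theorem: minimality against fixed competitors gives $\limsup_{\e\to 0_+}J_\e(\phi^\e)\le\inf_{\phi\in U_{\rm ad}}\mathcal{E}(\phi_+)$, a weak $W^{1,p}$ limit $\phi^\ast\in U_{\rm ad}$ is extracted, and the energy passes to the limit through Theorem \ref{T:H-conv} exactly as in the proof of Theorem \ref{T:existence2}. The one step where you depart from the paper is the uniform $W^{1,p}(\Omega)$ bound on $(\phi^\e)$, and there your argument has a genuine gap: the competitor $\bar\phi$ you invoke must be an actual minimizer of $\mathcal{E}(\cdot_+)$ over $U_{\rm ad}$ (or at least satisfy $\mathcal{E}(\bar\phi_+)-m_0=o(\e)$), but the existence of such a minimizer in $U_{\rm ad}$ is precisely the content of Theorem \ref{T:conv-min}, so ``securing it by a diagonal extraction from the minimizers of Theorem \ref{T:existence2} as the perimeter weight tends to zero'' is circular --- that diagonal extraction \emph{is} this theorem. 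Nor does Theorem \ref{T:existence} supply such a $\bar\phi$: the relaxed minimizer there need not lie in $W^{1,p}(\Omega)$, and its homogenized matrix need not be of the form $\kappa[\theta]$. Without an independent source for $\bar\phi$, your estimate $\|\nabla\phi^\e\|_{L^p(\Omega)}^p\le\|\nabla\bar\phi\|_{L^p(\Omega)}^p+(p/\e)\bigl(\mathcal{E}(\bar\phi_+)-m_0\bigr)$ gives nothing as $\e\to 0_+$.

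That said, you have correctly located the delicate point of the published argument as well: the paper dismisses the uniform bound with the single remark that $a_\e\to+\infty$ as $\|\phi^\e\|_{W^{1,p}(\Omega)}\to+\infty$, but the coercivity constant of $J_\e$ on the gradient term is $\e/p$ and degenerates in the limit, so this does not by itself yield an $\e$-independent bound either (the $L^\infty$ constraint $|\phi^\e|\le 1$ only controls the $L^p$ part). In fact, adding the two minimality inequalities $J_s(\phi^s)\le J_s(\phi^t)$ and $J_t(\phi^t)\le J_t(\phi^s)$ for $s\ge t$ yields $\|\nabla\phi^s\|_{L^p(\Omega)}\le\|\nabla\phi^t\|_{L^p(\Omega)}$, i.e., the gradient norms of the minimizers increase as $\e$ decreases, so their uniform boundedness is a substantive issue rather than a formality. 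Everything else in your proposal --- the sandwich $m_0\le\mathcal{E}(\phi^\e_+)\le m_\e$, the stability of the constraints under the weak limit, and the use of a.e.\ convergence of $\phi^\e_+$ to obtain $H$-convergence of $\kappa[\phi^\e_+]\mathbb{I}$ and hence convergence of the Dirichlet energies --- is correct and matches the paper's route.
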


\begin{proof}
We first note that $a_\e:=J_{\e}(\phi^\e)$ has a limit; indeed, if $s\ge t$ for $s,t>0$, we have
\begin{align}
a_{s}\ge J_{t}(\phi^{s})\ge a_{t}\ge \inf_{\theta\in\Theta} \mathcal{E}(\theta)\ge 0,   
\label{eq:necessary}    
\end{align}
which yields the assertion. 
Furthermore, due to $a_\e\to +\infty$ as $\|\phi^\e\|_{W^{1,p}(\Omega)}\to +\infty$, we see that $(\phi^\e)$ is bounded in $W^{1,p}(\Omega)$. Thus, as in the proof of Theorem \ref{T:existence2}, there exist a (not relabeled) subsequence of $(\e)$ and $\phi^\ast\in U_{\rm ad}$ such that \eqref{eq:conv1} and \eqref{eq:conv2} with $n=\e$. 
Therefore, it follows that
\begin{align*}
    \lim_{\e\to 0_+} a_{\e}=\mathcal{E}(\phi^\ast_+).  
\end{align*}
Noting that, for any $\phi\in U_{\rm ad}$, 
\begin{align*}
    a_{\e} 
    &\le \int_{\Omega} \kappa[\phi_+](x)|\nabla u_{\phi}(x)|^2\, \d x
    +\frac{\e}{p}\int_\Omega|\nabla \phi(x)|^p\, \d x
    \to \mathcal{E}(\phi_+)\quad \text{ as  }\ \e\to 0_+,
\end{align*}
one obtains
$\mathcal{E}(\phi^\ast_+)\le \mathcal{E}(\phi_+)$ for all $\phi\in U_{\rm ad}$. This completes the proof.
\end{proof}

\begin{rmk}[Approximate solutions for \eqref{eq:OP}]\label{R:H1}
\rm
By Theorem \ref{T:conv-min}, it holds that
\begin{align*}
    \mathcal{E}(\phi^\ast_+)\le \mathcal{E}(\theta)\quad  \text{ for all  }\ \theta\in \Theta\cap W^{1,p}(\Omega).
\end{align*}
Thus $\phi_+^\ast$ turns out to be a minimizer of \eqref{eq:RP2} under
 $
\inf_{\theta\in\Theta} \mathcal{E}(\theta)
=
\inf_{\theta\in\Theta\cap W^{1,p}(\Omega)} \mathcal{E}(\theta)
$. 
In this case, Theorem \ref{T:existence} ensures that $\phi_+^\e$ for $\e>0$ small enough can be regarded as an approximate solution for \eqref{eq:OP} under the optimal homogenized matrix can be written as the upper bound.
Since $\chi_{\Omega_1}\in BV(\Omega)$ is required at least in the perimeter constraint problem for \eqref{eq:OP}, it is reasonable to assume additional regularity as a setting that avoids the grayscale problem. 
In particular,
the optimal volume fraction $\theta^\ast$ of \eqref{eq:RP2} is weakly differentiable in the direction of $\nabla u_{\theta^\ast}$ under $u_{\theta^\ast}\in H^2(\Omega)$; indeed, we observe that, for any $\varphi\in C^{\infty}_{\rm c}(\Omega)$,
\begin{align*}
( F,\varphi)_{L^2(\Omega)}&:=-\int_{\Omega} \theta^\ast(x)\dv [(\nabla u_{\theta^\ast}(x))\varphi(x)]\, \d x\\
&\ =
-\int_{\Omega} \theta^\ast(x)\Delta  u_{\theta^\ast}(x) \varphi(x)\, \d x
-\int_{\Omega} \theta^\ast(x) \nabla u_{\theta^\ast}(x)\cdot \nabla \varphi(x)\, \d x.
\end{align*}
Then the second term in the last line is written as
\begin{align*}
    -\int_{\Omega} \theta^\ast(x) \nabla u_{\theta^\ast}(x)\cdot \nabla \varphi(x)\, \d x
=
\frac{1}{\beta-\alpha}\int_{\Omega} [\alpha \nabla u_{\theta^\ast}(x)\cdot \nabla \varphi(x)-f(x)\varphi(x)]\, \d x,
\end{align*}
and therefore, $(\nabla \theta^{\ast}\cdot\nabla u_{\theta^\ast},\varphi)_{L^2(\Omega)}$ makes sense
by noting that
\begin{align*}
|( F,\varphi)_{L^2(\Omega)}|
=
\left|-\int_{\Omega} \left[\left(\theta^\ast(x)+\frac{\alpha}{\beta-\alpha}\right)\Delta  u_{\theta^\ast}(x)+ 
\frac{1}{\beta-\alpha}f(x)\right]\varphi(x)\, \d x\right|\le C_{\theta^\ast, f}\|\varphi\|_{L^2(\Omega)}
\end{align*}
(see \cite{D15} for the homogeneous Dirichlet boundary condition).
\end{rmk}

\begin{rmk}[Extension from $\phi_+$ to $\phi_+^m$]\label{R:extension}
\rm 
In Theorems \ref{T:existence2} and \ref{T:conv-min}, one can replace $\phi_+$ in \eqref{eq:P} with $\phi_+^m$ for all $m\ge 1$. Indeed, let $A:L^{m+1}(\Omega)\to L^{(m+1)/m}(\Omega)$ be an operator defined by $A(w)= w_+^m$.
Then $A$ is maximal monotone in $L^{m+1}(\Omega)\times L^{(m+1)/m}(\Omega)$. Noting that
\begin{align*}
\phi^n&\to \phi^\ast &&\text{ weakly in }\ L^{m+1}(\Omega),\\
&\ && \text{ strongly in }\ L^{p}(\Omega),\\
 A(\phi^n) &\to \xi\quad &&\text{ weakly in }\ L^{p/(p-1)}(\Omega)\cap L^{(m+1)/m}(\Omega) 
\end{align*}
for some $\xi\in L^{p/(p-1)}(\Omega)\cap L^{(m+1)/m}(\Omega)$ and $n=n_k$, we have
$$
\langle A(\phi^n),\phi^n\rangle_{L^{m+1}(\Omega)}=
\langle A(\phi^n),\phi^n\rangle_{L^{p}(\Omega)}
\to 
\langle \xi,\phi^\ast\rangle_{L^{p}(\Omega)}
=
\langle \xi,\phi^\ast\rangle_{L^{m+1}(\Omega)}
\quad \text{ as }\ n\to +\infty .
$$
Hence Minty’s trick (see e.g.,~\cite[Corollary 2.4.]{B10}) ensures that $\xi=A(\phi^\ast)=(\phi_+^\ast)^m$. 
Furthermore, 
since $(\phi_+^n)$ is also bounded in $W^{1,p}(\Omega)$, 
one can extract a subsequence of $(n_k)$ (still denoted by $n_k$) such that $\phi_+^{n_k}\to \phi_+^\ast$ a.e.~in $\Omega$, which along with the boundedness of $(\phi_+^n)$ in $L^{\infty}(\Omega)$ yields $A(\phi^{n_k})\to A(\phi^\ast)$ strongly in $L^\ell(\Omega)$ for all $\ell\ge 1$.
The rest of the proofs runs as before. 

In this paper, we select $m=1$ to compare the results in \cite{O23} (cf.~\cite{LZ11} for $m>1$). 
\end{rmk}

\subsection{Numerical algorithm for optimization of level set functions}
Before describing the numerical algorithm, we derive the equation to update the level set function. 
As in \eqref{eq:theta_i+1}, we introduce the following (gradient) descent method\/{\rm :}
$$
\phi_{i+1}=\phi_i-\tau J_\e'(\phi_i)\quad \text{ for } i\in \N\cup\{0\}.
$$
Since the Fr\'echet derivative of $p$-Dirichet energy for the level set function is $-\Delta_p\phi=-\dv(|\nabla \phi|^{p-2}\nabla \phi)$ under homogeneous Dirichlet/Neumann boundary condition, we have
\begin{align}
    \phi_{i+1}=\phi_i-\tau (\partial_{\phi_i}\mathcal{E}((\phi_i)_+)-\e\Delta_p \phi_{i+1})\quad \text{ for } i\in \N\cup\{0\}.
\label{eq:GD1}
\end{align}
Here we note that $\Delta_p \phi_{i}$ is replaced with $\Delta_p \phi_{i+1}$ in order to satisfy $\phi_{i+1}\in W^{1,p}(\Omega)$
(see, e.g.~\cite[Corollary 27.9]{BC11}) for forward-backward splitting schemes).
Furthermore, by Remark \ref{R:extension} and Proposition \ref{P:d-sensitivity}, the above update equation
\eqref{eq:GD1} with $(\phi_i)_+$ being replaced by $(\phi_i)_+^m$ can be written as 
\begin{align}
    \phi_{i+1}=\phi_i-\zeta[\phi_i](-m(\beta-\alpha)(\phi_i)_+^{m-1}\chi_{\phi_i}\nabla u_{\phi_i}\cdot \nabla v_{\phi_i} -\e\Delta_p \phi_{i+1})
\quad \text{ for } i\in \N\cup\{0\},
\label{eq:NLD2}
\end{align}
where $u_{\phi_i}$ and $v_{\phi_i}$ are the unique weak solutions to \eqref{eq:rE} with $\kappa=\kappa[(\phi_i)_+]$ and \eqref{eq:adE} with $\kappa=\kappa[(\phi_i)_+]$, respectively.
Here we note that $\tau>0$ in \eqref{eq:GD1} is extended to 
$\zeta[\phi_i]\ge 0$, which is the generalized step width such that $\zeta[0]=0$. Indeed,
in our setting \eqref{eq:P} (i.e.,~$m=1$),
although $\phi_+$ is not differentiable at $[\phi=0]$, thanks to  
$\zeta:U_{\rm ad}\to L^{\infty}(\Omega)$, the sensitivity with the weight can be denoted by $\zeta[\phi_i]\partial_\phi\mathcal{E}((\phi_i)_+)=-\zeta[\phi_i](\beta-\alpha)\chi_{\phi_i}\nabla u_{\phi_i}\cdot \nabla v_{\phi_i}$ formally.

Now, as in \cite{O23}, we characterized the level set function by a solution to the time discrete version of the following doubly nonlinear diffusion equation \cite{OY23}\/{\rm :}  
\begin{align}\label{eq:NLD}
    \partial_t\phi^{q}-\e \Delta_p \phi=
    (\beta-\alpha)\chi_{\phi}\nabla u_{\phi}\cdot\nabla v_{\phi}
        \quad \text{ in } \Omega\times (0,+\infty).
\end{align}
If we set $q\in (0,1)$ such that $q\approx 1$ for simplicity of linearization, one has $|\phi_{i+1}|^{q-1}\approx |\phi_{i}|^{q-1}$, and then the time discrete equation of \eqref{eq:NLD} is described as follows\/{\rm :}  
\begin{align*}
|\phi_{i}|^{q-1}\frac{\phi_{i+1}-\phi_i}{\tau}-\e \Delta_p \phi_{i+1}=
    (\beta-\alpha)\chi_{\phi_i}\nabla u_{\phi_i}\cdot\nabla v_{\phi_i}
        \quad \text{ in } \Omega.
\end{align*}
In particular, multiplying it by $\tau|\phi_{i}|^{1-q}$, we obtain \eqref{eq:NLD2} with $\zeta[\phi_i]=\tau|\phi_{i}|^{1-q}$. 
In this paper, we choose $p=2$ since one expects that the positive parts of optimal level set functions belong to $V$ from Remark \ref{R:H1}.
Thus $\phi_{i+1}$ satisfies
\begin{align}
&\int_\Omega   |\phi_i|^{q-1}(x)\frac{\phi_{i+1}(x)-\phi_{i}(x)}{\tau}\varphi(x)\, \d x
+
\e\int_\Omega \nabla \phi_{i+1}(x)\cdot \nabla \varphi(x)\, \d x\nonumber\\
&\quad =\int_\Omega (\beta-\alpha)\chi_{\phi_i}(x)\nabla u_{\phi_i}(x)\cdot \nabla v_{\phi_i}(x)\varphi(x)\, \d x
\quad \text{ for all }\ \varphi\in V\cap L^{\infty}(\Omega). 
\label{eq:discnld}
\end{align}
As in Algorithm \ref{alg1}, the following algorithm is proposed\/{\rm :}
\begin{algorithm}[H]
    \caption{Optimization for the level set function.}
    \label{alg2}
    \begin{algorithmic}[1]
    \STATE 
    Let $i=0$.
    Set $\Omega\subset \R^d$, $\alpha,\beta, \gamma>0$, $f\in L^2(\Omega;\R_+)$ and $\phi_0\in U_{\rm ad}$ for $p=2$. 
    \STATE
    Solve \eqref{eq:rE} with  $\kappa=k[(\phi_i)_+]$ to determine $u_{\phi_{i}}$ in \eqref{eq:discnld}. 
    \STATE
    Solve \eqref{eq:adE} with $\kappa=k[(\phi_i)_+]$ to determine $v_{\phi_{i}}$ in \eqref{eq:discnld}. 
    \STATE
    Compute \eqref{eq:discnld}.  
  \STATE
Determine $\lambda\in\R$ such that 
$$
|\gamma|\Omega|-\|(\phi_{i+1}^\lambda)_+\|_{L^1(\Omega)}|\le \eta_1, 
$$
where $\eta_1>0$, $\phi^\lambda_{i+1}$ is such that
$$
\phi^\lambda_{i+1}=\max\{-1,\min\{\phi_{i+1}+\lambda,1\}\}.
$$
\STATE
   Check for the convergence condition, 
   $$
   \|\phi^\lambda_{i+1}-\phi_{i}\|_{L^1(\Omega)}\le \eta_2,
   $$ 
   where $\eta_2>0$.
   If it is satisfied, then terminate the optimization as $\phi_{i+1} \leftarrow \phi^\lambda_{i+1}$; otherwise, return 2 after setting $\phi_{i} \leftarrow \phi^\lambda_{i+1}$. 
    \end{algorithmic}
\end{algorithm}

\section{Numerical results for \eqref{eq:P}}
\label{s:numerical-results}
Based on the previous sections, we shall numerically construct the material distribution of two materials with diffusion coefficients of $\alpha>0$ and $\beta>0$ such that the Dirichlet energy is minimized by using FreeFEM++ \cite{H12} with piecewise linear Lagrange elements on a triangular mesh. 
Throughout this section, we set $\boldsymbol{\sigma}=1$, $d=2$, $\Omega=(0,1)^2$, $\alpha=1$, $\beta=10$ and $\theta_0\equiv\phi_0\equiv\gamma$ for $\gamma\in (0,1)$.

\subsection{Numerical validity}
We first check the numerical validity. 
Based on Algorithms \ref{alg1} and \ref{alg2}, we set $\gamma=0.6$ and $f=0.001$.
As for \eqref{eq:discnld}, the characteristic function $\chi_{\phi_i}$ is treated approximately as $0.5\tanh (\phi_{i}/0.1)+0.5$. 
Then we obtain Figures \ref{fig:5-1} and \ref{fig:5-2}. From Figures \ref{valid-1}--\ref{valid-3}, it is confirmed that Algorithm \ref{alg2} makes $\Omega\subset \R^2$ that almost consists of materials with diffusion coefficients of $\alpha>0$ (the blue domain) and $\beta$ (the red domain). 
In particular, it is noteworthy that $[\phi_+=1]:=\{x\in \Omega\colon \phi_+(x)=1\}$ and $[\phi_+=0]$ involve $[\theta=1]$ and $[\theta=0]$, respectively. 
Furthermore, Figure \ref{fig:5-2} shows that the convergence value of the Dirichlet energy $\mathcal{E}$ is monotonically decreasing with respect to $\e>0$, which means that 
the necessary condition \eqref{eq:necessary} is satisfied, and 
$\mathcal{E}(\phi_+)$ asymptotically tends to $\mathcal{E}(\theta)$ constructed by the optimized volume fraction.
As a qualitative property of (locally) optimal configurations, it is suggested that the family of optimal configurations contain two-phase configurations, and then we see that the Dirichlet energy decreases by increasing the perimeter of the interface.
This completes the confirmation of the validity of the proposed method (see also Remark \ref{R:worst} below).

\begin{figure}[htbp]
   \hspace*{-5mm} 
    \begin{tabular}{cccc}
      \begin{minipage}[t]{0.25\hsize}
        \centering
        \includegraphics[keepaspectratio, scale=0.6]{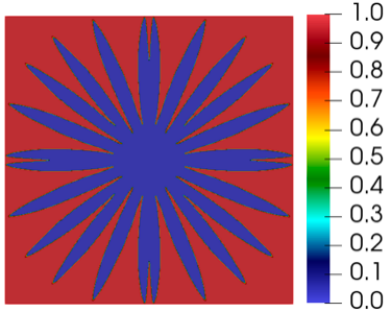}
        \subcaption{$\phi_+$ with $\e=5.0\times 10^{-7}$}
        \label{valid-1}
      \end{minipage} 
      \begin{minipage}[t]{0.25\hsize}
        \centering
        \includegraphics[keepaspectratio, scale=0.6, angle=0]{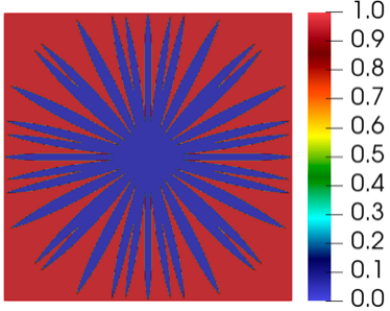}
        \subcaption{$\phi_+$ with $\e=10^{-7}$}
        \label{valid-2}
      \end{minipage} 
         \begin{minipage}[t]{0.25\hsize}
        \centering
        \includegraphics[keepaspectratio, scale=0.6, angle=0]{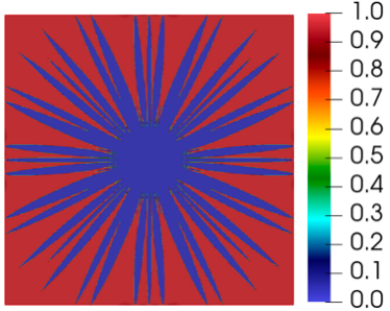}
        \subcaption{$\phi_+$ with $\e=10^{-8}$}
        \label{valid-3}
      \end{minipage}
           \begin{minipage}[t]{0.25\hsize}
        \centering
        \includegraphics[keepaspectratio, scale=0.58, angle=0]{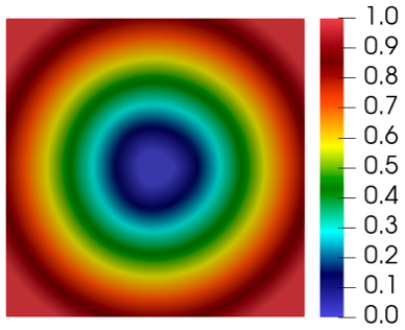}
        \subcaption{$\theta$}
        \label{valid-4}
      \end{minipage}        
    \end{tabular}
    \caption{Optimized configurations. The blue and red domains in (a)--(c) represent materials with diffusion coefficients of $\alpha$ and $\beta$ ($\alpha<\beta$), respectively.}
    \label{fig:5-1}
\end{figure}

\begin{figure}[htbp]
        \centering
        \includegraphics[keepaspectratio, scale=0.33]{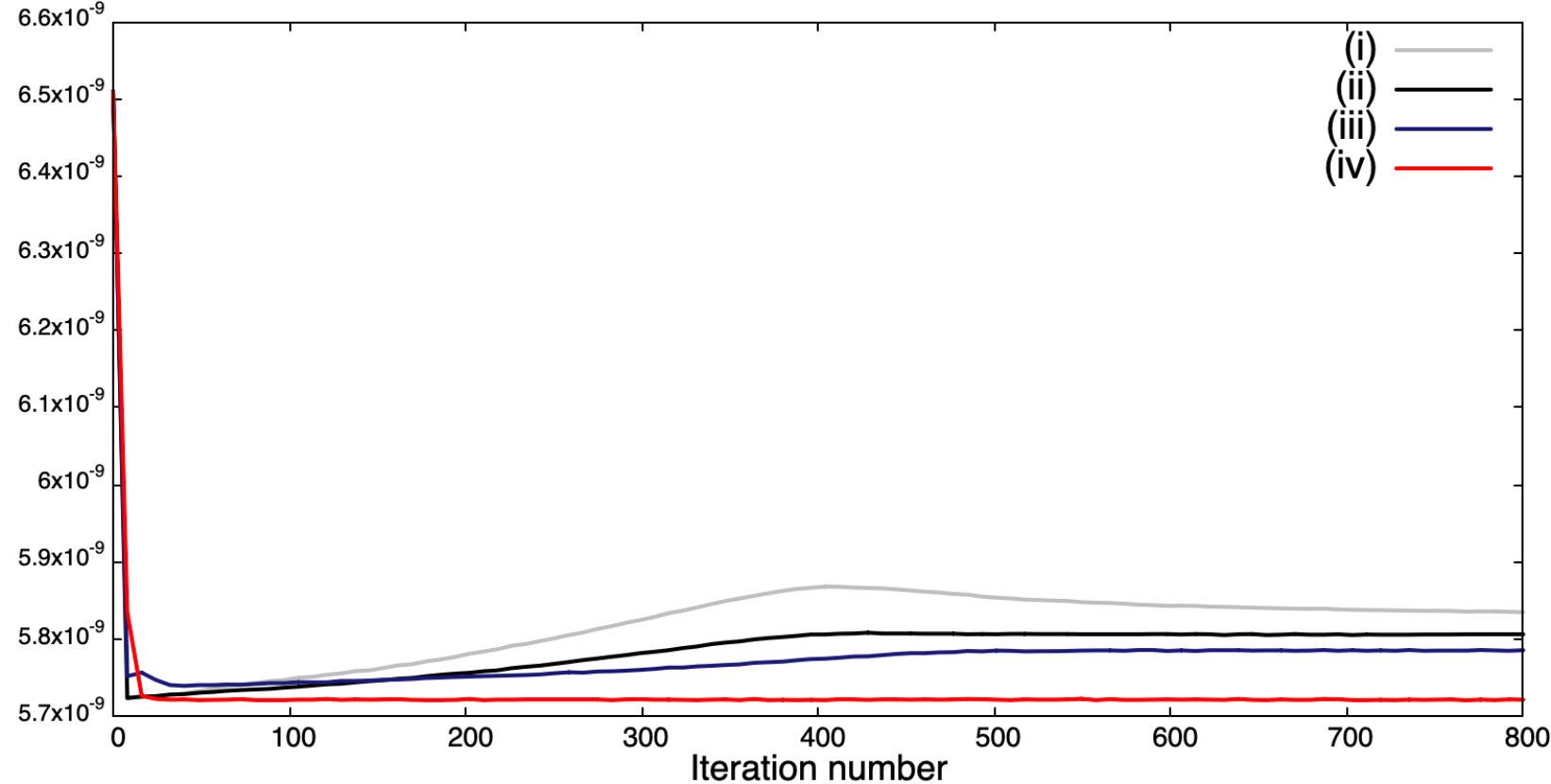}
   \caption{Convergence histories for Dirichlet energies\/{\rm :} (i) $\mathcal{E}((\phi_i)_+)$ with $\e=5.0\times10^{-7}$ (ii) $\mathcal{E}((\phi_i)_+)$ with $\e=10^{-7}$ (iii) $\mathcal{E}((\phi_i)_+)$ with $\e=10^{-8}$ (iv) $\mathcal{E}(\theta_i)$. Here we put $\eta_1=1.0\times 10^{-4}$ and $\eta_2=1.0\times 10^{-5}$.
}
\label{fig:5-2}
\end{figure}

\begin{rmk}[Worst conductor] \label{R:worst}
    \rm 
    The minimization problem \eqref{eq:OP} corresponds to the problem for determining the so-called best two-material thermal conductor. Conversely, as for the worst case, we obtain Figure \ref{fig:worst}. 
    Here we set the objective functional and the diffusion coefficient as $-\mathcal{E} (\chi_{\Omega_1})$ and $\kappa[\chi_{\Omega_1}]=\alpha \chi_{\Omega_1}+(1-\chi_{\Omega_1})\beta$, respectively. Comparing Figure \ref{worst-1} with Figure \ref{worst-2}, we see that similar configurations are obtained, and moreover, it can be confirmed that the convergence values are almost equivalent in Figure \ref{worst-g}. These results suggest the effectiveness of the proposed method.  
\end{rmk}

\begin{figure}[htbp]
   \hspace*{-5mm} 
    \begin{tabular}{cccc}
      \begin{minipage}[t]{0.25\hsize}
        \centering
        \includegraphics[keepaspectratio, scale=0.6]{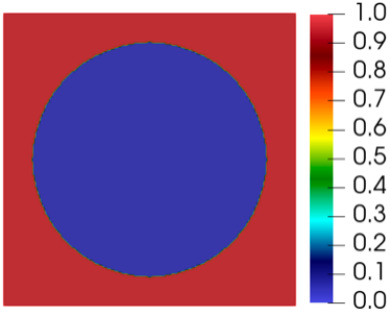}
        \subcaption{$\phi_+$ with $\e=10^{-5}$}
        \label{worst-1}
      \end{minipage} 
      \begin{minipage}[t]{0.25\hsize}
        \centering
        \includegraphics[keepaspectratio, scale=0.6, angle=0]{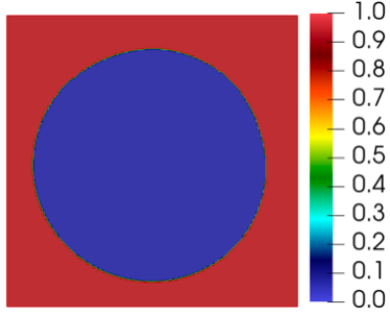}
        \subcaption{$\theta$}
        \label{worst-2}
      \end{minipage} 
         \begin{minipage}[t]{0.5\hsize}
        \centering
        \includegraphics[keepaspectratio, scale=0.55, angle=0]{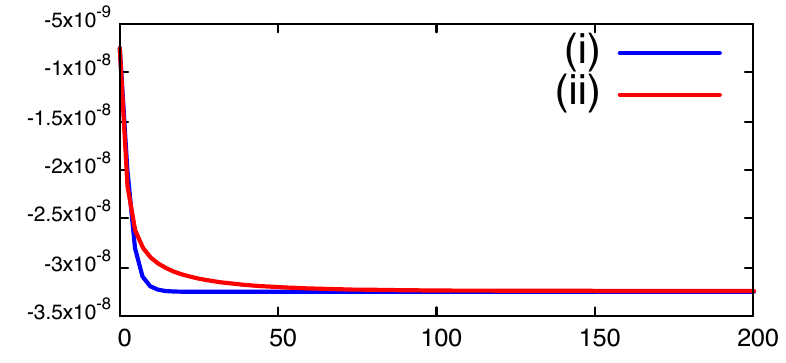}
        \subcaption{Convergence histories\/{\rm :} (i) $-\mathcal{E}(\phi_+)$ with $\e=10^{-5}$ (ii) $-\mathcal{E}(\theta)$. The horizontal axis indicates the iteration number.}
        \label{worst-g}
      \end{minipage}
        \end{tabular}
    \caption{Optimized results in Remark \ref{R:worst}.}
    \label{fig:worst}
\end{figure}

\subsection{Characteristics of nonlinear problems}\label{S:5-2}


This subsection focuses on how the optimized configurations vary for different heat sources to see the characteristics of nonlinear problems.
Here we set $\e=10^{-6}$ and $\gamma=0.5$.
In particular, we compare the cases with the thermal radiation and the Robin boundary conditions. 
Since \eqref{eq:gE} with the Robin boundary condition is linear, the solution is a constant multiple of the original if the heat source is multiplied by a constant, and therefore, 
optimized configurations do not vary by multiplied by a constant of the heat source. 
This is confirmed by Figure \ref{fig:robin}. 
In contrast, with the radiation boundary condition, the solution is different from the original solution multiplied by a constant even if the heat source is multiplied by a constant, and therefore, we see by Figure \ref{fig:radiation} that optimized configurations deeply depend on variations in heat sources, which implies that one of the characteristics of nonlinear problems can be obtained.
Furthermore, based on Figure \ref{fig:comparison}, the following physical interpretation can be made\/{\rm :}

\begin{itemize}
\item[(i)]
 Since both solutions with convection and with radiation asymptotically reach the trivial solution as $|f|\to 0_+$, similar optimized configurations are obtained
 in the case where the value of the heat source is small,
 and therefore, the convergence values of the objective functionals are almost the same; in other words, the contributions of convection and radiation to minimize the energy are almost the same (see (a)).
\item[(ii)]
On the other hand, it can be confirmed that, in the process of increasing the value of the heat source, the optimized configurations with radiation asymptotically tend to be the same as those with the homogeneous Dirichlet boundary condition (see, e.g.,~\cite{ACMOY19,O23}). 
In some cases, convection seems to contribute more to energy minimization than radiation (see (b)). 
\item[(iii)]
Figures (c)--(d) suggest that the contribution of radiation to energy minimization increases with increasing temperature.
\end{itemize}

\begin{figure}[H]
   \hspace*{-5mm} 
    \begin{tabular}{cccc}
      \begin{minipage}[t]{0.25\hsize}
        \centering
        \includegraphics[keepaspectratio, scale=0.6]{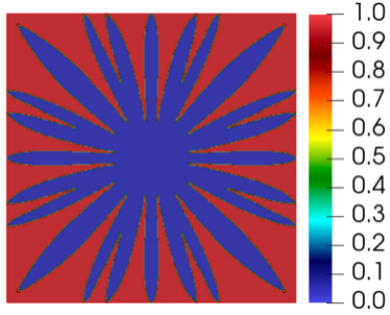}
        \subcaption{$\phi_+$ with $f\equiv 1.0$}
        \label{robin-1}
      \end{minipage} 
      \begin{minipage}[t]{0.25\hsize}
        \centering
        \includegraphics[keepaspectratio, scale=0.6, angle=0]{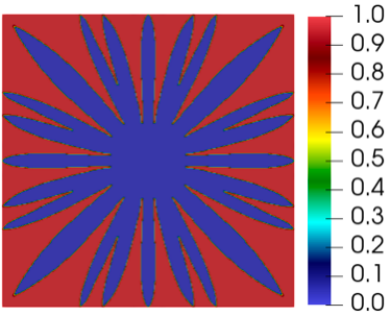}
        \subcaption{$\phi_+$ with $f\equiv 10$}
        \label{robin-2}
      \end{minipage} 
         \begin{minipage}[t]{0.25\hsize}
        \centering
        \includegraphics[keepaspectratio, scale=0.59, angle=0]{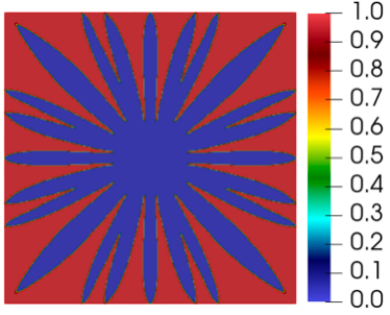}
        \subcaption{$\phi_+$ with $f\equiv 10^2$}
        \label{robin-3}
      \end{minipage}
           \begin{minipage}[t]{0.25\hsize}
        \centering
        \includegraphics[keepaspectratio, scale=0.6, angle=0]{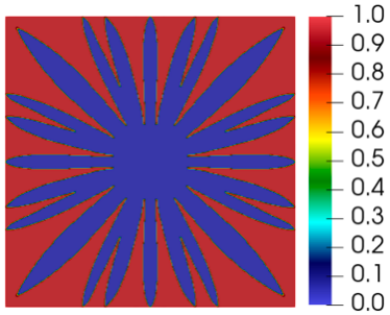}
        \subcaption{$\phi_+$ with $f\equiv 10^7$}
        \label{robin-4}
      \end{minipage}        
    \end{tabular}
    \caption{Optimized configurations for Robin boundary conditions in \S \ref{S:5-2}.}
    \label{fig:robin}
\end{figure}
\begin{figure}[H]
   \hspace*{-5mm} 
    \begin{tabular}{cccc}
      \begin{minipage}[t]{0.25\hsize}
        \centering
        \includegraphics[keepaspectratio, scale=0.6]{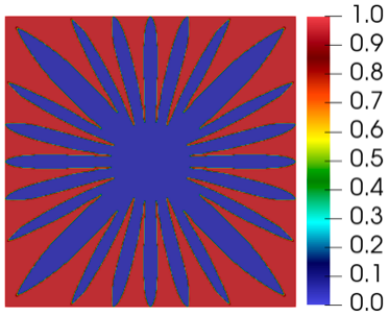}
        \subcaption{$\phi_+$ with $f\equiv 1.0$}
        \label{source-1}
      \end{minipage} 
      \begin{minipage}[t]{0.25\hsize}
        \centering
        \includegraphics[keepaspectratio, scale=0.6, angle=0]{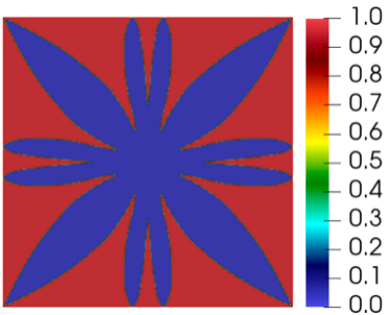}
        \subcaption{$\phi_+$ with $f\equiv 10$}
        \label{source-2}
      \end{minipage} 
         \begin{minipage}[t]{0.25\hsize}
        \centering
        \includegraphics[keepaspectratio, scale=0.59, angle=0]{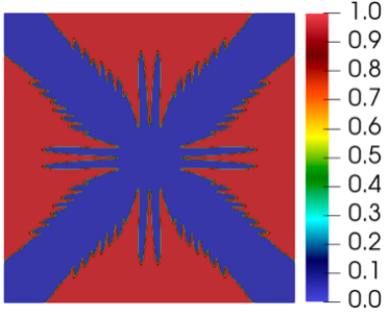}
        \subcaption{$\phi_+$ with $f\equiv 10^2$}
        \label{source-3}
      \end{minipage}
           \begin{minipage}[t]{0.25\hsize}
        \centering
        \includegraphics[keepaspectratio, scale=0.6, angle=0]{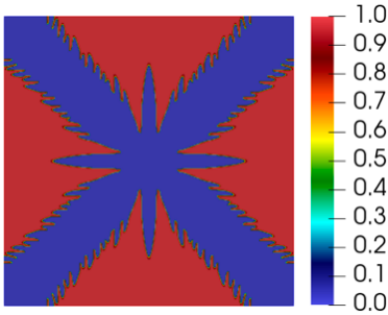}
        \subcaption{$\phi_+$ with $f\equiv 10^7$}
        \label{source-4}
      \end{minipage}        
    \end{tabular}
    \caption{Optimized configurations for radiation boundary conditions in \S \ref{S:5-2}.}
    \label{fig:radiation}
\end{figure}
\begin{figure}[H]
   \hspace*{-5mm} 
    \begin{tabular}{cccc}
      \begin{minipage}[t]{0.5\hsize}
        \centering
        \includegraphics[keepaspectratio, scale=0.37]{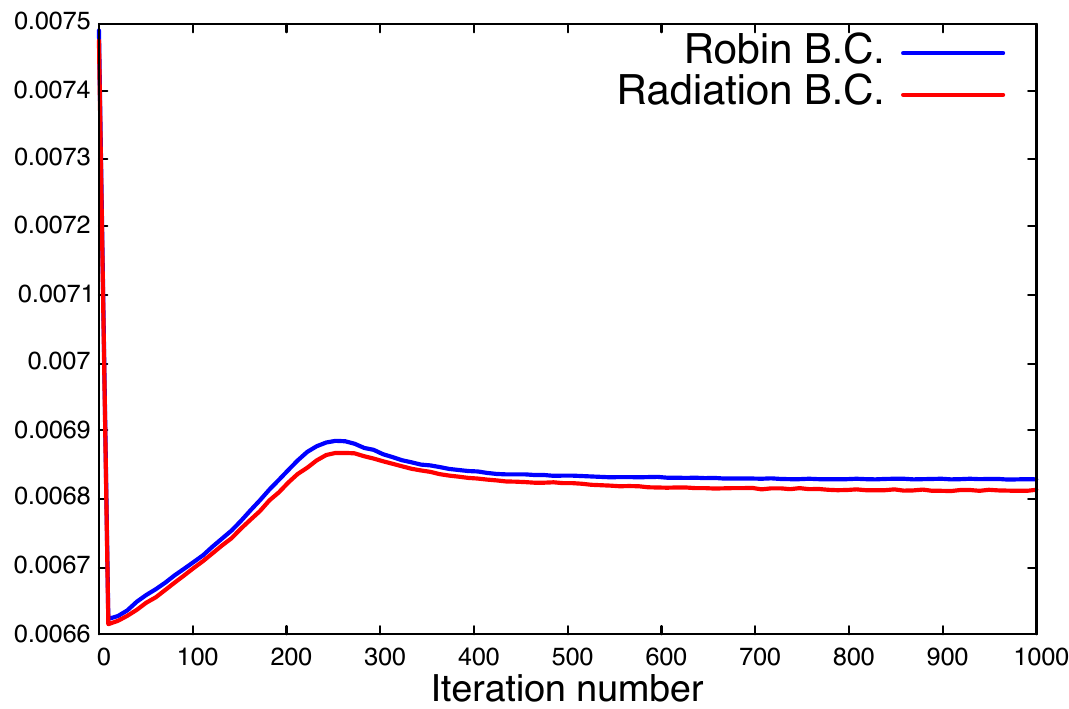}
        \subcaption{$\mathcal{E}((\phi_i)_+)$ with $f\equiv 1.0$.}
        \label{f=1}
      \end{minipage} 
      \begin{minipage}[t]{0.5\hsize}
        \centering
        \includegraphics[keepaspectratio, scale=0.37, angle=0]{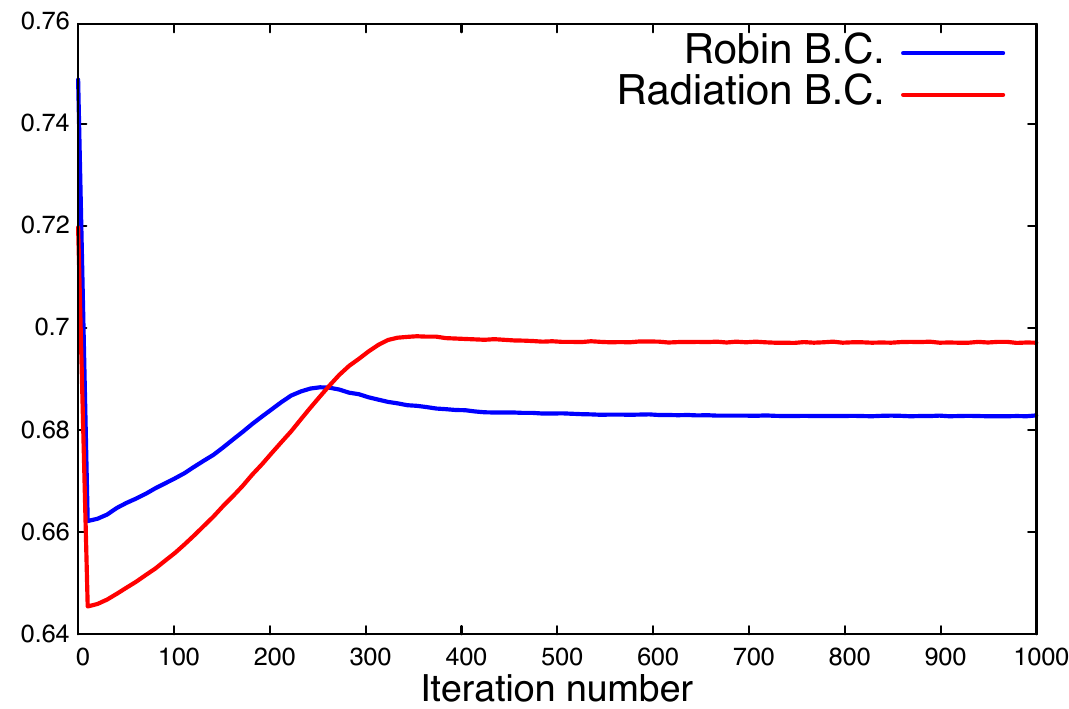}
        \subcaption{$\mathcal{E}((\phi_i)_+)$ with $f\equiv 10$.}
        \label{f=10}
      \end{minipage} \\
         \begin{minipage}[t]{0.5\hsize}
        \centering
        \includegraphics[keepaspectratio, scale=0.37, angle=0]{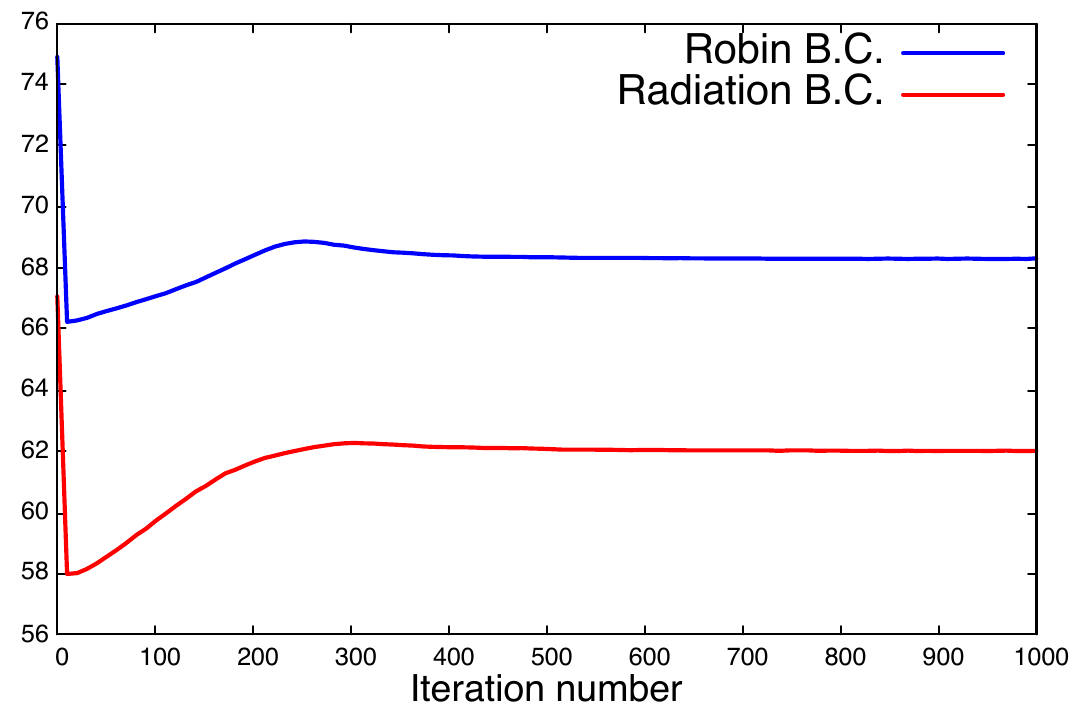}
        \subcaption{$\mathcal{E}((\phi_i)_+)$ with $f\equiv 10^2$.}
        \label{f=100}
      \end{minipage}
           \begin{minipage}[t]{0.5\hsize}
        \centering
        \includegraphics[keepaspectratio, scale=0.37, angle=0]{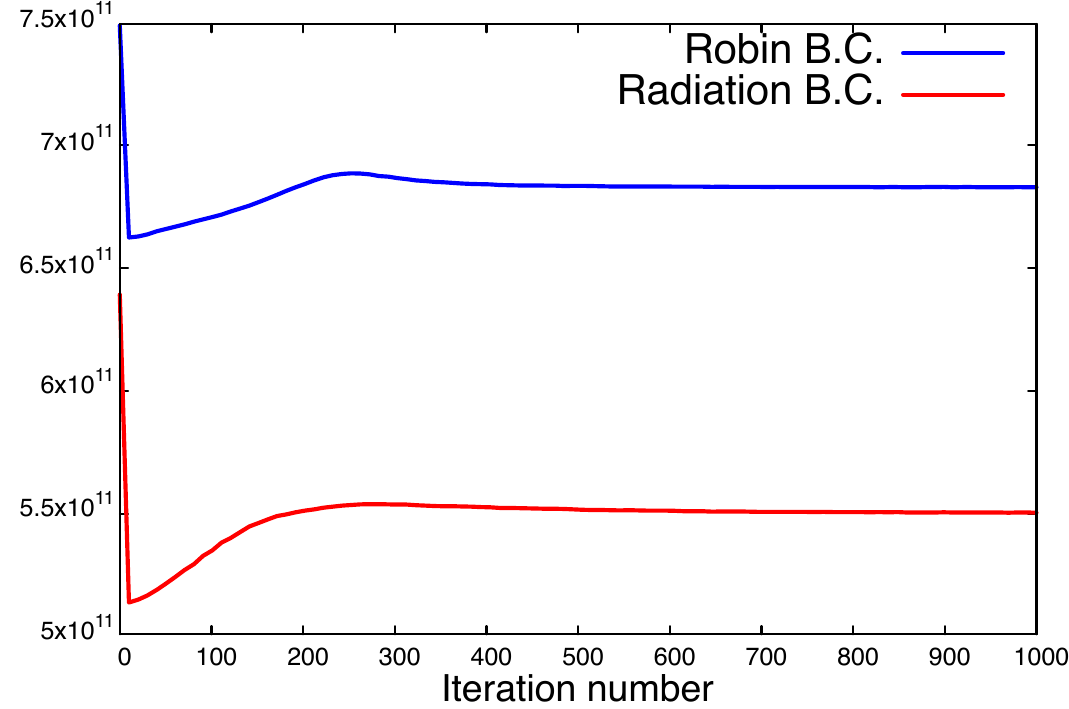}
        \subcaption{$\mathcal{E}((\phi_i)_+)$ with $f\equiv 10^7$.}
        \label{f=10000000}
      \end{minipage}        
    \end{tabular}
    \caption{Convergence histories for the Dirichlet energy $\mathcal{E}((\phi_i)_+)$ comparing Robin and radiation boundary conditions in \S \ref{S:5-2}.}
    \label{fig:comparison}
\end{figure}

\section{Application to thermal radiation problems}
This section is devoted to the application of the optimal design theory and level set-based optimization algorithm to some practical engineering design problems. As the nonlinear boundary condition under consideration describes the thermal radiation, one of the most straightforward but yet important applications is the design of heat radiators. 

Here we assume that a two-phase heat conductor occupying the domain $\Omega$ is situated in a vacuum. The domain $\Omega$ is convex so that its view factor is zero, i.e., no radiating waves can hit the surface $\partial \Omega$. Our aim is to find a piecewise-constant distribution of the coefficient $\kappa$ in $\Omega$ such that it efficiently emits the thermal energy into the ambient vacuum.
\begin{figure}[]
    \centering
    \includegraphics[scale=0.25 ]{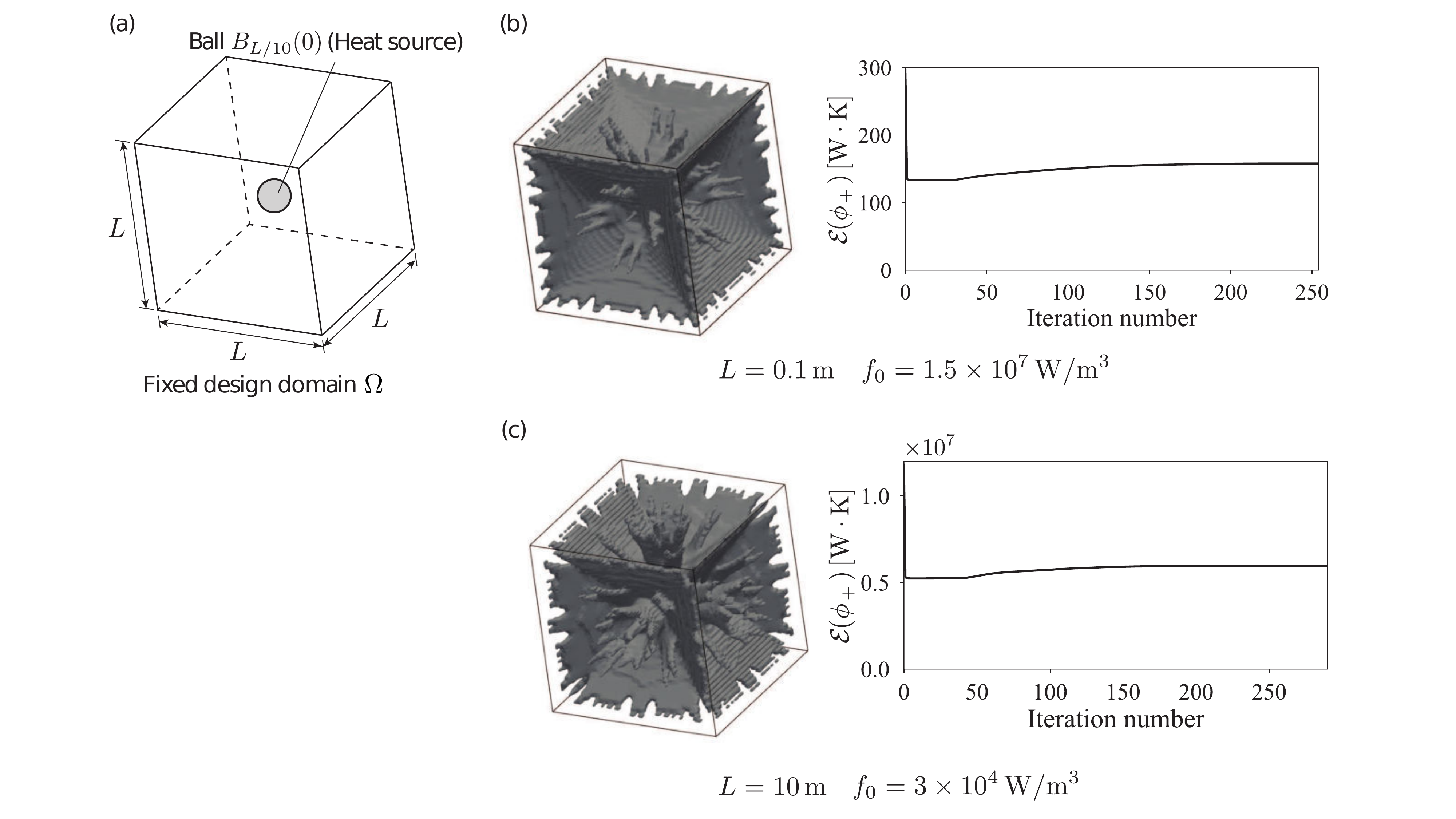}
    \caption{Optimization of three-dimensional heat radiators. The fixed design domain $\Omega$ is set to be a cube $(-L/2,L/2)^3$ with length $L>0$. The heat source is given by $f=f_0 \chi_{B_{L/10}(0)}$ with constant $f_0>0$ as shown in (a). Optimized results are shown in (b) and (c). The shaded regions represent $[\phi>0]$, i.e., conductivity $\beta$.}
    \label{fig:result-case3_result-case5_result}
\end{figure}

As shown in Figure \ref{fig:result-case3_result-case5_result} (a), let $\Omega = (-L/2,L/2)^3$ be the cube with side length $L>0$. The cube contains a heat source $f$.
The boundary of the cube $\Omega$ comprises a radiative surface $\Gamma_R$ and thermally insulated one $\partial\Omega\setminus\Gamma_R$.
Then the temperature $u$ in $\Omega$ solves
\begin{align}
    \begin{cases}
        -\dv(\kappa \nabla u)=f\quad &\text{ in } \Omega,
        \\
        -\kappa \nabla u\cdot \nu=\boldsymbol{\sigma}u^4 \quad &\text{ on } \Gamma_\mathrm{R},
        \\
        -\kappa \nabla u\cdot \nu=0 \quad &\text{ on } \partial\Omega\setminus\Gamma_\mathrm{R}.
\end{cases}
\end{align}

As in Section \ref{s:numerical-results}, we seek the distribution of diffusion coefficients (thermal conductivities) $\alpha$ and $\beta$ such that the Dirichlet (internal) energy is minimized under the volume constraint. Throughout this section, the thermal conductivities are set as $\alpha = 15\, \mathrm{W\,m^{-1}\,K^{-1}}$ (nichrome) and $\beta=400\, \mathrm{W\,m^{-1}\,K^{-1}}$ (copper), respectively.

Let us start with the case of $\Gamma_\mathrm{R} = \partial\Omega$, i.e., all the surfaces are radiative. In this numerical experiment, the volume constraint is set to $\gamma = 0.15$, and the heat source $f$ is uniformly distributed in the ball of radius $L/10$ located at the center of $\Omega$, i.e., $f= f_0 \chi_{B_{L/10}(0)}$ with positive constant $f_0>0$.

Unlike usual conductivity problems with linear boundary conditions, the constants $L$ and $f_0$ may affect the optimizer of the best-conductor problem. This can be confirmed from the results in Figure \ref{fig:result-case3_result-case5_result}, where the optimal designs are shown for two parameter pairs $(L,f_0)$. Figure \ref{fig:result-case3_result-case5_result} (b) and (c) show the optimized configuration of the conductivity $\kappa$ and convergence history of the objective functional. From the results, we observe some clear differences between the two shapes, e.g.,~the number of spikes. 
    \begin{figure}[]
    \centering
    \includegraphics[scale=0.6]{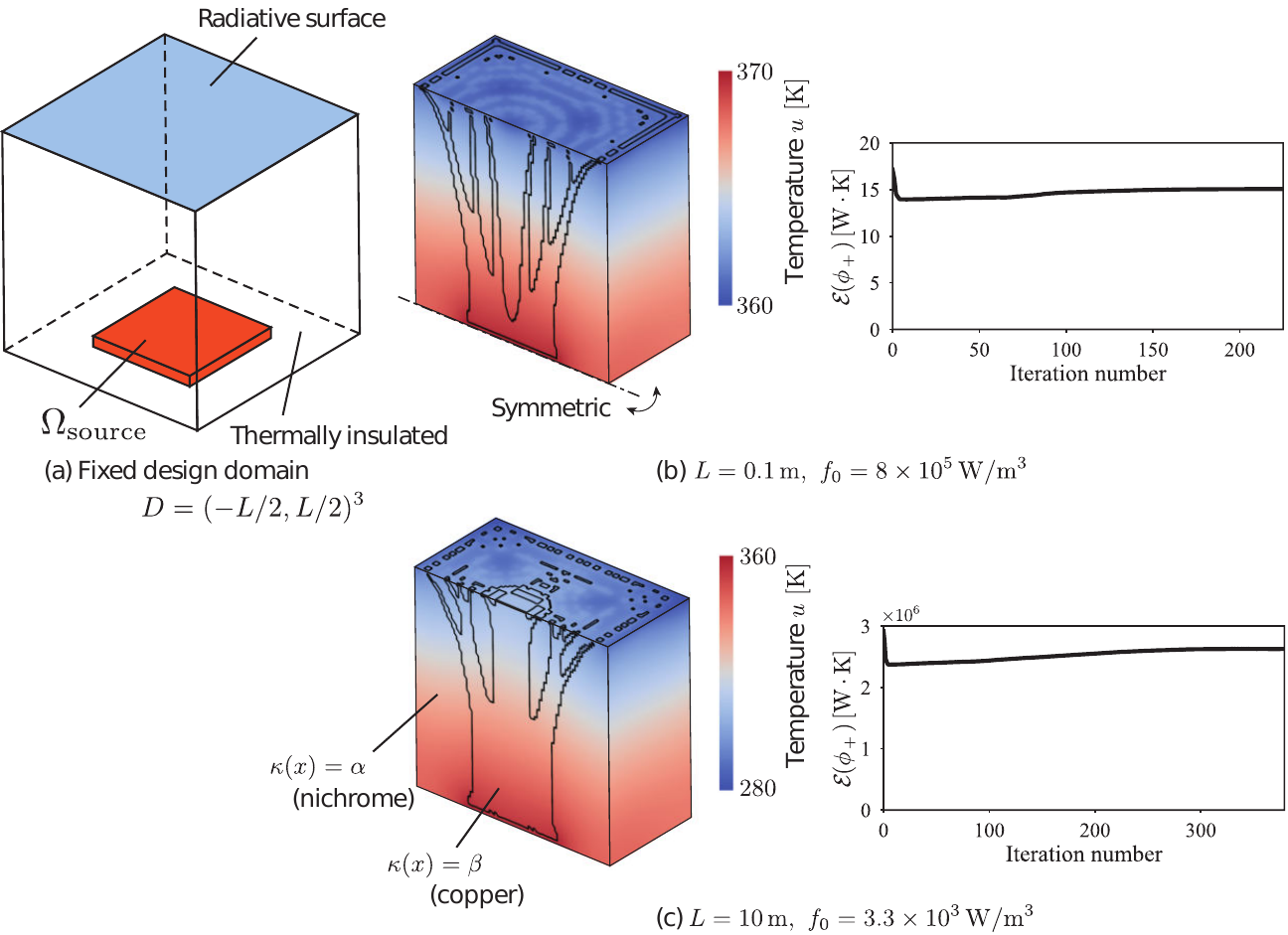}
    \caption{Optimal design of a heat radiator with non-uniformly distributed heat source. (a) Radiative surface and heat source in the fixed design domain $\Omega$. (b) and (c) Optimal design and corresponding temperature field with objective functional values for each parameter pair.}
    \label{fig:result-bottom}
\end{figure}
Another numerical example is shown in Figure \ref{fig:result-bottom}. As in the previous example, the fixed design domain is a cube $\Omega=(-L/2,L/2)^3$ with the length $L>0$. We give a heat source $f$ in the bottom part of $\Omega$ as $f=f_0\chi_{\Omega_\mathrm{source}}$ with constant $f_0>0$ and $\Omega_\mathrm{source} = (-L/4,L/4)\times (-L/4,L/4)\times (-L/2,-9L/20)$. Unlike the previous example, the radiative surface is set to $\Gamma_\mathrm{R} = \{ x_3 = L/2 : x\in\partial\Omega\}$, i.e.,~only the upper surface is radiative. In terms of physics, our aim is to enhance the radiation of heat energy generated in the bulk.

As in the previous example, we consider two cases: $(L,f_0) = (0.1\,\mathrm m, 8\times 10^5\, \mathrm{W/m^3})$ and $(L,f_0) = (10\,\mathrm m, 3.3\times 10^3\, \mathrm{W/m^3})$. The optimized configurations and temperature fields are shown in \ref{fig:result-bottom} (b) and (c) along with convergence history of the objective functional. While both results attain convergence, the obtained design and corresponding temperature fields are quite distinct. The significant difference originates from the nonlinearity in terms of $u$. To see this, let us consider the energies
\begin{align*}
    E_\mathrm{in} =& \mathcal E(\phi_+),
    \\
    E_\mathrm{rad} =& \boldsymbol{\sigma} \int_{\partial\Omega} u^5 (x)\,\mathrm{d}\sigma,
    \\
    E_\mathrm{source} =& \int_\Omega f(x)u(x)\, \d x.
\end{align*}
From the weak form, it immediately follows the energy conservation $E_\mathrm{in} + E_\mathrm{rad} = E_\mathrm{source}$. From a physical point of view, this ratio $E_\mathrm{in} / E_\mathrm{source}$ represents the amount of internal energy stored inside the structure. We calculated the ratio $E_\mathrm{in} / E_\mathrm{source}$ for the two optimized designs (Figure \ref{fig:result-bottom} (b) and (c)) and obtained the values $4.11\times 10^{-3}$ and $0.179$, respectively. This indicates that the usual scaling law does not hold due to the thermal radiation effect.

\begin{figure}[]
    \centering
    \includegraphics[scale=0.55]{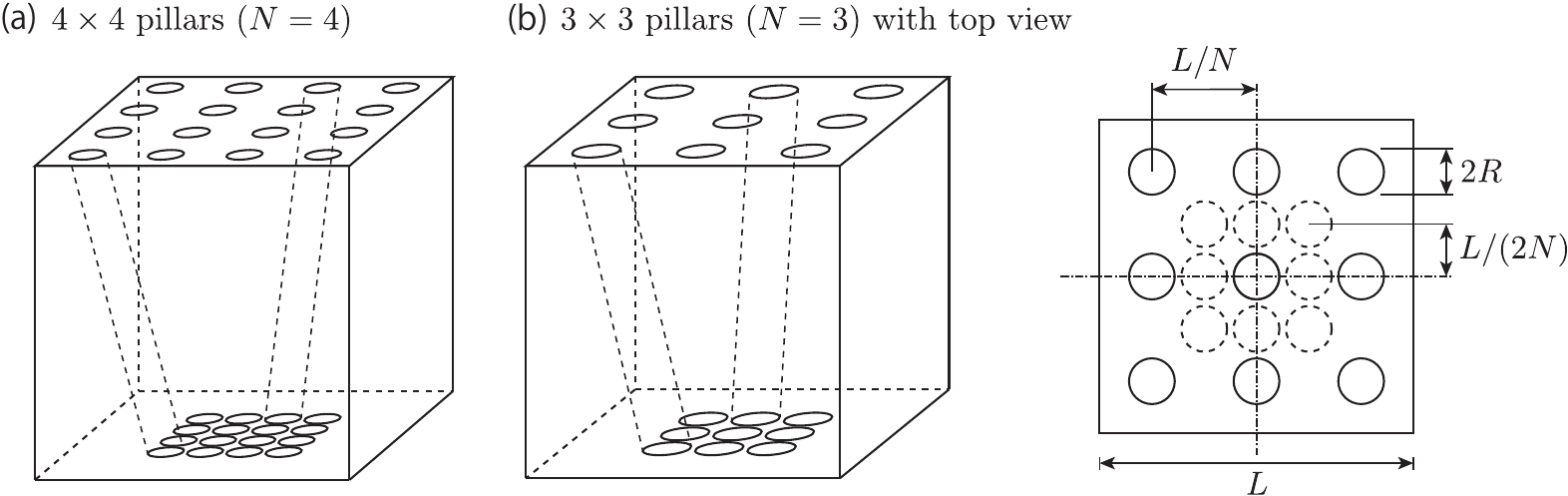}
    \caption{Fin-like structure consisting of $N\times N$ tilted cylinders inside the cube $D$. The radius $R$ of each cylinder is determined such that the total volume of the cylinders is equal to $\gamma L^3$, i.e., $R=\frac{L}{N}\sqrt{\gamma/\pi}$. Two examples are shown in (a) and (b).}
    \label{fig:result-array}
\end{figure}
We finally discuss the performance of the designed heat radiators. As shown in Figure \ref{fig:result-array}, let us consider a fin-like structure with $N\times N$ tilted pillars (cylinders) inside the cube $\Omega=(-L/2,L/2)^3$. The conductivity takes $\kappa=\beta$ inside the pillars and $\kappa=\alpha$ elsewhere. 
The fin-like structure is a reasonable design of a heat radiator as it conducts heat from the bottom to the top radiative surface via the highly conductive pillars. We wish to check that the optimal design is superior to this non-optimized radiator in terms of the objective functional. 

\begin{table}[]
\caption{Values of the energy $\int_\Omega \kappa(x)|\nabla u(x)|^2\mathrm dx \ \mathrm{[W/m^3]}$ for the fin-like structure shown in Figure \ref{fig:result-array}.}
\label{tab:result-array}
\begin{tabular}{lllllllll}
$N$  & $2$    & $3$  & $4$ & $5$ & $6$ & $7$ & $8$  \\ \hline
$L=0.1\,\text{[m]}$ & $22.1$ & $21.4$ & $21.1$ & $20.9$ & $20.8$ & $20.8$ & $20.7$ \\
$L=10 \,\text{[m]}$ & $3.31\times 10^6$ & $3.20\times 10^6$ & $3.16\times 10^6$ & $3.13\times 10^6$ & $3.12\times 10^6$ & $3.10\times 10^6$ & $3.10\times 10^6$
\end{tabular}
\end{table}
To this end, we calculate the value of the energy $\int_\Omega \kappa(x)|\nabla u(x)|^2\mathrm dx$ for various $N$ using the same finite element analysis with quadratic Lagrange elements on a body-fitted tetrahedral mesh. Note that the radius $R$ of each pillar is determined such that the fin-like design satisfies the same volume constraint with $\gamma=0.15$ for fair comparison. The calculated values are shown in Table \ref{tab:result-array}. The results indicate that the energy decreases as the number of pillars increases in both the cases of $L=0.1\,\mathrm{m}$ and $L=10\,\mathrm{m}$. These values are, however, greater than the optimal values $15.1\,\mathrm{W/m^3}$ and $2.63\times 10^6\,\mathrm{W/m^3}$ for $L=0.1\,\mathrm{m}$ and $L=10\,\mathrm{m}$, respectively. These results suggest that the optimal designs yield much more efficient heat radiation than a physically reasonable but non-optimized radiator.

\section{Conclusion}
In this paper, we considered the optimal design problem for the steady-state diffusion equation with nonlinear boundary conditions described by the maximal monotone operator. 
The main target was to analyze the distribution (or shape and topology) of the two-material composite that minimizes the Dirichlet energy with thermal radiation. 
The results obtained in this paper are as follows\/{\rm :}
\begin{itemize}
\item 
We proved that there exists at least a pair of the optimal volume fraction and the optimal homogenized matrix for a true relaxation problem such that the value of the relaxed Dirichlet energy coincides with the minimum value of the original design problem. To this end, we also proved the existence and uniqueness of the weak solution to the state equation with nonlinear boundary conditions described by the maximal monotone operator and the corresponding homogenization theorem.
\item 
In order to estimate the minimum value of the original design problem, the sensitivity of the relaxed Dirichlet energy was derived rigorously, at least under the smoothness assumptions for the domain and the two-material diffusion coefficient.
\item 
We considered the perimeter constraint problem via the positive part of the level set function as an approximation problem for the relaxation problem and proved the existence of minimizers. In particular, it was shown that the limit of the sequence of minimizers with respect to the perturbation parameter becomes a minimizer of the restricted relaxation problem in the Sobolev space.
\item 
By deriving the so-called weighted sensitivity, the level set function was updated by employing the time-discrete version of the nonlinear diffusion equation, and optimized configurations with almost no intermediate sets were obtained. Furthermore, it was numerically verified that the convergence value of the Dirichlet energy is asymptotic to a minimum value if the perturbation parameter is sufficiently small.
\item 
As one of the characteristics of the nonlinear problem, it was confirmed that the optimized configuration deeply depends on the value of the heat source. In particular, the contribution of radiation to energy minimization seems to increase with increasing temperature.
\item Three-dimensional numerical examples were also provided. We designed the distribution of thermal conductivity such that it minimizes internal energy due to an external heat source. The performance of the designed radiators was tested via comparison with a simple fin-like structure.
\end{itemize}

\section*{Declaration of competing interest}
The authors declare that they have no known competing financial interests or personal relationships that could have appeared to influence the work reported in this paper.

\section*{Data availability}
Data will be made available on request.


\begin{thebibliography}{99}

        \bibitem{AAC86} N.~Aguilera, H.W.~Alt, L.A.~Caffarelli,
        \newblock{An optimization problem with volume constraint},
        \newblock{SIAM J.~Control Optim.} {\bf 24} (1986), 191--198.

    
        \bibitem{A02} G.~Allaire, 
        \newblock{\it Shape optimization by the homogenization method},
        \newblock{Vol.~146 of Applied Mathematical Sciences}, 
        \newblock{Springer-Verlag}, New York (2002).

        \bibitem{ACMOY19} G.~Allaire, L.~Cavallina, N.~Miyake, T.~Oka, T.~Yachimura, 
        \newblock{The homogenization method for topology optimization of structures: old and new}, 
        \newblock{Interdiscip. Inform. Sci.} {\bf 25} (2019), 75--146.

        \bibitem{AJT02} G.~Allaire, F.~Jouve, A.M.~Toader,  
        \newblock{A level-set method for shape optimization},
        \newblock{C.~R.~Acad.~Sci.~Paris} {\bf 334} (2002), 125--1130.

        \bibitem{AJT04} G.~Allaire, F.~Jouve, A.M.~Toader,  
        \newblock{Structural optimization using sensitivity analysis and a level-set method},
        \newblock{J.~Comp.~Phys.} {\bf 194} (2004), 363--393.

        \bibitem{AMP10} G.~Allaire, A.~M\"{u}nch, F.~Periago, 
        \newblock{Long time behavior of a two-phase optimal design for the heat equation},
        \newblock{SIAM J.~Control Optim.} {\bf 48} (2010), 5333--5356.

        \bibitem{AB93} L.~Ambrosio, G.~Buttazzo,
        \newblock{An optimal design problem with perimeter penalization},
        \newblock{Calc.~Var.} {\bf 1} (1993), 55--69.

        \bibitem{AA06} S.~Amstutz, H.~Andr\"a, 
        \newblock{A new algorithm for topology optimization using a level-set method}, 
        \newblock{J.~Comput.~Phys.} {\bf 216} (2006), 573--588.

        \bibitem{B10} V.~Barbu,
        \newblock{\it Nonlinear differential equations of monotone types in Banach spaces},
        Springer Monographs in Mathematics, vol.~190, Springer, New York, 2010.
        
        \bibitem{BC11} H.H.~Bauschke, P.L.~Combettes,
        {\it Convex Analysis and Monotone Operator Theory in Hilbert Spaces}, Springer, New York (2011). 
   
        \bibitem{BMW06} J.F.~Bonder, S.~Mart\'{i}nez, N.~Wolanski, 
        \newblock{An optimization problem with volume constraint for a degenerate quasilinear operator},    
        \newblock{J.~Differential Equations} {\bf 227} (2006) 80--101. 

        \bibitem{BFF92} S.~Brahim-Otsmane, G.A.~Francfort, F.~Murat, 
        \newblock{Correctors for the homogenization of the wave and heat equations}, 
        \newblock{J.~Math.~Pures Appl.} {\bf 71} (1992), 197--231.
  
        \bibitem{B71} H.~Br\'{e}zis, 
        \newblock{Monotonicity methods in Hilbert spaces and some applications to nonlinear partial differential equations, in Contributions to Nonlinear Funct. Analysis}, 
        \newblock{Madison, 1971, (E. Zarantonello ed.), Acad. Press} (1971), 101--156.

        \bibitem{B11} H.~Brezis, 
        \newblock{\it Functional Analysis. Sobolev Spaces and Partial Differential Equations}, Universitext. Springer, New York, 2011.

        \bibitem{bruns2001topology} T.E.~Bruns, D.A.~Tortorelli, \newblock{Topology optimization of non-linear elastic structures and compliant mechanisms}, \newblock{Comput.~Methods Appl.~Mech.~Engrg.} {\bf 190} (2001), 3443--3459.

        \bibitem{chen2016topology} X.~Chen, \newblock{Topology optimization of microfluidics — A review}, \newblock{Microchemical Journal} {\bf 127} (2016), 52--61.
        
        \bibitem{D15} J.~Casado-Díaz,
        \newblock{Some smoothness results for the optimal design of a two-composite material which minimizes the energy},
        \newblock{Calc.~Var.} {\bf 53} (2015), 649--673.

        \bibitem{DLM23} J.~Casado-Díaz, M.~Luna-Laynez, F.~Maestre, 
        \newblock{Optimal design problems through the homogenization method},
        Handbook of Numerical Analysis {\bf 24} (2023), 1--28.

        \bibitem{JCD22} J.~Casado-Díaz, C.~Conca, D.~Vasquez-Varas
        \newblock{
        Minimization of the p-Laplacian first eigenvalue for a two-phase material},
	J.~Comput.~Appl.~Math. {\bf 399} (2022), 113722.

        \bibitem{dems2005sensitivity} K.~Dems, R.~Korycki \newblock{Sensitivity analysis and optimal design for steady conduction problem with radiative heat transfer}, J.~Therm.~Stresses {\bf 28} (2005), 213--232.

        \bibitem{gilmore2002spacecraft} D.G.~Gilmore, \newblock{Spacecraft Thermal Control Handbook: Fundamental Technologies}, {\bf 1}, 2nd ed., The Aerospace Press, 2002.
      
        \bibitem{Td2} L.~He, C.Y.~Kao, S.~Osher, 
        \newblock{Incorporating topological derivatives into shape derivatives based level set methods}, 
        \newblock{J.~Comput.~Phys.} {\bf 225} (2007), 891--909.
    
        \bibitem{H12}F.~Hecht,
	    \newblock{New development in FreeFem++},
	    \newblock{J.~Numer.~Math.} {\bf 20} (2012), 251--266.

        \bibitem{HMP21} A.~Henrot, I.~Mazari, Y.~Privat,
        \newblock{Shape optimization of a Dirichlet type energy for semilinear elliptic partial differential equations}
        \newblock{ESAIM:~COCV} {\bf 27} (2021), S6.

        \bibitem{jog1996distributed} C.~Jog, \newblock{Distributed-parameter optimization and topology design for non-linear thermoelasticity}, \newblock{Comput.~Methods Appl.~Mech.~Engrg.} {\bf 132} (1996), 117--134.

         \bibitem{KS86} R.V.~Kohn, G.~Strang, 
        \newblock{Optimal design and relaxation of variational problems, I, II, III}, 
        \newblock{Comm.~Pure Appl.~Math.} {\bf 39} (1986), 113--137, 139--182, 353--377.

        \bibitem{korycki2006sensitivity} R.~Korycki, \newblock{Sensitivity analysis and shape optimization for transient heat conduction with radiation}, \newblock{Int. J. Heat Mass Transf.
} {\bf 49} (2006), 2033--2043.

     
        \bibitem{LV89} P.T.~Landsberg, A.~De~Vos,
        \newblock{The Stefan-Boltzmann constant in $n$-dimensional space},
        \newblock{J.~Phys.~A: Math.~Gen.} {\bf 22 } (1989), 1073--1084.

        \bibitem{LZ11} B.S.~Lazarov, O.~Sigmund,
        \newblock{Filters in topology optimization based on Helmholtz-type differential equations},
        \newblock{Int. J. Numer. Meth. Engng} {\bf 86} (2011), 765--781.

        \bibitem{Lions} J.-L. Lions, 
        \newblock{Optimal control of systems governed by partial differential equations}, 
        \newblock{Translated from the French by S. K. Mitter, Die Grundlehren der mathematischen Wissenschaften}, Band \textbf{170} Springer-Verlag, New York-Berlin 1971.

        \bibitem{liu2023adjoint} M.~Liu, Y.~Hasegawa, \newblock{Adjoint-based shape optimization for radiative transfer using level-set function and volume penalization method}, \newblock{Int. J. Heat Mass Transf.} {\bf 210} (2023), 124158.

        \bibitem{maute2004conceptual} K.~Maute, M.~Allen, \newblock{Conceptual design of aeroelastic structures by topology optimization}, \newblock{Struct. Multidiscip. Optim.} {\bf 27} (2004), 27--42.
        
	    \bibitem{Murat-Tartar} F. Murat, L. Tartar, H-convergence, Topics in the mathematical modelling of composite materials,  \textit{Progr. Nonlinear Differential Equations Appl.} \textbf{31} (1997), 21-43.
	    
	    \bibitem{MPP08} A.~M\"{u}nch, P.~Pedregal, F.~Periago,
	    \newblock{Relaxation of an optimal design problem for the heat equation},
	    \newblock{J.~Math.~Pures Appl.} {\bf 89} (2008), 225-247.

        \bibitem{OS88}S.~Osher, J.A.~Sethian,
        \newblock{Fronts propagating with curvature-dependent speed: Algorithms based on Hamilton-Jacobi formulations},
        \newblock{J.~Comput.~Phys.} {\bf 79} (1988), 12--49.

        \bibitem{O23} T.~Oka,
        \newblock{Level set method via positive parts for optimal design problems of two-material thermal conductors}, 
        \newblock{Math.~Comput.~Simul.}
        {\bf 223} (2024), 299--306.

        \bibitem{OY23} T.~Oka, T.~Yamada,
        \newblock{Topology optimization method with nonlinear diffusion}, 
        \newblock{Comput.~Methods Appl.~Mech.~Engrg.} {\bf 408} (2023), 115940.

        \bibitem{yoon2010topological} G.H.~Yoon, \newblock{Topological design of heat dissipating structure with forced convective heat transfer}, \newblock{J. Mech. Sci. Tech.} {\bf 24} (2010), 1225--1233.

        \bibitem{yuge1995optimization} K.~Yuge, N.~Kikuchi , \newblock{Optimization of a frame structure subjected to a plastic deformation}, \newblock{Structural optimization} {\bf 10} (1995), 197--208.

        \bibitem{ZHW18} S.~Zhu, X.~Hu, Q.~Wu,  \newblock{A level set method for shape optimization in semilinear elliptic problems},
        \newblock{J.~Comp.~Phys.} {\bf 355} (2018), 104--120.


\end{thebibliography}
\end{document}